\documentclass[reqno,10pt]{amsart}

\usepackage{amsmath}
\usepackage{amsfonts,amsthm,amssymb}
\usepackage{indentfirst}
\usepackage{graphicx}
\usepackage{graphics}
\usepackage{pict2e}
\usepackage{epic}
\numberwithin{equation}{section}
\usepackage[margin=1.7cm]{geometry}
\usepackage{epstopdf} 
\usepackage[colorlinks,linkcolor=blue]{hyperref}
\usepackage[capitalise,noabbrev]{cleveref}
\usepackage{todonotes}

\makeatletter
\def\thanks#1{\protected@xdef\@thanks{\@thanks
		\protect\footnotetext{#1}}}
\makeatother

\crefformat{equation}{(#2#1#3)}
\crefmultiformat{equation}{(#2#1#3)}{ and~(#2#1#3)}{, (#2#1#3)}{ and~(#2#1#3)}
\crefrangeformat{equation}{(#3#1#4) to~(#5#2#6)}

\setlength{\marginparwidth}{1.0cm}
\usepackage[normalem]{ulem}

\allowdisplaybreaks

\theoremstyle{plain}
\newtheorem{Thm}{Theorem}[section]
\newtheorem*{Thm*}{Theorem}
\newtheorem{Lem}[Thm]{Lemma}

\newtheorem{Prop}[Thm]{Proposition}

\theoremstyle{definition}

\newtheorem{Rem}[Thm]{Remark}
\newtheorem{?}[Thm]{Problem}

\newcommand{\p}{\partial}
\newcommand{\pt}{\partial_t}
\newcommand{\px}{\partial_1}
\newcommand{\R}{\mathbb{R}}
\newcommand{\T}{\theta}
\newcommand{\e}{\varepsilon}
\newcommand{\E}{\mathcal{E}}

\newcommand{\D}{\mathbf{D}}
\newcommand{\Do}{\mathbf{D}_0}
\newcommand{\Dn}{\mathbf{D}_{\neq}}
\newcommand{\Et}{\tilde{\mathcal{E}}}

\newcommand{\Eb}{\bar{\mathcal{E}}}
\newcommand{\Em}{\mathring{\mathcal{E}}}

\newcommand{\Tt}{\tilde{\theta}}
\newcommand{\Tb}{\bar{\theta}}
\newcommand{\etab}{\bar{\eta}}

\newcommand{\zetam}{\mathring{\zeta}}
\newcommand{\phim}{\mathring{\phi}}
\newcommand{\psim}{\mathring{\psi}}

\newcommand{\varphim}{\mathring{\varphi}}

\newcommand{\mr}{\mathring}

\newcommand{\uf}{\textbf{u}}

\newcommand{\Torus}{\mathbb{T}}

\newcommand{\dv}{\text{div}}
\newcommand{\lap}{\triangle}

\newcommand{\rhob}{\bar{\rho}}
\newcommand{\rhot}{\tilde{\rho}}

\newcommand{\rhom}{\mathring{\rho}}

\newcommand{\ub}{\bar{u}}

\newcommand{\ut}{\tilde{u}}

\newcommand{\um}{\mathring{u}}

\newcommand{\mb}{\bar{m}}
\newcommand{\mt}{\tilde{m}}

\newcommand{\mm}{\mathring{m}}

\newcommand{\ubb}{\mathbf{\bar{u}}}

\newcommand{\utb}{\mathbf{\tilde{u}}}

\newcommand{\mtb}{\mathbf{\tilde{m}}}

\newcommand{\thetat}{\tilde{\theta}}
\newcommand{\thetab}{\bar{\theta}}
\newcommand{\Thetab}{\bar{\Theta}}
\newcommand{\deltab}{\bar{\delta}}

\newcommand{\thetam}{\mathring{\theta}}

\newcommand{\m}{\textbf{m}}
\newcommand{\abs}[1]{\left\lvert#1\right\rvert}

\newcommand{\norm}[1]{\left\lVert#1\right\rVert}

\usepackage{color}

\usepackage{todonotes}

\begin{document}
	
	\title[Stability of planar entropy wave for 3-d Navier-Stokes in Eulerian coordinates]{Asymptotic stability of planar entropy wave for 3-d Navier-Stokes equations in Eulerian coordinates}

	\author[R.-J. Duan]{Renjun Duan}
	\address[R.-J. Duan]{Department of Mathematics, The Chinese University of Hong Kong, Shatin, Hong Kong, China.}
	\email{rjduan@math.cuhk.edu.hk}

	\author[F.~Huang]{Feimin Huang}
	\address[F.~Huang]{Academy of Mathematics and Systems Science, Chinese Academy of Sciences, Beijing 100190, China, and School of Mathematical Sciences, University of Chinese Academy of Sciences, Beijing 100049, China.}
	\email{fhuang@amt.ac.cn}

	\author[R.~Li]{Rui Li}
	\address[R.~Li]{Department of Mathematics, The Chinese University of Hong Kong, Shatin, Hong Kong, China.}
	\email{ruili001@cuhk.edu.hk}

	\author[L.~Xu]{Lingda Xu}
	\address[L.~Xu]{Department of Applied Mathematics, The Hong Kong Polytechnic University, Hong Kong, China.}
	\email{lingda.xu@polyu.edu.hk}

	\begin{abstract}
		We investigate the large-time asymptotic behavior toward the planar entropy wave for the three-dimensional Navier-Stokes equations in Eulerian coordinates, considering two types of initial perturbations---with and without the assumption that the integral of the initial perturbation is zero. Generic perturbations generate diffusion waves, and structural conditions fail for multi-dimensional Navier-Stokes equations in Eulerian coordinates. These two aspects have posed significant challenges and left the problem unresolved for years. On one hand, since \cite{LX}, the study of the entropy wave has been based on the left-right structural conditions. Without these structural conditions, the decay rates of lower-order terms become too slow to close the {\it a priori} assumption. On the other hand, the presence of diffusion waves yields problematic error terms in the perturbation system. In this work, we introduce a new transformation to ensure that both left-right structural conditions hold for the perturbation system. Additionally, using the fact that the derivative of the entropy wave maintains a fixed sign, we employ well-designed weighted energy estimates to control the slowly decaying terms. This enables us to establish asymptotic stability and derive the optimal decay rate. Furthermore, we address the case of initial perturbations with the zero mass condition and obtain the optimal decay rate by additionally developing a Poincar\'e type inequality and a key cancellation. 
	\end{abstract}

	\subjclass[2020]{Primary: 35Q30, 35B40; Secondary: 76N06, 76N30}
	

	\keywords{Navier-Stokes equations, Eulerian coordinates, planar entropy wave, stability, optimal decay rates}
	
	\maketitle
	
	\setcounter{tocdepth}{2}
	\tableofcontents
	\thispagestyle{empty}

	\section{Introduction}
	\subsection{The models and problem}
	We study the three-dimensional (3-d) full Navier-Stokes (N-S) equations in Eulerian coordinates, which read as
	\begin{equation}\label{NS}
		\begin{cases}
			\rho_{t}+\dv (\rho \uf)=0, \\
			(\rho \uf)_{t}+\dv(\rho \uf \otimes \uf)+\nabla p=\dv\,\mathbb{S},\\
			(\rho \E)_{t}+\dv(\rho \E \uf+p \uf)=\kappa \Delta \theta+\dv(\uf \mathbb{S}),\\
			(\rho,\uf,\theta)(x,0)=(\rho_0,\uf_0,\theta_0),
		\end{cases}
		\qquad  (x,t)\in\R^3\times \R_{+}.
	\end{equation}
	Here, the unknown functions $\rho$, $\uf=\left(u_{1}, u_{2}, u_{3}\right)^{t}$ and $\theta$, depending on the spatial variable $x=\left(x_1,x_2 ,x_3\right) \in \mathbb{R}^{3}$ and the time variable $t \geq 0$, stand for the density, velocity, and absolute temperature of the fluid, respectively. Moreover, $p$ denotes the pressure, $\E:=e+\frac{1}{2}|\uf|^{2}$ denotes the specific total energy with $e$ being the internal energy, and $\kappa>0$ denotes the heat-conductivity coefficient for the fluid. The viscous stress tensor $\mathbb{S}$ is given as 
	\begin{align*}
		\mathbb{S}=2\mu\mathbb{D}(\uf)+\lambda\dv\, \uf\,\mathbb{I},
	\end{align*}
	where $\mathbb{D}(\uf):=\frac{\nabla \uf+(\nabla \uf)^{t}}{2}\in\R^{3\times3}$ represents the deformation tensor, and $\mathbb{I}$ is the identity matrix. The shear and bulk viscosity coefficients $\mu$ and $\lambda$ of the fluid satisfy the physical constraints
	$$
	\mu>0, \quad 2 \mu+3 \lambda \geq 0.
	$$
	Actually, we only need the weaker constraints $\mu>0$ and $\lambda+\mu\geq0$ throughout the paper.
	
	In the present work, we consider the ideal polytropic gas, that is, the pressure $p$ and the inner energy $e$ can be determined by the following state equations
	\begin{align}\notag
		p=R\rho\theta=A\rho^\gamma \exp(\frac{\gamma-1}{R}S),\qquad\quad e= \frac{R}{\gamma-1}\theta,
	\end{align}
	where $S$ denotes the entropy, $A>0$, $R>0$ are fluid constants, and $\gamma>1$ is the adiabatic exponent. Without loss of generality, we assume $R=\gamma-1$.
	
	When the viscous effect vanishes, Navier-Stokes equations in \cref{NS} are reduced to the Euler system
	\begin{align}\label{Eu}
		\begin{cases}
			\rho_{t}+\dv (\rho \uf)=0, \\
			(\rho \uf)_{t}+\dv(\rho \uf \otimes \uf)+\nabla p=0,\qquad (x,t)\in\R^3\times \R_{+}, \\
			(\rho \E)_{t}+\dv(\rho E \uf+p \uf)=0,
		\end{cases}
	\end{align}
	which is a classical system of hyperbolic conservation laws. One of the main features of the Euler system is the formulation of singularities. That is, no matter how smooth the initial data is, the discontinuity of the solution may form. To see the discontinuity structure of solutions, we study the Riemann problem for which the solution usually consists of three important basic waves: rarefaction wave, shock wave, and contact discontinuity. Depending on the type of discontinuity, contact discontinuities can be classified into entropy waves and vortex sheets. In entropy waves, the entropy is discontinuous, whereas in vortex sheets, the discontinuity lies in the shear velocity. It is known that shock waves and rarefaction waves are generated in the generally nonlinear characteristic fields and the entropy wave is generated in the linear degenerate field.

	In the present paper, we are interested in studying stability of planar wave patterns. For the purpose, the domain under consideration is the infinite long flat torus $\Omega:=\R\times\Torus^2$ in space of three dimensions, where $\Torus:=\R/\mathbb{Z}$. We would emphasize that even in such simple multi-dimensional setting, there are several essential difficulties compared to the 1-d case, and the study of the whole space $\R^3$ will be left for the future. Moreover, we focus on the planar entropy waves given as the weak solutions of the 1-d Euler equations as follows:
	\begin{align}\label{eq-rmns}
		&\left\{\begin{array}{l}
			\pt\rho+\px m_1=0,\\
			\pt{m_1}+\px\left(({\gamma-1}){\E}+\frac{3-\gamma}{2}\frac{{m}_{1}^2}{\rho}\right)=0, \\
			\pt{\E}+\px\left(\frac{\gamma{m}_1 {\E}}{\rho}-\frac{(\gamma-1){m}_1^{3}}{2 \rho^{2}}\right)=0,
		\end{array}\right.
	\end{align}
	in terms of the unknowns $(\rho,m_1=\rho u_1,\E)$. Studies for entropy waves are somehow more subtle than those for other wave patterns because of the linear degeneracy of entropy waves. Accordingly, there are some structural conditions that need to be considered when studying entropy waves. The Jacobi matrix of the flux for \cref{eq-rmns} is
	\begin{align*}
		A\left(\rho, m_1, \E\right)=\left(\begin{array}{ccc}
			0 & 1 & 0 \\
			-\frac{3-\gamma}{2}\frac{m_1^2}{\rho^2} & (3-\gamma)\frac{m_1}{ \rho} & \gamma-1 \\
			-\frac{\gamma m_1 \E}{ \rho^2}+\frac{(\gamma-1)m_1^3}{ \rho^3} & \frac{\gamma \E}{ \rho}-\frac{3(\gamma-1) m_1^2}{2 \rho^2} & \frac{\gamma m_1}{ \rho}
		\end{array}\right).
	\end{align*}
	Regarding the second eigenvalue $\lambda_2=\frac{m_1}{\rho}$ of $A\left(\rho, m_1, \E\right)$, the corresponding left and right eigenvectors are
	\begin{align}\label{NSlr}
		r_2=\left(1,\frac{m_1}{\rho},\frac{m^2_1}{2\rho^2}\right)^t,\qquad\qquad l_2=\left(\frac{\gamma\E}{\rho}-\frac{3(\gamma-1)m_1^2}{2\rho^2},(\gamma-1)\frac{m_1}{\rho},-1\right).
	\end{align}
	From direct computations, \cref{NSlr} further gives that
	\begin{align}\label{NSlr.veri}
		\nabla l_2\cdot r_2\neq0,\qquad\nabla r_2\cdot r_2=0.
	\end{align}
	However, the classical left and right structural conditions as in \cite{LX} are given as follows:
	\begin{align}\label{SC}
		\nabla l_2\cdot r_2=0, \quad \text{and} \quad \nabla r_2\cdot r_2=0,\quad\text{ respectively. } 
	\end{align}
	Therefore, from \cref{NSlr.veri} the left structural condition is violated for the Navier-Stokes equations \cref{eq-rmns} in Eulerian
	coordinates.

	The two structural conditions in \cref{SC} were originally proposed by Liu-Xin \cite{LX} for establishing the meta-stability of contact discontinuities for general  conservation laws with artificial viscosity via the Green's function method. Subsequently, Xin-Zeng \cite{XZ} demonstrated that the same result holds even without the left structural condition. A critical distinction, however, arises when considering conservation laws with physical viscosity instead of artificial viscosity. In the artificial viscosity case, the contact wave constitutes an exact solution, whereas for physical viscosity—as in the Navier-Stokes equations—it serves only as an approximate solution. This is intrinsic to the nature of contact waves as diffusion waves, which are generally not exact solutions of hyperbolic-parabolic coupled systems. For the Navier-Stokes equations in Lagrangian coordinates, this challenge was overcome by Huang et al.~\cite{HMX, HXY} through an ingenious construction of the contact wave and several key observations. Their approach, however, relies on the left structural condition and does not extend to the Eulerian coordinate framework. Consequently, the stability of contact discontinuities for the Navier-Stokes equations in Eulerian coordinates remains a major open problem, as previously noted for the one-dimensional case in \cite{XZ}; the multi-dimensional case presents even greater difficulties. It is also noteworthy that Huang-Matsumura-Xin \cite{HMX} conjectured the optimal decay rate for the contact wave. The question recently has been resolved by Liu-Wang-Xu \cite{LWX} under both structural conditions for the 1-d Navier-Stokes equations in Lagrangian coordinates. This success underscores the significant and ongoing challenge of deriving the optimal decay rate of solutions to \cref{eq-rmns} in the absence of the left structural condition as in \cref{NSlr.veri}.
	
	\subsection{Literature review}
	
	For the 1-d case, there have been numerous results on the stability of waves. In 1960, Ole\v{\i}nik et al.~\cite{O} studied the stability of shock and rarefaction waves for a single conservation law. In 1985, Matsumura-Nishihara~\cite{MN} and Goodman~\cite{G} independently used the anti-derivative method to prove the stability of shock waves for systems, under the zero-mass condition. In 1986, Matsumura-Nishihara~\cite{MN2} proved the stability of rarefaction waves for the Navier-Stokes equations. In 2005, Huang-Matsumura-Xin~\cite{HMX} established the stability of entropy waves for the Navier-Stokes equations. This result was further extended to the Boltzmann equation without requiring the zero-mass condition by Huang-Xin-Yang~\cite{HXY} in 2008. 
	
	For solutions involving multiple wave patterns, Huang-Matsumura~\cite{HM} in 2009 constructed diffusion waves and proved the stability of composite waves containing two shock waves for the non-isentropic Navier-Stokes equations without imposing the zero-mass condition. In 2010, Huang-Li-Matsumura~\cite{HLM} proved the stability of composite waves consisting of two rarefaction waves and entropy waves via a new inequality. In 2021, Kang-Vasseur-Wang~\cite{KVW} studied the large-time behavior of composite waves consisting of a shock and a rarefaction wave in the isentropic Navier-Stokes equations by introducing a time-dependent shift.
	
	The above results are obtained via the energy method. On the other hand, the Green's function method, initiated in~\cite{Liu1,Liu3}, has proven powerful in studying the pointwise behavior of waves. Using a novel construction of approximate Green's functions, Liu-Zeng~\cite{LZ3} obtained pointwise estimates of shock waves in 2009. Their method applies to physical viscosity cases, such as the Navier-Stokes and MHD equations~\cite{LZ5}. In 2017, Liu-Wang~\cite{LW} obtained pointwise estimates of rarefaction waves for scalar conservation laws. For the derivation of nonlinear stability from linear and spectral stability, we refer to~\cite{Z}. Since this paper focuses on entropy waves, relevant results will be introduced in more detail later. We also note that many interesting results are not included here; see~\cite{SX}.
	
	Regarding entropy waves, for the Navier-Stokes equations in Lagrangian coordinates, both structural conditions hold. However, viscous terms introduce errors, which complicates the analysis. These errors cause the energy of anti-derivatives to grow in time. Huang et al.~\cite{HMX,HXY} found that this growth can be compensated by the decay of the energy of derivatives. They obtained a decay rate of $(1+t)^{-1/4}$ and conjectured an optimal rate of $(1+t)^{-1/2}$, based on the results of Liu-Xin and Xin-Zeng~\cite{LX,XZ}. This conjecture has been studied by several groups, including Huang-Wang-Wang~\cite{HWW} and Yang~\cite{Yang}, where the best known decay rate is $(1+t)^{-1/3}$. More recently,~\cite{LWX} revisited this conjecture for the Navier-Stokes equations in Lagrangian coordinates. The studies of entropy waves in the setting that both structural conditions hold are now quite complete.
	
	However, in the Eulerian coordinates for Navier-Stokes equations, the left structural condition fails, and consequently, very few results are available. In fact, to the best of our knowledge, there are no known results, especially without the zero-mass condition, proving the stability of entropy waves for the Navier-Stokes equations in Eulerian coordinates, even in the 1-d case. In this paper, we consider the multi-dimensional case and obtain the optimal decay rate $(1+t)^{-1/2}$ for the Eulerian coordinates. The presence of the diffusion wave plays a crucial role in achieving this optimal rate.
	
	For multi-dimensional planar waves, in 1990, Xin~\cite{Xin1990} studied planar rarefaction waves for a single conservation law. In 2018, Li-Wang-Wang~\cite{LW-1,LWW} proved the stability of planar rarefaction waves with physical viscosity in $\R\times \Torus$ and $\R \times \mathbb{T}^2$. Due to the fact that approximate rarefaction waves are not exact solutions for viscous conservation laws, the multi-dimensional perturbations cannot be localized. Instead, Huang-Xu-Yuan~\cite{HXY2022}  proved stability under periodic (non-local) perturbations. Recently, Huang-Xin-Xu-Yuan~\cite{HXXY} and Yuan-Zhao~\cite{YZ2025} proved the stability of the viscous vortex sheet in the 3-D isentropic Navier-Stokes equation under periodic (non-local) perturbations. 
	
	There are also many notable results on planar shock waves. Notably, Humpherys-Lyng-Zumbrun~\cite{HLZ2017} provided numerically well-conditioned and analytically justified computations verifying the nonlinear stability of planar viscous shocks for the Navier-Stokes equations. Freist\"uhler-Szmolyan~\cite{FS1,FS2} proved spectral stability for small-amplitude viscous shocks in systems with artificial viscosity. More recently, Yuan~\cite{Yuan} and Wang-Wang~\cite{WW2022} made significant progress on the rigorous proof of nonlinear stability for Navier-Stokes shock waves:~\cite{WW2022} applied the $a$-contraction method with a time-dependent shift, while~\cite{Yuan} introduced a framework involving multi-dimensional anti-derivatives. We also refer to Liu-Yu~\cite{LY}, where the propagation of planar shock waves is studied in a quantitative, pointwise manner, and more complex wave patterns, including Rayleigh-type waves, are observed.

	\subsection{Difficulties and strategy}
	
	To the best of our knowledge, there are no such stability results concerning the m-d viscous entropy wave. It should be pointed out that in the present understanding, the structural conditions are crucial for viscous entropy waves by the energy method and Green's function approach. Thus, the stability of the viscous entropy wave is interesting and completely different from the other two wave patterns.

	The stability of planar contact waves in multi-dimensional compressible Navier-Stokes equations involves several essential differences from the previous results as far as we know. In what follows, we introduce the difficulties and novelties with more details.
	\begin{flushleft}
		\textbf{Difficulties:}
	\end{flushleft}
	\begin{itemize}
		\item The limitation of structural conditions \cref{SC}.
		
		Although the derivative of the contact discontinuity maintains a fixed sign, $\px\rhob$ and $\px\Tb$ cannot share the same sign; see \cref{eq-diffusionwave,cd-eu,cd}. That is one of the main differences between entropy wave and other basic wave patterns such as rarefaction wave and shock wave. Fortunately, previous research works \cite{LX,HMX,HXY} showed, with the help of two sides of structural conditions, that the lower order terms of perturbation $(\phi,\varphi,\zeta):=(\rho-\rhob,u-\ub,\theta-\Tb)$ are 
		\begin{align*}
			\int_{\Omega}(1+t)^{-1}e^{-\frac{cx^2}{1+t}}\Big(\phi^2+\varphi^2+\zeta^2\Big)dx.
		\end{align*}
		In the 1-d case, \cite{HXY} found that although the energy of anti-derivatives increases with the rate $(1+t)^{\frac{1}{2}}$, the $L^{\infty}$-norm can be bounded since the energy of perturbation decays at the rate of $(1+t)^{-\frac{1}{2}}$. Thus, both of the two structural conditions are crucial. If the left structural condition fails, the lower order terms become
		\begin{align*}
			\int_{\Omega}(1+t)^{-\frac{1}{2}}e^{-\frac{cx^2}{1+t}}\Big(\phi^2+\varphi^2+\zeta^2\Big)dx.
		\end{align*}
		Then, arguments similar to those in \cite{HMX,HXY} can only yield uniform bounds for the first-order energy of anti-derivatives within the anti-derivative framework, which is not enough to close the a {\it priori} assumptions. Without the approaches of anti-derivatives, it is also hard to control such terms. Note that one can construct the entropy wave as an exact solution for conservation laws with artificial viscosity; thus there will not be growth of energy for anti-derivatives and this problem is somewhat simpler in this case.
		
		In sum, the stability of entropy wave for systems with physical viscosity and without left structural condition is very difficult and essentially different from other cases introduced previously.

		\item The optimal decay rate.
		
		Note that the structural conditions in \cref{SC} are at the level of the first order of anti-derivatives, which helps one to obtain a good estimate for zero-order energy. In the higher-order estimates, it fails and terms with low decay rates arise. In \cite{HMX}, the authors conjectured that the optimal rate should be $(1+t)^{-\frac{1}{2}}$ due to the presence of diffusion wave. Recently, \cite{LWX} solved this problem in Lagrange coordinates, but not for Eulerian coordinates. As explained previously, the structural conditions are crucial in computing the decay rates. Thus, our cases are more subtle.

	\end{itemize}
	Based on these difficulties, we solve the problems by the following strategies.
	
	\begin{flushleft}
		\textbf{Strategies:}
	\end{flushleft}
	\begin{itemize}
		\item The decomposition of zero and non-zero mode in Fourier space. 
		
		The perturbations can be decomposed into
		\begin{align*}
			(\phim,\psim,\mr{w})=\int_{\Torus^2}(\rho-\rhot,\m-\mtb,\E-\Et)(x,t)dx_2dx_3,\qquad (\phi_{\neq},\psi_{\neq},w_{\neq})=(\phi,\psi,w)-(\phim,\psim,\mr{w}),
		\end{align*}
		where $(\rhot,\mt,\Et)$ is  defined as the coupling of entropy waves and diffusion waves in \cref{tilde}; more details can be found in \cref{section-2-t}. By this spirit, we can decompose the perturbation system into two coupled parts. The key observation is that the non-zero mode of perturbations decays exponentially.

		\item The integrated system and a new transformation.
		
		We study the perturbation in an integrated system with
		\begin{align*}
			(\Phi,\Psi,W)=\int_{-\infty}^{x_1}\int_{\Torus^2}(\rho-\rhot,\m-\mtb,\E-\Et)(y,t)dy'dy_1.
		\end{align*}
		Next, we will apply the following transformation
		\begin{align*}
			&\tilde{\Phi}=\Tt\Phi,\qquad\tilde{\Psi}={\Psi}- \mathbf{\ut} \Phi,\\
			&\tilde{W}={W}-\mathbf{{\utb}} \cdot \tilde{\Psi}-\left({\thetat}+\frac{| \mathbf{\ut}|^{2}}{2}\right) \Phi,
		\end{align*}
		where $\tilde{\uf}$ and $\thetat$ are the non-conserved quantities of velocity and temperature under the new ansatz \cref{non-conserved quantities-111}, respectively. 
		Through this transformation, the new system is simplified a lot and satisfies both the two structural conditions, see \cref{eq-ever}.
		
		\item Estimates on the non-zero mode.

		We observe that the Poincar\'{e} inequality holds for non-zero modes. Thus the exponential decay rate for them is expected.

		\item The optimal decay rate.
		
		Note that although the constructed entropy wave \cref{eq-diffusionwave} is monotonic, the sign of derivatives of $\Tb$ and $\rhob$ are opposite, see \cref{cd}. Thus, one cannot use arguments similar to those for shock waves or rarefaction waves that essentially rely on the sign of derivatives of profiles. In the current case, we perform energy estimates using the diagonalized system to fully capture the intrinsic feature and determine the dissipation structure as specified in \cref{sec-n-3-tt,ds}. In particular, we obtain the dissipation term
		$$
		G_k:=C\delta^{-\frac12}\int_{\R} \px\rhob\abs{\px^kb_1}^2+ \px\rhob\abs{\px^kb_3}^2dx_1,
		$$
		see  \cref{sec-n-3-tt}.
		It is also worth noting that, since $\p^{k+1} \rhob$ cannot be directly controlled by $(1+t)^{-\frac{k}{2}}\px \rhob$ in terms of \cref{2025-10-6}, we further develop weighted estimates of the form $\int_{\R}\tilde{\omega}_{-\frac12}|f|^2dx_1$ as in \cref{2025-10-4,2025-10-5} based on the obtained dissipation term $G_k$, where $\tilde{\omega}_{-\frac12}$ is given in \cref{errors-1} with $\tilde{c}_0 <c_0$ and $c_0$ given in \cref{2025-10-6}.
		Then the terms with lower decay rate can be controlled. 
		
		\item On the case of zero initial mass.

		There are several key cancellations we need to achieve in this case. For instance, \cref{key} shows that the slow decay terms will not appear in the 2-characteristic field. And we need a Poincar\'e type inequality \cref{lem5} to deal with the critical terms.
		
	\end{itemize}

	We believe that the transformation and the weighted energy method proposed here, rooted in the structural properties of entropy waves, have the potential of applications to a variety of important systems, including the Navier-Stokes-Poisson and magnetohydrodynamics equations.

	The rest of this paper is organized as follows. In Section 2, we will construct the ansatz and then introduce the main results. Then we will introduce the perturbation system, the key transformations,  energy estimate for zero mode and   energy estimate for non-zero mode for the case without initial zero mass in Section 3. Finally, in Section 4, we obtain the optimal decay rate for the case with initial zero mass.
	
	\section{Construction of ansatz and main theorems}\label{section-2-t}
	\subsection{Planar viscous entropy wave for 3-d Navier-Stokes equations}

	In this subsection, we give the explicit expression of planar entropy wave to the Navier-Stokes equations \cref{NS}. We start from the 1-d Euler system \cref{NS} along the $x_1$-direction: 
	\begin{align*}
		\begin{cases}
			\p_{t}\rho+\px\left(\rho u_{1}\right)=0, &  \\
			\p_{t}\left(\rho u_{1}\right)+\px\left(\rho u_{1}^{2}+p\right)=0, & x_1  \in \mathbb{R}, t>0,\\
			\p_{t}(\rho E)+\px\left(\rho E u_{1}+p u_{1}\right)=0, &
		\end{cases}
	\end{align*}
	with the initial data:
	\begin{align*}
		\left(\rho, u_{1}, u_2,u_3,\T\right)\left(x_1,0 \right)
		=\left\{\begin{aligned}
			\left(\rho_{-}, 0, 0, 0, \theta_{-}\right), &\quad x_1 <0, \\
			\left(\rho_{+}, 0, 0, 0, \theta_{+}\right), &\quad x_1 >0.
		\end{aligned}\right.
	\end{align*}
	In this paper, we study the weak entropy wave, that is, the strength of wave ${\delta}:=\abs{\rho_+-\rho_-}+\abs{\T_+-\T_-}$ is small.
	It is known that entropy wave is generated in the linear degenerate characteristic field
	\begin{align}\label{cd-eu}
		R{\rho_+}\T_+=p_+=p_-= R{\rho_{-}}{\T_{-}}.
	\end{align} 
	In the case of Navier-Stokes equations, we will modify the entropy wave as a smooth profile, called the viscous entropy wave.
	
	We start from the construction of diffusion waves:
	\begin{align}\label{eq-diffusionwave}
		\begin{cases}
			\p_t\bar{\rho}=\px\left(a\frac{ \px\bar{\rho}}{\bar{\rho}}\right),\quad\quad\quad a:=\frac{\kappa  }{\gamma },\\[3mm]
			\bar{\rho}(-\infty,t)=\rho_-,\quad \bar{\rho}(+\infty,t)=\rho_+.
		\end{cases}
	\end{align}
	From  \cite{HL,LX}, we know that \cref{eq-diffusionwave} admits a unique self-similar solution $\bar{\rho}(\frac{x_1}{\sqrt{t+1}})$, satisfying
	\begin{align}\label{2025-10-6}
		(1+t)^{\frac{1}{2}}\p_{1}\bar{\rho}=C\delta e^{-\frac{c_0x_1^2}{1+t}}.
	\end{align}
	
	In terms of $\bar{\rho}$, we can construct an approximation solution for 3-d Navier-Stokes equations. Indeed, setting
	\begin{align}\label{cd}
		&\bar{\rho}=\bar{\rho}, \quad
		\bar{\uf}=(-a\frac{\px\bar{\rho}}{|\bar{\rho}|^{2}},0,0),\nonumber\\
		&\Tb=\frac{p_+}{ R\bar{\rho}}-\frac{\bar{u}_{1}^{2}}{2} , \quad
		R \bar{\rho} \Tb=\bar{p}=p_{+}-\frac{(\gamma-1)}{2} \bar{\rho} |\bar{u}_{1}|^{2},
	\end{align}
	it holds that
	\begin{align}\label{eq-ecd}
		\left\{\begin{array}{l}
			\pt\bar{\rho}+\px(\bar{\rho} \bar{u}_1)=0, \\[2mm]
			\pt(\bar{\rho} \bar{u}_1)+\px\left(\bar{\rho}  |\bar{u}_1|^{2}+\bar{p}\right)=(2\mu+\lambda)\px^2 \bar{u}_{1}+\px{Q}_{1} ,\\[2mm]
			\pt\left(\bar{\rho}\left( \Tb +\frac{|\bar{\uf}|^{2}}{2}\right)\right)+\px\left(\bar{\rho} \bar{u}_1\left( \Tb+\frac{ |\bar{\uf}|^{2}}{2}\right)+\bar{p} \bar{u}_1\right)\\[3mm]
			\qquad\qquad\quad\qquad\qquad\quad=\kappa \px^2\Tb+(2\mu+\lambda)\px \left(\bar{u}_1 \px\bar{u}_{1}\right)+\px{Q}_{2},
		\end{array}\right.
	\end{align}
	where 
	\begin{equation}\notag
		\left\{
		\begin{aligned}
			&Q_{1}=-a\frac{\pt\bar{\rho}}{\bar{\rho}}+\frac{(3-\gamma)}{2}\bar{\rho}(\bar{u}_{1})^2-(2\mu+\lambda)\p_1\bar{u}_1
			\le C\delta (1+t)^{-1}e^{-\frac{cx_1^2}{1+t}},\\
			&Q_2=-\frac{(\gamma-1)}{2}\bar{\rho}(\bar{u}_{1})^3+\frac{\p_1(|\bar{u}_1|^2)}{2}-(\lambda+2\mu)(\bar{u}_{1}\px \bar{u}_{1}) \le C\delta (1+t)^{-\frac{3}{2}}e^{-\frac{cx_1^2}{1+t}},
		\end{aligned}
		\right.
	\end{equation}
	with some constant $c>0$. For convenience, we denote the conserved quantities by
	\begin{align*}
		U=(\rho,m_1,m_2,m_3,\E)^{t},\quad\;\bar{U}=(\rhob,\mb_1, \mb_2,\mb_3,\bar{\E})^{t},\quad\; U^{\#}=(\rho,m_1,\E)^{t}, \quad\;\bar{U}^{\#}=(\rhob,\mb_1, \bar{\E})^{t}.  
	\end{align*}

	\subsection{The extra initial mass and the diffusion waves}
	
	We will construct diffusion waves to carry the extra initial mass, which is inspired by \cite{Liu1,SX,HXY}, to solve this problem. Note that the extra initial mass is distributed along the $x_1$-direction. The Jacobi matrices for the flux of 1-d Navier-Stokes equations in Eulerian coordinates \cref{eq-rmns} at $(\rho_+,m_{1+},\E_+)$ and $(\rho_-,m_{1-},\E_-)$ are given, respectively, as 
	\begin{align*}
		A_\pm=\left(\begin{array}{ccc}
			0&1&0\\
			0&0&\gamma-1\\
			0&\gamma\frac{\E_\pm}{\rho_\pm}&0
		\end{array}\right).
	\end{align*}
	Then it is direct to compute that $\lambda_{1}^-=-\sqrt{\gamma (\gamma-1)\frac{\E_-}{\rho_-}}$ is the first eigenvalue of $A_-$ corresponding with $r_1^{-}=(1,\lambda_1^-,\frac{(\lambda_1^{-})^2}{\gamma-1})^t$ and $\lambda_{3}^+=\sqrt{\gamma (\gamma-1)\frac{\E_+}{\rho_+}}$ is the third eigenvalue of $A_+$ corresponding with $r_3^{+}=(1,\lambda_3^+,\frac{(\lambda_3^{+})^2}{\gamma-1})^t$. Since the following three vectors, $r_1^-$, $(\rho_--\rho_+,m_{1-}-m_{1+},\E_--\E_+)$ and $r_3^+$ are linearly independent, we have
	\begin{align}\label{eq-initial mass}
		\int_{\Omega}U^{\#}-\bar{U}^{\#}dx=\bar{\Theta}_1r_1^-+\bar{\Theta}_2(\rho_--\rho_+,m_{1-}-m_{1+},\E_--\E_+)^t+\bar{\Theta}_3r_3^+.
	\end{align}
	We will couple two diffusion waves with $(\rhob,\ub,\thetab)$, which are defined by
	\begin{align*}
		\Theta_1(x_1,t)=\frac{1}{\sqrt{4\pi(1+t)}}e^{-\frac{(x_1-\lambda_1^-(1+t))^2}{4(1+t)}},\qquad\Theta_3(x_1,t)=\frac{1}{\sqrt{4\pi(1+t)}}e^{-\frac{(x_1-\lambda_3^+(1+t))^2}{4(1+t)}}.
	\end{align*}
	It is direct to verity that they satisfy
	\begin{align*}
		\left\{\begin{aligned}
			&\pt\Theta_{1}+\lambda_1^{-}\px\Theta_{1}=\px^2\Theta_{1},\\
			& \int_{-\infty}^{+\infty}\Theta_1(x_1,t)dx=1,
		\end{aligned}\right.\qquad\left\{\begin{aligned}
			&\pt\Theta_{3}+\lambda_3^{+}\px\Theta_{3}=\px^2\Theta_{3},\\
			&\int_{-\infty}^{+\infty}\Theta_3(x_1,t)dx=1.
		\end{aligned}\right.
	\end{align*}
	Let
	\begin{align*}
		\bar{\Theta}_{i+2}:=\int_{\Omega}m_i(x,0)dx,\qquad i=2,3.
	\end{align*}
	Then we define the new ansatz $(\tilde{\rho},\tilde{m},\tilde{\E})$ by
	\begin{align}\label{tilde}
		\begin{aligned}
			&\tilde{\rho}(x_1, t)={\bar{\rho}}\left(x_1+\bar{\Theta}_2, t\right)+\bar{\Theta}_{1} \Theta_{1}+\bar{\Theta}_{3} \Theta_{3}, \\
			&\tilde{m}_{1}(x_1, t)={\bar{m}}_{1}\left(x_1+\bar{\Theta}_2, t\right)+\lambda_{1}^{-} \bar{\Theta}_{1} \Theta_{1}+\lambda_{3}^{+} \bar{\Theta}_{3} \Theta_{3}, \\
			&\tilde{m}_{i}=\frac{\bar{\Theta}_{i+2}}{\sqrt{4 \pi(1+t)}} e^{-\frac{x_1^{2}}{4(1+t)}}, \quad i=2,3, \\
			&\tilde{\E}(x_1, t)=\bar{\E}\left(x_1+\bar{\Theta}_2, t\right)+\left(\frac{(\lambda_1^-)^2}{\gamma-1}\bar{\Theta}_{1} \Theta_{1}+\frac{(\lambda_3^+)^2}{\gamma-1}\bar{\Theta}_{3} \Theta_{3}\right).
		\end{aligned}
	\end{align}
	Without loss of generality, we assume $\bar{\Theta}_2=0$. Then the initial extra mass for new ansatz \cref{tilde} is
	\begin{align*}
		\begin{aligned}
			&\int_{\Omega}\left[U(x,0)-\tilde{U}(x_1,0)\right]dx=\int_{\Omega}\left[U^{\#}(x,0)-\bar{U}^{\#}(x_1,0)\right]dx+\sum_{i=2}^3\bar{\Theta}_{i+2}+\int_{\R}\left(\bar{U}-\tilde{U}\right)(x_1,0)dx_1=0,
		\end{aligned}
	\end{align*}
	where $\tilde{U}=(\rhot,\mt_1,\mt_2,\mt_3,\Et)$. 
	For $i=1,2,3$, we also define non-conserved quantities: velocity  $\ut_i$, temperature $\thetat$ and pressure $\tilde{p}$ as follows
	\begin{align}\label{non-conserved quantities-111}
		\ut_i:=\frac{\mt_i}{\rhot},\qquad \qquad\thetat:=\frac{\Et}{\rhot}-\frac{\abs{\mt}^2}{2\rhot^2},\qquad\qquad \tilde{p}:=(\gamma-1)\Et-\frac{(\gamma-1)\abs{\mt}^2}{2\rhot}.
	\end{align}
	By a direct calculation, we find that the new ansatz \cref{tilde} satisfies the following approximate Navier-Stokes equations
	\begin{equation}\label{NS-ansatz}
		\begin{cases}
			\p_t\tilde{\rho}+\dv \, \mt={\px\tilde{H}_1},\\[2mm]
			\p_t\tilde{m}_{1}+\dv ( \frac{\tilde{m}_1 \tilde{\mathbf{m}}}{\tilde{\rho}})+ \p_{1}\tilde{p}=\mu\triangle\tilde{u}_1+(\mu+\lambda)\px(\dv \tilde{\mathbf{u}})+\dv\tilde{\mathbf{H}}_{21},\\[2mm]
			\p_t\tilde{m}_{2}+\dv ( \frac{\tilde{m}_2 \tilde{\mathbf{m}}}{\tilde{\rho}})+ \p_2\tilde{p}=\mu\triangle\tilde{u}_2+(\mu+\lambda)\p_2(\dv \tilde{\mathbf{u}})+\dv\tilde{\mathbf{H}}_{22},\\[2mm]
			\p_t\tilde{m}_{3}+\dv ( \frac{\tilde{m}_3 \tilde{\mathbf{m}}}{\tilde{\rho}})+ \p_3\tilde{p}=\mu\triangle\tilde{u}_3+(\mu+\lambda)\p_3(\dv \tilde{\mathbf{u}})+\dv\tilde{\mathbf{H}}_{23},\\[2mm]
			\p_t\tilde{\mathcal{E}}+\dv (\tilde{\mathbf{u}}(\tilde{\mathcal{E}}+\tilde{p}))=\dv (\tilde{\mathbf{u}}\tilde{S}+\kappa\nabla\tilde{\theta})+\dv\tilde{\mathbf{H}}_{3},\\[2mm]
		\end{cases}
	\end{equation}
	where
	\begin{align*}
		&\tilde{H}_1=\Thetab_1\px\Theta_{1}+\Thetab_3\px\Theta_{3},\\
		&{\tilde{\mathbf H}}_{21}:=\lambda_1^-\Thetab_1\px\Theta_{1}\mathbb{I}_1+\lambda_3^+\Thetab_3\px\Theta_{3}\mathbb{I}_1-(\lambda+\mu)\dv\left(\frac{\mtb}{\rhot}-\frac{\bar{\mathbf{m}}}{\bar{\rho}}\right)\mathbb{I}_1+Q_1\mathbb{I}_1\\
		&\qquad+ \dv\left(\frac{\mt_1\mtb}{\rhot}-\frac{(\gamma-1)|\mtb|^2}{2\rhot}-\frac{\bar{m}_1\bar{\mathbf{m}}}{\bar{\rho}}+\frac{(\gamma-1)|\bar{\mathbf{m}}|^2}{2\bar{\rho}}\right)\mathbb{I}_1-\mu\px\left(\frac{\mt_1}{\rhot}-\frac{\bar{m}_1}{\bar{\rho}}\right)\mathbb{I}_1,\nonumber\\
		&{\tilde{\mathbf H}}_{2i}:=\Thetab_{i+2}\px\Theta_{i+2}\mathbb{I}_1-(\lambda+\mu)\dv\left(\frac{\mtb}{\rhot}-\frac{\bar{\mathbf{m}}}{\bar{\rho}}\right)\mathbb{I}_1\\
		&\qquad+ \px\left(\frac{\mt_i\mt_1}{\rhot}-\frac{(\gamma-1)|\mtb|^2}{2\rhot}-\frac{\bar{m}_i\bar{m}_1}{\bar{\rho}}+\frac{(\gamma-1)|\bar{\mathbf{m}}|^2}{2\bar{\rho}}\right)\mathbb{I}_1-\mu\px\left(\frac{\mt_i}{\rhot}-\frac{\bar{m}_i}{\bar{\rho}}\right)\mathbb{I}_1,\quad i=2,3,\nonumber\\
		&\tilde{\mathbf{H}}_3:=\frac{(\lambda_1^-)^2}{\gamma-1}\Thetab_1\px\Theta_{1}\mathbb{I}_1+\frac{(\lambda_3^+)^2}{\gamma-1}\Thetab_3\px\Theta_{3}\mathbb{I}_1-(\tilde{\mathbf{u}}\tilde{S}-\bar{\mathbf{u}}\bar{S})+Q_2\mathbb{I}_1\\
		&\qquad+(\gamma-1)\left(\frac{|\mtb|^{2}\mtb}{2 \rhot^{2}}-\frac{ |\bar{\mathbf{m}}|^{2}\bar{\mathbf{m}}}{2 \bar{\rho}^{2}}\right)-\kappa\nabla \left(\frac{\Et}{\rhot}-\frac{1}{2}\left|\frac{\mtb}{\rhot}\right|^2-\frac{\bar{\E}}{\bar{\rho}}+\frac{1}{2}\left|\frac{\bar{\mathbf{m}}}{\bar{\rho}}\right|^2\right)\notag\\
		&\qquad +\gamma \left( \frac{\mt_1\tilde{\E}}{\rhot}-\frac{\bar{m}_1 \bar{\E}}{\rhob} -\lambda_1^{-} \frac{\E_{-}}{\rho_{-}}\bar{\Theta}_3\Theta_1 -\lambda_3^{+}\frac{\E_{+}}{\rho_{+}}\bar{\Theta}_3\Theta_3    \right)\mathbb{I}_1\nonumber,
	\end{align*}
	and $\mathbb{I}_1:=(1,0,0)^t$.
	For $i=1,2,3$, simple calculations yield
	\begin{align}\label{tildeE}
		\big|\tilde{H}_{i}\big|+\abs{\tilde{\mathbf{H}}_{2i}}+\abs{\tilde{\mathbf{H}}_3}\leq& C\left(\delta+\sum_{j=1}^{5}\left|\bar{\Theta}_{j}\right|\right) \frac{1}{1+t}\left(e^{-\frac{c x_1^{2}}{1+t}}+e^{-\frac{c\left(x_1-\lambda_{1}^{-}(1+t)\right)^{2}}{1+t}}+e^{-\frac{c\left(x_1-\lambda_{3}^{+}(1+t)\right)^{2}}{1+t}}\right),
	\end{align}
	where $c>0$ is a constant independent of any small parameters in this paper. 
	
	Recalling the definition of $c_0$ in \cref{2025-10-6}, we use the following notation for convenience
	\begin{align}
		&D_{-\alpha}(x_1,t):=\frac{1}{(1+t)^{\alpha}}\left(e^{-\frac{c x_1^{2}}{1+t}}+e^{-\frac{c\left(x_1-\lambda_{1}^{-}(1+t)\right)^{2}}{1+t}}+e^{-\frac{c\left(x_1-\lambda_{3}^{+}(1+t)\right)^{2}}{1+t}}\right),\qquad\quad
		\omega_{-\alpha}(x_1,t):={(1+t)^{-\alpha}}e^{-\frac{c x_1^{2}}{1+t}},\label{errors}\\
		&\tilde{\omega}_{-\frac12}(x_1,t):=(1+t)^{-\frac12}e^{-\frac{\tilde{c}_0x_1^2}{1+t}}, \quad \text{for} \quad \tilde{c}_0 <c_0.\label{errors-1}
	\end{align}

	\subsection{Main results}
	
	We define the perturbation as
	\begin{align*}
		(\phi,\varphi,\zeta,\psi,w)(x,t):=(\rho-\rhot,\uf-\tilde{\uf},\theta-\Tt,\m-{\mtb},\mathcal{E}-\Et),\qquad (\phi_0,\varphi_0,\zeta_0,\psi_0,w_0):=(\phi,\varphi,\zeta,\psi,w)(x,0),
	\end{align*}
	and further the anti-derivatives for conserved quantities as
	\begin{align*}
		(\Phi,\Psi,W):=\int_{-\infty}^{x_1}\int_{\Torus^2}(\rho-\rhot,\m-\mt,\E-\Et)(y,t)dy'dy_1,\qquad(\Phi_0,\Psi_0,W_0):=(\Phi,\Psi,W)(x_1,0).
	\end{align*}
	Then we state the main results of this paper as follows.
	
	\begin{Thm}[Non-zero initial mass]\label{mt}
		Let $\Omega=\R\times\Torus^2$. Assume that $\left(\rho_{+}, u_{+}, \theta_{+}\right)$ and $\left(\rho_{-}, u_{-}, \theta_{-}\right)$ are the constant states of entropy wave satisfying \cref{cd-eu}, and $(\bar{\rho}, \bar{u}, \bar{\theta})\left( x_{1},t\right)$ is the planar viscous entropy wave connecting these two states as defined in \cref{eq-diffusionwave,cd}. Then there exist positive small constants $\varepsilon_{0}$, $\delta$ such that if
		$$
		{\delta}:=\abs{\rho_+-\rho_-}+\abs{\T_+-\T_-},\qquad\quad{{\|({\Phi}_0,{\Psi}_0,{W}_0)\|_{ H^3(\R)}^2+\norm{(\phi_0,\varphi_0,\zeta_0)}_{H^3(\Omega)}^2+\norm{(\phi_0,\psi_0,w_0)}_{L^1(\Omega)}^2:=\varepsilon^2 \leq \varepsilon^2_{0}}},
		$$
		then the Cauchy problem \cref{NS} admits a unique global smooth solution $(\rho, u, \theta)$ satisfying
		$$
		(\rho-\rhob, \uf-\bar{\uf}, \theta-{\thetab}) \in C\left(0,+\infty ; H^{3}(\Omega)\right), \nabla(\uf-\bar{\uf}, \theta-\Tb) \in L^{2}\left(0,+\infty ; H^3(\Omega)\right), \nabla(\rho-\rhob)\in L^{2}\left(0,+\infty ; H^2(\Omega)\right),
		$$
		and further
		\begin{align}\label{mt-decay}
			\sup _{x \in \Omega
			}\left|(\rho, \uf, \theta)(x, t)-(\bar{\rho}, \bar{\uf}, \bar{\theta})\left(x_{1},t\right)\right|\le C(\varepsilon_0^{\frac12}+\delta^{\frac12})(1+t)^{-\frac{1}{2}}.
		\end{align}
		Moreover, the non-zero modes of the unique solution have an exponential decay rate
		\begin{align*}
			\sup _{x \in \Omega
			}\left|\D_{\neq}\left[(\rho, \uf, \theta)(x,t)-(\bar{\rho}, \bar{\uf}, \bar{\theta})(x_1,t)\right]\right|\le C\varepsilon_0e^{-c t},
		\end{align*}
		where $C,c$ are  positive constants independent of $\varepsilon_0$ and $\delta$, and $\D_{\neq}$ is defined in \eqref{def-decom}.
	\end{Thm}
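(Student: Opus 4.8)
The plan is to realize the global solution of \cref{NS} as a nonlinear perturbation of the coupled ansatz $(\rhot,\mtb,\Et)$ of \cref{tilde}, to split the perturbation into its average over $\Torus^2$ (the zero mode) and the remaining mean-zero part (the non-zero mode), and to run a weighted energy estimate on the integrated zero-mode system after the change of unknowns $(\Phi,\Psi,W)\mapsto(\tilde\Phi,\tilde\Psi,\tilde W)$ that restores both structural conditions in \cref{SC}; local well-posedness in $H^3(\Omega)$ together with a continuity argument then promotes the a priori bounds to a global solution. \textbf{Step 1.} Subtract \cref{NS-ansatz} from \cref{NS} to obtain the perturbation system for $(\phi,\psi,w)=(\rho-\rhot,\m-\mtb,\E-\Et)$, whose forcing from the imperfect ansatz is, by \cref{tildeE}, of size $O\big((\delta+\sum_j|\Thetab_j|)D_{-1}\big)$ in the notation of \cref{errors}; integrating once in $x_1$ and once over $\Torus^2$ produces the one-dimensional integrated system for $(\Phi,\Psi,W)$ with $L^1_{x_1}$-source of order $D_{-1/2}$, while the splitting $(\phi,\psi,w)=(\phim,\psim,\mr w)+(\phi_{\neq},\psi_{\neq},w_{\neq})$ with $(\phim,\psim,\mr w)=\int_{\Torus^2}(\phi,\psi,w)\,dx'$ reduces everything to a one-dimensional zero-mode system coupled to a mean-zero non-zero-mode system on $\Omega$. \textbf{Step 2.} Apply $\tilde\Phi=\Tt\Phi$, $\tilde\Psi=\Psi-\utb\Phi$, $\tilde W=W-\utb\cdot\tilde\Psi-(\thetat+\tfrac12|\utb|^2)\Phi$ to the integrated zero-mode system; the transformed system then enjoys both structural conditions in \cref{SC}, so — in contrast with \cref{NSlr.veri} for the untransformed equations — the coupling of the linearly degenerate field with the genuinely nonlinear ones is reduced to a controllable order. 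Diagonalizing the convective part in Riemann-type variables $b=(b_1,b_2,b_3)$, the acoustic components $b_1,b_3$ carry the dissipation, quantified by the functional $G_k:=C\delta^{-\frac12}\int_{\R}\px\rhob\,(|\px^kb_1|^2+|\px^kb_3|^2)\,dx_1$, whose positivity rests on the fixed sign of $\px\rhob$.

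\textbf{Step 3.} For $k=0,1,2,3$, pair the $\px^k$-differentiated transformed zero-mode equations with $\px^k(\tilde\Phi,\tilde\Psi,\tilde W)$ (equivalently with $\px^kb$) and integrate over $\R$. Besides the benign dissipation $\int_{\R}|\px^{k+1}(\tilde\Psi,\tilde W)|^2\,dx_1$ and $G_k$, two families of dangerous terms remain: those proportional to $\px\rhob$ with the ``wrong'' sign, arising because $\px\rhob$ and $\px\Tb$ carry opposite signs in \cref{cd}, which one absorbs into $G_k$ using that $\px\rhob$ keeps a fixed sign on $\R$; and terms decaying only like $(1+t)^{-1/2}$. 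For the latter, since \cref{2025-10-6} forbids dominating $\px^{k+1}\rhob$ pointwise by $(1+t)^{-k/2}\px\rhob$, one complements the estimate with a weighted one for $\int_{\R}\tilde{\omega}_{-\frac12}|\px^k f|^2\,dx_1$, with $\tilde{\omega}_{-\frac12}$ as in \cref{errors-1}: since $\tilde c_0<c_0$ the Gaussian of $\tilde{\omega}_{-\frac12}$ is wider than that of $\px\rhob$, so $G_k$ dominates the weighted quadratic bulk term, while $\pt\tilde{\omega}_{-\frac12}$ supplies the extra factor $(1+t)^{-1}$ needed to beat the slow terms. Summing over $k$, using Cauchy--Schwarz and the a priori smallness of $\varepsilon_0+\delta$, one closes the zero-mode estimate $\|(\tilde\Phi,\tilde\Psi,\tilde W)(t)\|_{H^3(\R)}^2+\int_0^t\sum_{k\le3}\big(G_k+\|\px(\tilde\Psi,\tilde W)(s)\|_{H^3(\R)}^2\big)\,ds\lesssim\varepsilon_0^2+\delta$.

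\textbf{Step 4.} The mean-zero part $(\phi_{\neq},\psi_{\neq},w_{\neq})$ obeys a Poincar\'e inequality on $\Torus^2$, so a direct $H^3(\Omega)$ energy estimate yields a spectral gap, hence $\|(\phi_{\neq},\psi_{\neq},w_{\neq})(t)\|_{H^3(\Omega)}\lesssim e^{-ct}$, the nonlinear and coupling terms being harmless by the a priori assumption since they involve the already-bounded zero mode and the exponentially small non-zero mode. Combining Steps 1--4, choosing $\varepsilon_0,\delta$ small, and invoking local well-posedness with a continuity argument gives the global solution with the stated regularity. For the decay, the zero-mode bound controls $\|(\phim,\psim,\mr w)(t)\|_{H^3(\R)}$, and interpolating it against the $L^1_{x_1}$-decay of the $D_{-\alpha}$-type sources yields $\|(\phim,\psim,\mr w)(t)\|_{L^\infty(\R)}\lesssim(\varepsilon_0^{1/2}+\delta^{1/2})(1+t)^{-1/2}$; together with the exponential bound on the non-zero mode and, via \cref{2025-10-6} and \cref{tilde}, the explicit $(1+t)^{-1/2}$-decay of the diffusion-wave part $\rhot-\rhob$, $\mtb-\mbb$, $\Et-\bar{\E}$, this gives \cref{mt-decay} and the claimed exponential estimate on $\D_{\neq}$.

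\textbf{The main obstacle} is Step 3. Without the left structural condition, the naive energy estimate leaves a residue of the form $\int_{\R}(1+t)^{-1/2}e^{-cx_1^2/(1+t)}(\phi^2+\varphi^2+\zeta^2)\,dx_1$ which is absorbed neither by the basic dissipation nor by any uniform anti-derivative bound. Closing it will require the simultaneous use of the transformation of Step 2 — which confines the obstruction to terms carrying the fixed-sign factor $\px\rhob$ — and the wider-Gaussian weight $\tilde{\omega}_{-\frac12}$, so that $G_k$ dominates the weighted quadratic term while $\pt\tilde{\omega}_{-\frac12}$ provides the missing time decay; calibrating $\tilde c_0<c_0$ and tracking how the three Gaussians in $D_{-\alpha}$, centered at $0$, $\lambda_1^-(1+t)$ and $\lambda_3^+(1+t)$, interact through the diagonalization will be the heaviest part of the bookkeeping.
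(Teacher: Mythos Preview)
Your overall architecture matches the paper's: zero/non-zero mode splitting, the transformation $(\Phi,\Psi,W)\mapsto(\tilde\Phi,\tilde\Psi,\tilde W)$, diagonalization to $(b_1,b_2,b_3)$, the sign-definite dissipation $G_k$, Poincar\'e for the non-zero modes, and a continuity argument. But two points in Step~3 are stated in a way that would not close.

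First, the role of $\tilde c_0<c_0$ is inverted. Precisely because the Gaussian in $\tilde\omega_{-1/2}$ is \emph{wider} than that of $\px\rhob$, the term $\bar\delta\int_\R\tilde\omega_{-1/2}|\px^k b_i|^2\,dx_1$ is \emph{not} dominated by $G_k$: for large $|x_1|$ one has $\tilde\omega_{-1/2}\gg \delta^{-1/2}\px\rhob$. The paper does not absorb this term into $G_k$ directly; it derives a second weighted identity by multiplying the $\px^k b_i$-equation by $\tilde h\,\px^k b_i$ with $\tilde h(x_1,t)=\int_{-\infty}^{x_1}\tilde\omega_{-1/2}\,dy$. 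The good term comes from the \emph{convection} $\Lambda\px B$ (the nonzero speeds $\tilde\lambda_1,\tilde\lambda_3$): after integration by parts, $-\tfrac12\px(\tilde h\tilde\lambda_i)\ge c\,\tilde\omega_{-1/2}$, which puts $\int_\R\tilde\omega_{-1/2}|\px^k b_i|^2$ on the left. Only after this auxiliary identity does $G_k$ re-enter, controlling the $\px\rhob$-weighted residuals appearing on its right-hand side. Your sentence ``$G_k$ dominates the weighted quadratic bulk term, while $\pt\tilde\omega_{-1/2}$ supplies the extra factor $(1+t)^{-1}$'' would, taken literally, fail.

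Second, the zero-mode output of Step~3 is misstated. One does \emph{not} get a uniform bound $\|(\tilde\Phi,\tilde\Psi,\tilde W)\|_{H^3(\R)}^2\lesssim\varepsilon_0^2+\delta$: in fact $E_0\sim\|(\tilde\Phi,\tilde\Psi,\tilde W)\|_{L^2}^2$ \emph{grows} like $(1+t)^{1/2}$. What closes is a time-weighted hierarchy: Gronwall on the $E_0$-inequality gives $E_0\lesssim(1+t)^{1/2}$; multiplying the $E_1$-inequality by $(1+t)$ and integrating gives $E_1\lesssim(1+t)^{-1/2}$; multiplying the $E_2$-inequality by $(1+t)^2$ gives $E_2\lesssim(1+t)^{-3/2}$. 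The $L^\infty$-decay $(1+t)^{-1/2}$ then follows from the one-dimensional interpolation $\|f\|_{L^\infty}\le\|f\|_{L^2}^{1/2}\|\px f\|_{L^2}^{1/2}$ applied to $(\phim,\varphim,\zetam)$ with $\|\cdot\|_{L^2}^2\sim E_1$ and $\|\px\cdot\|_{L^2}^2\sim E_2$ --- not from ``interpolating against the $L^1_{x_1}$-decay of the $D_{-\alpha}$-type sources''. Also note that the non-zero mode energy estimate in the paper yields only $H^1$ exponential decay; the $H^3$ control is recovered afterwards by a separate higher-order estimate on the original, undecomposed perturbation system, using the already-established lower-order bounds.
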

	
	\begin{Rem}
		We emphasize that according to the results of \cite{LX,XZ} where they used the method of approximate Green's functions to obtain pointwise estimates, the decay rate obtained in \cref{mt-decay} is optimal. Moreover, we also obtain the exponential decay rate for the non-zero mode.
		
	\end{Rem}
	\begin{Rem}
		To the best of our knowledge,  \cref{mt} can be regarded as the first result for the nonlinear asymptotic stability of the multi-dimensional entropy wave and also for systems with physical viscosity but without the left structural condition as in \eqref{NSlr.veri}.
	\end{Rem}
	
	\begin{Thm}[Zero initial mass]\label{mt0}
		Under the same assumptions as \cref{mt} and further assuming that the initial mass is zero, that is 
		\begin{align*}
			\int_{\Omega}{[(\rho_0,\m_0,\E_0)(x)-(\rhob,\mb,\Eb)(x_1,0)]}dx=0,
		\end{align*}
		the Cauchy problem \cref{NS} admits a unique global smooth solution $(\rho, u, \theta)$ satisfying
		$$
		(\rho-\rhob, \uf-\bar{\uf}, \theta-{\thetab}) \in C\left(0,+\infty ; H^{3}(\Omega)\right), \nabla(\uf-\bar{\uf}, \theta-\Tb) \in L^{2}\left(0,+\infty ; H^3(\Omega)\right), \nabla(\rho-\rhob)\in L^{2}\left(0,+\infty ; H^2(\Omega)\right),
		$$
		and further
		\begin{align}\label{mt-decay-1}
			\sup _{x \in \Omega}\left|(\rho, \uf, \theta)(x, t)-(\bar{\rho}, \bar{\uf}, \bar{\theta})\left(x_{1},t\right)\right|\le C(\varepsilon_0+\delta^{\frac12})(1+t)^{-\frac{3}{4}}\ln^{\frac{1}{2}} (2+t).
		\end{align}
		Moreover, the non-zero modes of the unique solution have an exponential decay rate
		\begin{align}\notag
			\sup _{x \in \Omega 
			}\left|\D_{\neq}\left[(\rho, \uf, \theta)(x, t)-(\bar{\rho}, \bar{\uf}, \bar{\theta})(x_1,t)\right]\right|\le C\varepsilon_0e^{-c t},
		\end{align}
		where $C,c$ are  positive constants independent of $\varepsilon_0$ and $\delta$, and $\D_{\neq}$ is defined in \eqref{def-decom}.
	\end{Thm}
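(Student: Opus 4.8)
\textbf{Proof proposal for Theorem~\ref{mt0}.}
The plan is to run the same weighted energy machinery used for Theorem~\ref{mt}, but to exploit the zero-mass hypothesis to gain an extra half power of decay, from $(1+t)^{-1/2}$ to $(1+t)^{-3/4}$ (up to a logarithmic loss). First I would set up the decomposition into zero and non-zero Fourier modes as in \eqref{def-decom}; the non-zero mode analysis is identical to Theorem~\ref{mt} since the Poincar\'e inequality on $\Torus^2$ gives exponential decay regardless of the mass assumption, so that part of the conclusion is immediate. The work is entirely on the zero mode $(\phim,\psim,\mr w)$ and its anti-derivatives. Because the total initial mass vanishes, the anti-derivative $(\Phi,\Psi,W)$ and, crucially, after the transformation $(\tilde\Phi,\tilde\Psi,\tilde W)$, now lies in $L^2$ with a genuinely integrable initial datum; moreover one further anti-derivative $(\Phi^{(1)},\Psi^{(1)},W^{(1)})$ can be defined. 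This is what ultimately produces the faster rate.

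The key steps, in order: (1) rewrite the zero-mode integrated system in the diagonalized variables $b=(b_1,b_2,b_3)$ as in \eqref{eq-ever}, \eqref{sec-n-3-tt}, so that both structural conditions hold and the dissipation term $G_k=C\delta^{-1/2}\int_\R \px\rhob(|\px^k b_1|^2+|\px^k b_3|^2)\,dx_1$ is available at each order $k=0,1,2,3$; (2) carry out the basic weighted energy estimate with weight $\tilde\omega_{-1/2}$ from \eqref{errors-1}, tracking carefully the error terms coming from $\tilde H_i,\tilde{\mathbf H}_{2i},\tilde{\mathbf H}_3$, which by \eqref{tildeE} decay like $D_{-1}$; here is where the \textbf{key cancellation} advertised as \eqref{key} must be used --- the slowly decaying contributions in the $2$-characteristic field must cancel, since $G_k$ only controls $b_1,b_3$ and not $b_2$; (3) invoke the Poincar\'e-type inequality \eqref{lem5} to absorb the critical (non-dissipated) terms in the energy inequality against the time-weighted dissipation, closing a Gronwall-type estimate that yields $\|(\Phi,\Psi,W)(t)\|^2 + \cdots \lesssim (1+t)^{1/2}\ln(2+t)$ or a similar controlled growth; (4) interpolate: since the zero mass allows one extra integration, the anti-derivative grows at most like $(1+t)^{1/2}$ while the original perturbation energy decays; balancing these against the one-dimensional Sobolev/Gagliardo--Nirenberg inequality $\|f\|_{L^\infty(\R)}^2\lesssim \|f\|_{L^2}\|\px f\|_{L^2}$ applied to the $x_1$-profile gives the pointwise bound $(1+t)^{-3/4}\ln^{1/2}(2+t)$ in \eqref{mt-decay-1}; (5) combine with the exponential non-zero-mode bound to conclude the full $L^\infty(\Omega)$ estimate.

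Throughout, the \emph{a priori} assumption is the smallness of $\varepsilon$ and $\delta$ together with the decay ansatz; the continuation argument closes the global existence exactly as in Theorem~\ref{mt}, with the improved rate fed back consistently. The main obstacle I anticipate is step~(2)--(3): without the left structural condition the naive error terms in the second- and third-order weighted estimates decay too slowly (only like $(1+t)^{-1/2}$ against the weight), and the only way to beat them is the combination of the cancellation \eqref{key} in the linearly degenerate field with the weighted dissipation $G_k$ and the Poincar\'e inequality \eqref{lem5}. Getting the weights to be mutually consistent --- in particular choosing $\tilde c_0<c_0$ in \eqref{errors-1} so that $\p^{k+1}\rhob$ is controlled by $\tilde\omega_{-1/2}^{1/2}\,\px\rhob^{1/2}$-type bounds --- is the delicate bookkeeping that makes the argument work, and it is also where the logarithmic loss in \eqref{mt-decay-1} enters. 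I expect the zero-mass cancellation and the sharp tracking of these weighted error terms, rather than any single estimate, to be the crux of the proof.
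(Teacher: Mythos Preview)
Your proposal captures several correct ingredients --- the zero/non-zero mode decomposition, the diagonalization, the dissipation $G_k$, the cancellation \eqref{key}, and the Poincar\'e-type inequality of Lemma~\ref{lem5} --- but it contains a real misunderstanding of \emph{why} zero initial mass improves the rate. You write that ``the zero mass allows one extra integration'' and propose defining a second anti-derivative $(\Phi^{(1)},\Psi^{(1)},W^{(1)})$. The paper does \emph{not} do this, and this is not the mechanism. What zero mass actually buys is that all the $\bar\Theta_i$ in \eqref{eq-initial mass} vanish, so the ansatz is simply the contact wave $(\rhob,\mb,\bar\E)$ with \emph{no diffusion waves}. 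Consequently the error terms $\tilde H_1,\tilde{\mathbf H}_{2i},\tilde{\mathbf H}_3$ that you plan to track are absent; only the intrinsic errors $Q_1\sim\delta(1+t)^{-1}e^{-cx_1^2/(1+t)}$ and $Q_2\sim\delta(1+t)^{-3/2}e^{-cx_1^2/(1+t)}$ remain. The cancellation \eqref{key} then says that in the $b_2$-equation only $Q_2$ survives, not $Q_1$, which is what makes the second characteristic field harmless.

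A second gap is in step~(3). You invoke Lemma~\ref{lem5} but do not identify the tool that proves it: the heat-kernel weighted inequality of Lemma~\ref{heat0} (from \cite{HLM}). This is essential because after the cancellation the lowest-order estimate \eqref{111} still contains $\int_\R\omega_{-1}(\bar\Phi^2+|\bar\Psi|^2+\bar W^2)\,dx_1$ on the right, and one \emph{cannot} bound this by $(1+t)^{-1}\|V\|_{L^2}^2$ without losing the argument. Lemma~\ref{heat0} converts this weighted integral (after time integration) into something controlled by $\int_0^t\|\px V\|_{H^1}^2\,d\tau$, which closes. With this in place the hierarchy is $\sum_i E_i\lesssim\ln(2+t)$, $E_1\lesssim(1+t)^{-1}\ln(2+t)$, $E_2\lesssim(1+t)^{-2}\ln(2+t)$ --- not the $(1+t)^{1/2}\ln(2+t)$ growth you predicted --- and the one-dimensional Sobolev interpolation $\|f\|_{L^\infty}\le\|f\|_{L^2}^{1/2}\|\px f\|_{L^2}^{1/2}$ applied to $\px V$ then gives $(1+t)^{-3/4}\ln^{1/2}(2+t)$. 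The logarithm comes from integrating $C\delta(1+t)^{-1}$ in \eqref{111}, not from weight bookkeeping.
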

	
	\begin{Rem}
		We point out that the time decay rate in \cref{mt-decay-1} is optimal in terms of the result obtained by using pointwise estimate for conservation laws with artificial viscosity in \cite{XZ}.
	\end{Rem}
	
	\begin{Rem}
		It should be noted that the optimal decay rate \cref{mt-decay-1} cannot be achieved simply by applying the conclusions and methods of deriving \cref{mt-decay} in case of non-zero initial mass. For zero initial mass, there are several key cancellations we need to make use of. For instance, \cref{key} shows that the slow decay terms do not appear in the 2-characteristic field. And we need a Poincar\'e type inequality as in \cref{lem5} to deal with the critical terms.
	\end{Rem}

	\subsection{The decomposition for zero and non-zero modes}
	
	We decompose the solution into the principal and transversal parts corresponding to the zero and non-zero modes in Fourier space, respectively. For the purpose, we define $\mathbf{D}_0$ and $\mathbf{D}_{\neq}$ by
	\begin{align}\label{def-decom}
		\D_{0} f:=\mathring{f}:=\int_{\Torus^2}f dx_2dx_3,\qquad \D_{\neq} f:=f_{\neq}:=f-\mathring{f},
	\end{align} 
	for any function $f$ integrable on $\Torus^2$. We also introduce the following two notations for the sake of simplicity:
	\begin{align}\notag
		\mathcal{N}_{2}(U):=\frac{\m\otimes \m}{\rho}-(\gamma-1)\frac{|\m|^2}{2\rho}\mathbb{I},\qquad\qquad \mathcal{N}_{3}(U):=\frac{\m\E}{\rho}+p\frac{\m}{\rho}-\frac{\m}{\rho}S,
	\end{align}
	where $\mathbb{I}$ denotes the identity matrix. With these, the Navier-Stokes equations can be rewritten as
	\begin{align}\label{NS-ln}
		\begin{cases}
			&\p_t\rho+\dv\ \m=0,\\
			&\pt \m+(\gamma-1)\nabla\E+\dv\ \mathcal{N}_2(U)=\dv S,\\
			&\pt \E+\dv\ \mathcal{N}_3(U)=\frac{\kappa(\gamma-1)}{R}\Delta(\frac{\E}{\rho}-\frac{|\uf|^2}{2}).
		\end{cases}
	\end{align}
	
	\section{Proof of \cref{mt}: the non-zero initial mass}
	In this section, we are devoted to proving \cref{mt} in case of non-zero initial mass. We first derive the perturbation equations, and then introduce a decomposition of zero and non-zero modes for the conserved quantities. The key part is to obtain the {\it a priori} estimates under suitable {\it a priori} assumptions for the zero and non-zero modes, respectively. Note that the zero and non-zero modes are not completely decoupled. 
	
	\subsection{Systems for perturbations}
	We start to derive the systems of perturbations. Recall
	\begin{align}\label{2025.6.02-1}
		(\phi,\mathbf{\psi},w,\mathbf{\varphi},\zeta):=(\rho-\tilde{\rho},\m-\mtb,\E-\Et,\uf-\utb,\T-\Tt).
	\end{align}
	By \cref{NS,NS-ansatz}, the system of perturbations can be written as 
	\begin{equation}\label{sys-pertur}
		\left\{
		\begin{aligned}
			&\pt\phi+ \uf \cdot\nabla\phi+\rho \dv\varphi=\hat{R}_1,\\
			&\rho\pt\varphi+\rho \uf \cdot\nabla\varphi+(\gamma-1)(\T\nabla\phi+\rho\nabla\zeta)=\mu \triangle\varphi+(\mu+\lambda)  \nabla \dv\varphi+\hat{\mathbf{R}}_2,\\
			&\rho\pt\zeta+\rho \uf \cdot\nabla\zeta+ (\gamma-1)\rho\T \dv\varphi=\kappa \triangle\zeta+\hat{R}_3,
		\end{aligned}
		\right.
	\end{equation}
	where 
	\begin{equation}\label{2025.6.02-8}
		\begin{aligned}		\hat{R}_{1}:=&-\px\tilde{H}_1-\varphi\cdot\nabla\tilde{\rho}-\dv\utb\phi,\\
			\hat{\mathbf{R}}_{2}:=&-\rho\mathbf{\varphi}\cdot\nabla\mathbf{\tilde{u}}+ R\nabla\tilde{\rho}(\frac{\tilde{\T}}{\tilde{\rho}}\phi-\zeta)-\frac{\phi}{\tilde{\rho}}(\mu\lap\mathbf{\tilde{u}}+(\mu+\lambda)\nabla\dv\mathbf{\tilde{u}})-\frac{\rho}{\tilde{\rho}}\big(\px\tilde{\mathbf{H}}_2-\utb(\px \tilde{H}_1)\big),\\
			\hat{R}_3:=&\frac{\mu}{2} \left|\nabla\uf+(\nabla\uf)^{t}\right|^2+\lambda(\dv\uf)^2-\frac{\mu}{2} \left|\nabla\utb+(\nabla\utb)^{t}\right|^2-\lambda(\dv\utb)^2\\
			&-\rho\varphi\cdot\nabla\Tt- (\gamma-1)\rho\dv\utb\zeta-\frac{\phi}{\tilde{\rho}}\left[\kappa \Delta\tilde{\T}+\mu\frac{\abs{\nabla\utb+(\nabla\utb)^t}^2}{2}+\lambda\abs{\dv\utb}^2\right]\\
			&-\frac{\rho}{\tilde{\rho}}\bigg(\dv\tilde{\mathbf{H}}_3-\utb\cdot\dv{\tilde{\mathbf{H}}}_2-\big(\frac{\abs{\utb}^2}{2}+\Tt\big)\px\tilde{H}_1\bigg).
		\end{aligned}
	\end{equation}
	
	We further study the perturbations for the zero mode of $(\rho,m,\E)$. The anti-derivatives for the zero mode is crucial in our analysis, and thus we denote
	\begin{align}\label{pert-anti}
		\Phi:=\int_{-\infty}^{x_1}\mr{\phi}dy_1,\quad\Psi:=\int_{-\infty}^{x_1}\mr{\psi}{dy_1},\quad W:=\int_{-\infty}^{x_1}\mr{w}{dy_1}.\quad
	\end{align}
	Then, combining $\Do$\cref{NS-ln} and \cref{NS-ansatz}, we have
	\begin{align}\label{eq-pert1}
		\left\{\begin{array}{l}
			\pt\Phi+\px\Psi_{1}=\mathcal{Q}_1, \\ 
			\pt\Psi_{1}+(\gamma-1)\px W=\left(2\mu+\lambda\right)\px \left(\mathring{u}_{1}- \ut_{1}\right)+\mathcal{Q}_{21}, \\ 
			\pt\Psi_{i}=\mu \px\left(\mathring{u}_{i}- \ut_{i}\right)+\mathcal{Q}_{2i},\quad i=2,3, \\ 
			\pt W+\gamma\tilde{\theta}\px\Psi_1= \kappa \px\zetam + \mathcal{Q}_{3},\end{array}\right.
	\end{align}
	where
	\begin{align}\label{2025.6.02-3}
		\begin{aligned}
			\mathcal{P}(U):=&\frac{m_1^2}{\rho}-(\gamma-1)\frac{\abs{\mathbf{m}}^2}{2\rho}\qquad\quad\mathcal{Q}_1:=-{\tilde{H}}_1,\\
			\mathcal{Q}_{21}:=&\bigg(-\mathring{\mathcal{P}}(U)+\mr{\mathcal{P}}(\tilde{U})\bigg)-\tilde{\mathbf H}_{21}:=\mathcal{Q}_{21}^{(1)}+\mathcal{Q}_{21}^{(2)},\\
			\mathcal{Q}_{2i}:=&\bigg(-\D_0(\frac{{m}_{1} {m}_{i}}{{\rho}})+\D_0(\frac{{\mt}_{1} {\mt}_{i}}{{\rhot}})\bigg)-{\tilde{\mathbf H}}_{2i}:=\mathcal{Q}_{2i}^{(1)}+\mathcal{Q}_{2i}^{(2)},\quad i=2,3,\\
			\mathcal{Q}_{3}:=&\bigg({\gamma}\tilde{\theta}\px\Psi_1-\mr{\mathcal{N}}_{31}(U)+\mr{\mathcal{N}}_{31}(\tilde{U})\bigg)-{\tilde{\mathbf H}}_3:=\mathcal{Q}_{3}^{(1)}+\mathcal{Q}_{3}^{(2)}.
		\end{aligned}
	\end{align}
	
	\subsection{Proof of \cref{mt}}
	\begin{flushleft}
		\textbf{(i) Local existence theorem.}
	\end{flushleft}
	Let us start from studying the local existence. We first define the solution space as
	\begin{align*}
		&X_{m,M}(T):=\sup_{0\leq t\leq T}\bigg\{({\phi},{\varphi},{\zeta},\Phi,\Psi,W)\bigg|	({\phi},{\varphi},{\zeta}) \in C\left(0,+\infty ; H^{3}(\Omega)\right),
		(\phi,\nabla{\varphi},\nabla{\zeta}) \in L^{2}\left(0,+\infty ; H^3(\Omega)\right),\nonumber\\
		&\quad\quad\quad\quad\quad\quad\quad\quad\quad\quad\quad\quad\quad (\Phi,\Psi,W)\in C\left(0,+\infty ;H^2(\R)\right),\ \ (\px\Phi,\px\Psi,\px W)\in L^2\left(0,+\infty ;H^2(\R)\right),\nonumber\\
		&\quad\quad\quad\quad\quad\quad\quad\quad\quad\quad\quad\quad\quad \norm{(\Phi,\Psi,W)}_{H^2(\R)}+\left\|({\phi},{\varphi},{\zeta})\right\|_{H^3(\Omega)}\leq M,\ \inf_{x,t} ({\phi}+\bar{\rho})\geq m>0\bigg\}.
	\end{align*}
	
	\begin{Prop}[Local existence]\label{Thm-local}
		There exist constants $b$, $M$, $m>0$ such that if $$\|({\Phi}_0,{\Psi}_0,{W}_0)\|_{H^3}+\left\|({\phi}_0,{\varphi}_0,{\zeta}_0)\right\|_{H^3}+\norm{(\phi_0,\psi_0,w_0)}_{L^1}\leq M\ \text{and}\ \inf_{x} ({\phi}_0+\bar{\rho}(x_1,0))\geq m>0,$$
		then there exists a positive time $T_0=T_0(m,M)>0$ such that \cref{NS} admits a unique solution $(\rho,u,\theta)$ satisfying
		\begin{align}\notag
			({\phi},{\varphi},{\zeta},{\Phi},{\Psi},{W})\in X_{\frac{1}{2}m,bM}(T_0),
		\end{align}
		as well as
		\begin{align}\notag
			\|({\Phi},{\Psi},{W})\|_{L^\infty}\leq bM.
		\end{align}
	\end{Prop}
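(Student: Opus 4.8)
The plan is to establish local-in-time existence and uniqueness by a standard linearization-plus-iteration argument, working simultaneously with the original unknowns $({\phi},{\varphi},{\zeta})$ on the full domain $\Omega=\R\times\Torus^2$ and the anti-derivative variables $({\Phi},{\Psi},{W})$ of the zero mode on $\R$. First I would set up the iteration scheme: given an approximating sequence $({\phi}^{n},{\varphi}^{n},{\zeta}^{n})$, solve the linear system obtained from \cref{sys-pertur} by freezing the transport and coefficient functions at step $n$ (so $\rho^{n}={\phi}^{n}+\rhot$, $\uf^{n}={\varphi}^{n}+\utb$, $\theta^{n}={\zeta}^{n}+\Tt$ appear in the principal coefficients and the right-hand side terms $\hat R_{1},\hat{\mathbf R}_{2},\hat R_{3}$), producing the next iterate as the solution of a linear parabolic system for the momentum/energy components coupled with a linear transport equation for the density. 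For this linear problem one has classical $H^{3}(\Omega)$ well-posedness on a short time interval, with estimates depending only on $m$ (the lower bound on the density) and $M$ (the $H^{3}$ norm), provided one keeps the density bounded below --- this is guaranteed on a short enough time by the transport structure of the continuity equation and the fact that $\inf_{x}({\phi}_{0}+\rhob(x_{1},0))\ge m>0$.

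Next I would derive the uniform-in-$n$ a priori bound on the short interval $[0,T_{0}]$. The energy estimate for $({\phi}^{n+1},{\varphi}^{n+1},{\zeta}^{n+1})$ in $H^{3}(\Omega)$ proceeds by testing the linearized equations against the unknowns and their spatial derivatives up to order three; the parabolic terms $\mu\lap{\varphi}$, $(\mu+\lambda)\nabla\dv{\varphi}$, $\kappa\lap{\zeta}$ furnish the dissipation $\|\nabla{\varphi}^{n+1},\nabla{\zeta}^{n+1}\|_{H^{3}}^{2}$ (and $\|{\phi}^{n+1}\|_{H^{3}}$ controlled via the momentum equation, yielding $\|\nabla{\phi}^{n+1}\|_{H^{2}}^{2}$ in $L^{2}_{t}$ as reflected in the definition of $X_{m,M}(T)$), while all commutator and nonlinear terms are bounded by $C(m,M)(1+\|({\phi}^{n},{\varphi}^{n},{\zeta}^{n})\|_{H^{3}}^{2})$ using Sobolev embedding $H^{3}(\Omega)\hookrightarrow W^{1,\infty}$ in three space dimensions and the smoothness and decay of the ansatz functions $\rhot,\utb,\Tt$ and the error terms $\tilde H_{i},\tilde{\mathbf H}_{2i},\tilde{\mathbf H}_{3}$ recorded in \cref{tildeE}. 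A parallel energy estimate for $({\Phi}^{n+1},{\Psi}^{n+1},{W}^{n+1})$ in $H^{2}(\R)$ from the integrated system \cref{eq-pert1} uses the dissipative structure there ($\px(\mathring u_{1}-\ut_{1})$ etc.) together with $L^{1}$-to-$L^{\infty}$ control of the remainders $\mathcal Q_{1},\mathcal Q_{2i},\mathcal Q_{3}$; here the $L^{1}(\Omega)$ bound on $(\phi_{0},\psi_{0},w_{0})$ is needed to make sense of the anti-derivatives and bound $\|({\Phi}_{0},{\Psi}_{0},{W}_{0})\|$. Combining, a Gronwall argument with the standard continuity-in-time trick gives that if $\|({\Phi}_{0},{\Psi}_{0},{W}_{0})\|_{H^{3}}+\|({\phi}_{0},{\varphi}_{0},{\zeta}_{0})\|_{H^{3}}+\|(\phi_{0},\psi_{0},w_{0})\|_{L^{1}}\le M$, then there is $b>1$ and $T_{0}=T_{0}(m,M)$ so that the iterates all lie in $X_{\frac12 m, bM}(T_{0})$, with the additional $L^{\infty}$ bound $\|({\Phi},{\Psi},{W})\|_{L^{\infty}}\le bM$ following from $H^{1}(\R)\hookrightarrow L^{\infty}(\R)$ (or directly from $H^{2}$).

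Finally I would establish convergence of the iteration: forming differences $({\phi}^{n+1}-{\phi}^{n},\ldots)$ and running an energy estimate in a lower norm (say $H^{2}(\Omega)$ for the physical variables and $H^{1}(\R)$ for the anti-derivatives, to avoid loss of derivatives against the frozen coefficients), one shows the scheme is contractive on $[0,T_{0}]$ after possibly shrinking $T_{0}$, using the uniform bounds just obtained together with the quadratic/Lipschitz nature of the nonlinearities. The limit then solves \cref{NS} (equivalently \cref{sys-pertur}, \cref{eq-pert1}), and uniqueness in the class $X_{\frac12 m, bM}(T_{0})$ follows from the same difference estimate applied to any two solutions. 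I expect the main technical obstacle to be the bookkeeping of the coupling between the zero-mode anti-derivative system \cref{eq-pert1} and the full physical system \cref{sys-pertur}: the two are not independent, and one must be careful that the $L^{1}\!\to\!L^{\infty}$ estimates on the remainder terms $\mathcal Q_{j}$, which involve quadratic expressions in the perturbation integrated over $\Torus^{2}$, close consistently with the $H^{3}(\Omega)$ bounds rather than requiring a norm not yet controlled --- but since this is only a short-time result the precise decay rates are irrelevant and crude Sobolev bounds suffice, so no genuinely new difficulty arises beyond organizing the estimates.
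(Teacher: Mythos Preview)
The paper does not actually prove this proposition: immediately after stating it the authors write ``For simplicity, we omit the proof of \cref{Thm-local}'' and proceed directly to the a priori estimates. Your outline --- linearization of \cref{sys-pertur} with frozen coefficients, iteration, uniform $H^{3}$ energy bounds via the parabolic dissipation plus transport structure, and contraction in a lower norm --- is precisely the standard argument such an omission implicitly invokes, so your proposal is consistent with the paper's (non-)treatment.
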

	
	\begin{flushleft}
		\textbf{(ii) {The {\it a priori} assumptions.}}
	\end{flushleft}

	Recall that we have decomposed the solution $(\rho,u,\theta)$ into the principal and transversal parts. In what follows we impose the {\it a priori} assumptions on the principal and transversal parts separately. That is, for a positive $c>0$,
	\begin{align}\label{apa}
		\left\{\begin{aligned}
			&\sup_{0\leq t\leq T}\Big\{\left\|(\Phi,\Psi, W)\right\|^2_{L^\infty}+(1+t)^{\frac{1}{2}}\norm{(\phi,\varphi,\zeta)}_{L^2}^2+(1+t)^{\frac{3}{2}}\norm{(\nabla\phi,\nabla\varphi,\nabla\zeta)}_{L^2}^2+\left\|(\phi,\varphi,\zeta)\right\|_{H^3}^2\Big\}\leq \chi^2,\\
			&\sup_{0\leq t\leq T}e^{ct}\norm{\phi_{\neq},\varphi_{\neq},\zeta_{\neq}}^2_{H^1}\leq \varepsilon^2+\delta.
		\end{aligned}\right.
	\end{align} 
	Note that we need to close the above {\it a priori} assumptions. Indeed, we are able to prove the following {\it a priori} estimates.
	
	\begin{Prop}[{\it a priori} estimates]\label{Thm-ape}
		Assume that $(\phi,\varphi,\zeta)$ is the unique solution given in \cref{Thm-local} and satisfies the {\it a priori} assumptions \cref{apa}. Then the following estimates hold
		\begin{align*}
			&\left\|({\Phi},{\Psi},{W})\right\|_{L^\infty}^2+(1+t)^{\frac{1}{2}}\left\|({\phi},{\varphi},{\zeta})\right\|_{L^2}^2+(1+t)^{\frac{3}{2}}\left\|\px({\phi},{\varphi},{\zeta})\right\|_{L^2}^2+\sum_{i=2}^{3}\left\|\px^{i}({\phi},{\varphi},{\zeta})\right\|_{L^2}^2\leq C(\delta+\varepsilon),\\
			&\qquad\left\|(\phi_{\neq},\varphi_{\neq},\zeta_{\neq})\right\|_{H^1}^2\leq \varepsilon^2e^{-c(1+t)},
		\end{align*}
		where $C,c>0$ are the universal constants independent of any small parameters throughout the paper.
	\end{Prop}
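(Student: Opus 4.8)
The strategy is a continuity argument: we improve the {\it a priori} assumptions \cref{apa} to the stated estimates, so that together with \cref{Thm-local} a standard bootstrap closes the global existence. We work with the two coupled subsystems: the zero-mode (integrated) system \cref{eq-pert1} for $(\Phi,\Psi,W)$ and the non-zero-mode system obtained by applying $\D_{\neq}$ to \cref{sys-pertur}. The first step is to pass to the transformed variables $(\tilde\Phi,\tilde\Psi,\tilde W)$ introduced in the strategy section, so that the resulting system \cref{eq-ever} satisfies \emph{both} structural conditions in \cref{SC}; this is what makes the zero-order energy estimate for the integrated system work despite \cref{NSlr.veri}. One then diagonalizes this system into the characteristic variables $b_1,b_2,b_3$ (the $b_2$ component being the linearly degenerate one) and performs $L^2$ energy estimates on $\p_1^k$ of the diagonalized system for $k=0,1,2,3$. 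The crucial output is the dissipation term $G_k=C\delta^{-1/2}\int_{\R}\px\rhob\,(|\px^k b_1|^2+|\px^k b_3|^2)\,dx_1$, which has a favorable sign because $\px\rhob$ has a fixed sign by \cref{2025-10-6}.

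The second, harder step is to control the error terms $\mathcal{Q}_1,\mathcal{Q}_{2i},\mathcal{Q}_3$ from \cref{2025.6.02-3} and the nonlinear contributions. The $\mathcal{Q}^{(2)}$-type terms (carrying the $\tilde{\mathbf H}$'s) are bounded by $D_{-1}$ from \cref{tildeE}, so when paired against the anti-derivatives they produce borderline $(1+t)^{-1/2}$-growth of the first-order energy — exactly the obstruction flagged in the introduction. The resolution is the weighted energy estimate: using $G_k$ as a reservoir of dissipation one proves weighted bounds of the form $\int_{\R}\tilde\omega_{-1/2}|f|^2\,dx_1\lesssim \cdots$ (as in \cref{2025-10-4,2025-10-5}), where the weight $\tilde\omega_{-1/2}$ from \cref{errors-1} has slightly smaller Gaussian exponent $\tilde c_0<c_0$; since $\p^{k+1}\rhob$ is not controlled by $(1+t)^{-k/2}\px\rhob$ pointwise, the weight must be chosen below $c_0$ to absorb the extra derivative of the profile. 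These weighted quantities, fed back into the hierarchy of derivative estimates, upgrade the decay so that $(1+t)^{1/2}\|(\phi,\varphi,\zeta)\|_{L^2}^2$ and $(1+t)^{3/2}\|\px(\phi,\varphi,\zeta)\|_{L^2}^2$ stay bounded by $C(\delta+\varepsilon)$, while the top-order $H^3$ norm is closed by ordinary (unweighted) estimates using the parabolic smoothing $\nabla(\varphi,\zeta)\in L^2H^3$, $\nabla\phi\in L^2H^2$. The $L^\infty$ bound on $(\Phi,\Psi,W)$ follows by Sobolev embedding in $x_1$ together with the integrated-energy bound and a time-interpolation exploiting the improved decay of the lower-order norms.

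The non-zero-mode estimate is comparatively soft: since $f_{\neq}$ has zero average over $\Torus^2$, the Poincar\'e inequality $\|f_{\neq}\|_{L^2(\Torus^2)}\le C\|\nabla_{x_2,x_3}f_{\neq}\|_{L^2(\Torus^2)}$ holds, so the diffusive terms $\mu\triangle\varphi_{\neq}$, $\kappa\triangle\zeta_{\neq}$ dominate and give a spectral gap. Running an $H^1$ energy estimate on $\D_{\neq}$\cref{sys-pertur}, the linear part yields exponential decay $e^{-ct}$; the coupling with the zero mode and the nonlinear terms are quadratic and small by \cref{apa}, hence absorbable, and the source terms $\D_{\neq}$ of the $\hat R_i$'s are controlled by the (exponentially decaying) non-zero modes of the ansatz errors. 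This produces $\|(\phi_{\neq},\varphi_{\neq},\zeta_{\neq})\|_{H^1}^2\le \varepsilon^2 e^{-c(1+t)}$, which is a strict improvement of the second line of \cref{apa}. The main obstacle throughout is step two — designing the weighted estimate so that the slowly-decaying $D_{-1}$ error terms are genuinely absorbed by $G_k$ while simultaneously not losing the extra derivative on $\rhob$; everything else is a careful but standard energy-method bookkeeping.
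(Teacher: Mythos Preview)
Your outline follows the paper's approach closely and gets the architecture right, but there are two concrete gaps that would block the closure as written.

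First, you present the dissipation $G_k=C\delta^{-1/2}\int_{\R}\px\rhob(|\px^k b_1|^2+|\px^k b_3|^2)\,dx_1$ as the output of a plain $L^2$ estimate on the diagonalized system, but a plain estimate only produces $\int\px\rhob|\px^k b_i|^2$ with no $\delta^{-1/2}$ gain. The paper manufactures that gain by testing against the \emph{weighted} multiplier $\bar B^{(k)}=(v_1^n\px^k b_1,\px^k b_2,v_1^{-n}\px^k b_3)$ with $v_1=\rhob/\rho_+$ and $n=4[\delta^{-1/2}]+1$; the transport terms then throw off $\tfrac{n}{2}\tilde\lambda_i\,\px v_1\,(\px^k b_i)^2\sim\delta^{-1/2}\px\rhob(\px^k b_i)^2$. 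Without this $\delta^{-1/2}$ amplification, $G_k$ cannot swallow the commutator terms $\mathcal M_k^{(1)}$ (of size $\px\rhob|\px^k b_i|^2$) that arise when derivatives hit the profile, and the $\tilde\omega_{-1/2}$ weighted step you describe has nothing to feed on.

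Second, the dissipation matrix $A_4=\tilde L A_2\tilde R$ is only nonnegative \emph{semi}-definite: by \cref{ds} it controls $|\px^{k+1}\tilde\Psi_1|^2+|\px^{k+1}\tilde W|^2$ but annihilates the $\tilde\Phi$-direction. Hence the diagonal estimate alone does not give the full $K_k$. The paper recovers $\|\px^{k+1}\tilde\Phi\|_{L^2}^2$ in a separate step (Step~3): test $\px^k$ of the momentum equation against $\px^{k+1}\tilde\Phi$, combine with the mass equation to generate the good term $(\gamma-1)\|\px^{k+1}\tilde\Phi\|^2$, and use $\pt\Phi=-\px\Psi_1+\mathcal Q_1$ to handle the cross time-derivative. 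Likewise $\tilde\Psi_2,\tilde\Psi_3$ satisfy decoupled heat-type equations and need their own direct estimate (Step~4). Only after these are added to $\tilde E_k,\tilde K_k$ do you obtain the full $E_k,K_k$ used in the Gronwall hierarchy \cref{11-1}--\cref{33-1}. Finally, note that the diagonalized estimates run over $k=0,1,2$ (not $3$); the top-order $H^3$ closure is done back in the original perturbation system \cref{sys-pertur1}, as you indicate.
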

	By \cref{Thm-ape} and \cref{Thm-local}, using the standard continuity argument, we obtain \cref{mt}. For simplicity, we omit the proof of \cref{Thm-local}. In what follows, we focus only on the proof of \cref{Thm-ape}. 
	
	\medskip
	\begin{flushleft}
		\textbf{(iii) The decay rate.}
	\end{flushleft}
	By the definition of zero mode and non-zero mode decomposition, one has
	\begin{align}\notag
		\|(\phi,\varphi,\zeta)\|_{L^{\infty}}\leq C\|(\phi_{\neq},\varphi_{\neq},\zeta_{\neq})\|_{L^\infty}+C\|(\phim,\varphim,\zetam)\|_{L^\infty}.
	\end{align}
	Furthermore, we have
	\begin{align}\notag
		\|(\phim,\varphim,\zetam)\|_{L^\infty}\leq C\|(\phim,\varphim,\zetam)\|^{\frac{1}{2}}_{L^2}\|\px(\phim,\varphim,\zetam)\|^{\frac{1}{2}}_{L^2}\leq C(\delta^{\frac{1}{2}}+\varepsilon_0^{\frac{1}{2}})(1+t)^{-\frac{1}{2}},
	\end{align}
	and
	\begin{align*}
		\|(\phi_{\neq},\varphi_{\neq},\zeta_{\neq})\|_{L^{\infty}}\leq& C\|(\phi_{\neq},\varphi_{\neq},\zeta_{\neq})\|^{\frac{1}{4}}_{L^2}\|\nabla^2(\phi_{\neq},\varphi_{\neq},\zeta_{\neq})\|^{\frac{3}{4}}_{L^2}
		\leq C\varepsilon_0 e^{-ct},
	\end{align*}
	where $C$ and $c$ are positive constants. Thus
	\begin{align}\notag
		\|(\phi,\varphi,\zeta)\|_{L^{\infty}(\Omega)}\leq C(\delta^{\frac{1}{2}}+\varepsilon_0^{\frac{1}{2}})(1+t)^{-\frac{1}{2}}.
	\end{align}
	We still need to study the differences between the ansatz $(\rhot,\ut,\thetat)$ and the entropy wave $(\rhob,\ub, \thetab)$. By \cref{tilde},
	we have
	\begin{align}\notag
		\|(\tilde{\m}-\bar{\m},\tilde{\E}-\bar{\E})\|_{L^{\infty}(\Omega)}
		\leq C(\delta+\varepsilon_0) (1+t)^{-\frac{1}{2}}.
	\end{align}
	Direct calculation shows
	\begin{align}\notag
		\big| (\tilde{\T}-\bar{\T})\big|=&\bigg|\frac{\Et}{\rhot}-\frac{|\tilde{\m}|^2}{|\rhot|^2}-\frac{\bar{\E}}{\bar{\rho}}+\frac{|\bar{\m}|^2}{|\bar{\rho}|^2}\bigg|\leq C(\delta+\varepsilon_0)D_{-\frac{1}{2}}.
	\end{align}
	Similarly
	\begin{align}\notag
		|\utb-\bar{\uf}|+|\rhot-\bar{\rho}|\leq C(\delta+\varepsilon_0)D_{-\frac{1}{2}}.
	\end{align}
	Then \cref{mt-decay} has been proved.
	
	In the next section, we are going to give the proof of \cref{Thm-ape}.
	
	\subsection{Estimates on zero modes}\label{EZM}
	
	One of the key points in our proof is to introduce the following transformation
	\begin{align}\label{transformation}
		\begin{aligned}
			&\tilde{\Phi}=\Tt\Phi,\qquad\tilde{\Psi}={\Psi}- \mathbf{\ut} \Phi,\\
			&\tilde{W}={W}-\mathbf{{\utb}} \cdot \tilde{\Psi}-\left({\thetat}+\frac{| \mathbf{\ut}|^{2}}{2}\right) \Phi.
		\end{aligned}
	\end{align}
	
	In what follows, we present the relationship between different variables more clearly. For the first-order derivatives, we have
	\begin{align}\label{2025.6.02-2}
		\begin{aligned}
			&\px\Psi_{i}=\px\tilde{\Psi}_{i}+ \ut_{i} \px\Phi+ \px\ut_{i} \Phi:=\px\tilde{\Psi}_{i}+\mathcal{B}_{ir},\qquad i=1,2,3,\\
			&\px W=\px\tilde{W}+\px\left({\thetat}\Phi\right) +\px\left(\mathbf{{\utb}} \cdot {\tilde\Psi}+\frac{|\mathbf{{\utb}}|^{2}}{2} \Phi\right):=\px\tilde{W}+\px\tilde{\Phi}+\mathcal{B}_{wr}.
		\end{aligned}
	\end{align}
	
	By direct calculations, one has
	\begin{align}\label{id-wd}
		\begin{aligned}
			\pt\Psi_{i}=&\pt\tilde{\Psi}_{i}- \ut_{i}\px\Psi_{1}+\tilde u_i\mathcal Q_1 + \pt\ut_{i} \Phi:=\pt\tilde{\Psi}_i+\mathcal{B}_{id},\\
			\pt W=& \pt\tilde{W}-\left(\Tt+\frac{| \mathbf{\ut}|^{2}}{2}\right)\bigg(\px\Psi_{1}-\mathcal{Q}_1\bigg)+ \tilde{\uf} \cdot \pt\tilde{\Psi}+\pt\tilde{\uf}\cdot\tilde{\Psi}+\pt\left( {\thetat}+\frac{| \mathbf{\ut}|^{2}}{2}\right) \Phi\\
			:=&\pt\tilde{W}-\Tt\px\tilde{\Psi}_{1}+\mathcal{B}_{wd}.
		\end{aligned}
	\end{align}

	Then, by \cref{eq-pert1} and \cref{transformation}-\cref{id-wd}, we have the following system for $(\tilde{\Phi}, \tilde{\Psi}, \tilde{W})$:
	\begin{align}\label{sys-Eu0-1}
		\left\{\begin{array}{l}
			\pt\tilde{\Phi}+\Tt \px\tilde{\Psi}_{1}=J_1+\Tt\mathcal{Q}_{1}, \\
			\pt\tilde{\Psi}_{1}+ (\gamma-1)\px\left(\tilde{W}+\tilde{\Phi}\right)
			=\frac{(2\mu+\lambda)}{\rhot}  \px^2\tilde{\Psi}_{1}+\mathcal{Q}_{21}+J_{21}, \\
			\pt\tilde{\Psi}_{i}=\frac{\mu}{\rhot}  \px^2\tilde{\Psi}_{i} 
			+J_{2i}+\mathcal{Q}_{2i}, i=2,3, \\
			\pt\tilde{W}+ (\gamma-1)  {\thetat} \px\tilde{\Psi}_{1}=\frac{\kappa}{\rhot}  \px^2\tilde{W} 
			+J_{3}+\mathcal{Q}_{3},
		\end{array}\right.
	\end{align}
	where
	\begin{align*}
		J_1&:=\pt{\Tt}\Phi-\Tt\mathcal{B}_{1r},\qquad J_{2i}:=-\mathcal{B}_{id}+\left[\mu\px(\um_i-\tilde{u}_i)-\frac{\mu}{\rhot}\px^2\tilde{\Psi}_i\right]:=J_{2i}^{(1)}+J_{2i}^{(2)},\qquad i=2,3,\\
		J_{21}&:=\left[-\mathcal{B}_{1d}-(\gamma-1)\mathcal{B}_{wr}\right]+\left[(2\mu+\lambda)\px(\mr{u}_1-\tilde{u}_1)-\frac{(2\mu+\lambda)}{\rhot}\px^2\tilde{\Psi}_1\right]:=J_{21}^{(1)}+J_{21}^{(2)},\\
		J_{3}&:=\left[-\mathcal{B}_{wd}-\gamma\Tt\mathcal{B}_{1r}\right]+\left[\kappa \px\zetam-\frac{\kappa}{\tilde{\rho}}\px^2\tilde{W}\right]:=J_{3}^{(1)}+J_{3}^{(2)}.
	\end{align*}
	Before deducing the {\it a priori} estimates, we should study the source terms in \cref{sys-Eu0-1}. We use the following notations for the sake of convenience:
	\begin{align}\label{useful-notation}
		&{V}:=(\tilde{\Phi},\tilde{\Psi},\tilde{W})^{t},\qquad\qquad v:=(\phi,\varphi,\zeta)^{t},\qquad\qquad \tilde{V}:=(\tilde{\Phi},\tilde{\Psi}_1,\tilde{W})^{t},\notag\\
		&\mathcal{D}^{(k)}:=(\delta+\varepsilon) \sum_{j=1}^{k+2} D_{-\frac{j}{2}} \abs{\px^{k-j+2} V},\qquad 
		\mathcal{T}^{(0)}:=\abs{\px V}^2+ (\delta+\varepsilon)  D_{-\frac{1}{2}}\abs{\px V},\notag\\
		&\mathcal{T}^{(1)}:=\abs{\px^2 V}\abs{\px V}+(\delta+\varepsilon) \left(D_{-\frac{1}{2}}\abs{\px V}^2+D_{-1}\abs{\px V}+D_{-\frac{1}{2}}\abs{\px^2 V}\right),\notag\\
		&\mathcal{T}^{(2)}:=\abs{\px^3V}\abs{\px V}+\abs{\px^2V}^2+ (\delta+\varepsilon) D_{-\frac{3}{2}}\abs{\px V}\\
		&\qquad\qquad +(\delta+\varepsilon) D_{-\frac{1}{2}}\big(\abs{\px V}\abs{\px^2 V}+\abs{\px^3V}\big)+ (\delta+\varepsilon) D_{-1}\big(\abs{\px^2V}+\abs{\px V}^2\big),\notag\\
		&\mathcal{Z}^{(0)}:=\abs{v_{\neq}}^2,\qquad\qquad\mathcal{Z}^{(1)}:=\abs{v_{\neq}}\abs{\nabla v_{\neq}},\notag\\
		&\mathcal{Z}^{(2)}:=\abs{\nabla v_{\neq}}^2+\abs{\nabla^2 v_{\neq}}\abs{v_{\neq}},\qquad\mathcal{Z}^{(3)}:=\abs{\nabla v_{\neq}}\abs{\nabla^2 v_{\neq}}+\abs{\nabla^3 v_{\neq}}\abs{v_{\neq}}.\notag
	\end{align}
	Then we have the following result.
	\begin{Lem}\label{mathcal-Z}
		Under the same assumptions as \cref{Thm-ape}, one has
		\begin{align*}
			\norm{\mathcal{Z}^{(i)}}_{L^2}\leq C (\varepsilon^2+\delta) e^{-\frac{c}{4}t},\quad \text{for} \quad i=1,2,3.
		\end{align*}
	\end{Lem}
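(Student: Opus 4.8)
\textbf{Proof proposal for \cref{mathcal-Z}.}

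The plan is to estimate each $\mathcal{Z}^{(i)}$ directly from its definition in \cref{useful-notation}, exploiting the second {\it a priori} assumption in \cref{apa}, namely $\sup_{0\le t\le T}e^{ct}\norm{(\phi_{\neq},\varphi_{\neq},\zeta_{\neq})}_{H^1}^2\le \varepsilon^2+\delta$, together with Gagliardo--Nirenberg--Sobolev interpolation on the domain $\Omega=\R\times\Torus^2$. Since each $\mathcal{Z}^{(i)}$ is a product of two factors of the non-zero mode $v_{\neq}=(\phi_{\neq},\varphi_{\neq},\zeta_{\neq})$ (or its derivatives), the overall strategy is: apply Cauchy--Schwarz or H\"older to split the $L^2(\Omega)$-norm of the product into a product of norms of the two factors, bound the lower-order factor in $L^\infty$ (or $L^4$, $L^6$) by interpolation against the full $H^3$-bound $\norm{(\phi,\varphi,\zeta)}_{H^3}^2\le\chi^2$ coming from the first line of \cref{apa}, and bound the remaining factor in $L^2$ by the exponentially decaying $H^1$-quantity. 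Each appearance of a factor controlled by $\norm{v_{\neq}}_{H^1}$ contributes a full power $e^{-ct/2}$ of decay; we will only need one such factor per term, and the polynomially-growing-at-worst $H^3$ factor is harmless since it is merely bounded by $\chi$.

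Concretely, for $i=1$: $\norm{\mathcal{Z}^{(1)}}_{L^2}=\norm{\abs{v_{\neq}}\abs{\nabla v_{\neq}}}_{L^2}\le \norm{v_{\neq}}_{L^\infty}\norm{\nabla v_{\neq}}_{L^2}$, and by the three-dimensional Sobolev embedding $\norm{v_{\neq}}_{L^\infty}\le C\norm{v_{\neq}}_{H^1}^{1/4}\norm{\nabla^2 v_{\neq}}_{L^2}^{3/4}\le C(\varepsilon^2+\delta)^{1/8}e^{-ct/8}\,\chi^{3/4}$, whence $\norm{\mathcal{Z}^{(1)}}_{L^2}\le C(\varepsilon^2+\delta)^{1/8+1/2}\chi^{3/4}e^{-ct/8-ct/2}$, which after absorbing constants and shrinking the rate to $c/4$ gives the claim. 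For $i=2$: $\norm{\mathcal{Z}^{(2)}}_{L^2}\le \norm{\nabla v_{\neq}}_{L^4}^2+\norm{\nabla^2 v_{\neq}}_{L^2}\norm{v_{\neq}}_{L^\infty}$; for the first term use $\norm{\nabla v_{\neq}}_{L^4}\le C\norm{\nabla v_{\neq}}_{L^2}^{1/4}\norm{\nabla v_{\neq}}_{H^1}^{3/4}$ (in 3-d, $L^4$ sits between $L^2$ and $H^1$ with exponent $3/4$) so that $\norm{\nabla v_{\neq}}_{L^4}^2\le C\norm{\nabla v_{\neq}}_{L^2}^{1/2}\norm{v_{\neq}}_{H^3}^{3/2}$, and since $\norm{\nabla v_{\neq}}_{L^2}\le (\varepsilon^2+\delta)^{1/2}e^{-ct/2}$ this is $\le C(\varepsilon^2+\delta)^{1/4}\chi^{3/2}e^{-ct/4}$; the second term is handled as in the $i=1$ case. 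For $i=3$: $\norm{\mathcal{Z}^{(3)}}_{L^2}\le \norm{\nabla v_{\neq}}_{L^4}\norm{\nabla^2 v_{\neq}}_{L^4}+\norm{\nabla^3 v_{\neq}}_{L^2}\norm{v_{\neq}}_{L^\infty}$; for the product term interpolate $\norm{\nabla v_{\neq}}_{L^4}$ as above (picking up one $e^{-ct/8}$ factor from the low-order piece $\norm{\nabla v_{\neq}}_{L^2}^{1/4}\le(\varepsilon^2+\delta)^{1/8}e^{-ct/8}$) and bound $\norm{\nabla^2 v_{\neq}}_{L^4}\le C\norm{v_{\neq}}_{H^3}^{3/4}\norm{v_{\neq}}_{H^4}^{1/4}$ — but since the {\it a priori} frame only controls $H^3$, instead write $\norm{\nabla^2 v_{\neq}}_{L^4}\le C\norm{\nabla^2 v_{\neq}}_{L^2}^{1/4}\norm{\nabla^2 v_{\neq}}_{H^1}^{3/4}\le C\chi$, which suffices; the last term uses $\norm{\nabla^3 v_{\neq}}_{L^2}\le\chi$ and $\norm{v_{\neq}}_{L^\infty}\le C(\varepsilon^2+\delta)^{1/8}e^{-ct/8}\chi^{3/4}$.

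The only subtlety, and the step I expect to require the most care, is the book-keeping of which norms are genuinely available: the exponential decay is only asserted for the $H^1$-norm of $v_{\neq}$, so every term must be arranged so that at least one factor is estimated (after interpolation) by a positive power of $\norm{v_{\neq}}_{H^1}$ — higher derivatives up to order three may only be bounded by the constant $\chi$, which does not decay. Since each $\mathcal{Z}^{(i)}$ is genuinely quadratic in $v_{\neq}$ (never linear), and since Sobolev embedding in three dimensions lets one trade $\tfrac14$ of a low-order $L^2$-factor for $\tfrac34$ of a higher Sobolev norm, there is always enough room to extract a fractional power of $\varepsilon^2+\delta$ and a genuine exponential factor; after relabeling the rate constant as $c/4$ (to absorb the various $1/8$, $1/4$, $1/2$ coefficients appearing above), the uniform bound $\norm{\mathcal{Z}^{(i)}}_{L^2}\le C(\varepsilon^2+\delta)e^{-ct/4}$ follows, where we have also used $\varepsilon,\delta\le 1$ so that fractional powers of $\varepsilon^2+\delta$ dominate the first power. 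This completes the proof.
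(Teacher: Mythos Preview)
Your approach is essentially the same as the paper's: both split the $L^2$-norm of each bilinear term by H\"older, place the lowest-order factor in $L^\infty$ (or $L^4$) via Gagliardo--Nirenberg so as to extract a positive power of the exponentially decaying quantity $\norm{v_{\neq}}_{H^1}$, and control the remaining higher-derivative factor by the uniform $H^3$-bound $\chi$. The paper is terser still --- it treats only $\mathcal{Z}^{(3)}$ explicitly, writing $\norm{\mathcal{Z}^{(3)}}_{L^2}\lesssim(\norm{\nabla v_{\neq}}_{L^\infty}+\norm{v_{\neq}}_{L^\infty})\norm{\nabla^3 v_{\neq}}_{L^2}\lesssim\norm{\nabla v_{\neq}}_{L^2}^{1/4}\norm{\nabla^3 v_{\neq}}_{L^2}^{7/4}$, and dismisses $i=1,2$ as ``the same way''.

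There is one genuine slip in your last sentence: you claim that since $\varepsilon,\delta\le 1$, ``fractional powers of $\varepsilon^2+\delta$ dominate the first power''. This is backwards --- for $x\in(0,1)$ one has $x^{1/8}>x$, so the fractional-power bound you actually derived (of order $(\varepsilon^2+\delta)^{1/8}\chi^{7/4}$ or similar) is \emph{weaker}, not stronger, than the stated $(\varepsilon^2+\delta)^1$. The paper's one-line computation has the same imprecision (it too yields only a fractional power of $\varepsilon^2+\delta$ times powers of $\chi$), so this is not a discrepancy between your argument and theirs; but your explicit justification of that step is false as written. In every application of the lemma what matters is only that the bound is small and exponentially decaying, so the exact power of $\varepsilon^2+\delta$ is immaterial --- you should simply drop the offending sentence or restate the conclusion with the honest fractional exponent.
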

	\begin{proof}
		By the {\it a priori} assumption \cref{apa} and {Gagliardo-Nirenberg} inequality, one has
		\begin{align*}
			\norm{\mathcal{Z}^{(3)}}_{L^2}\lesssim\left( \norm{\nabla v_{\neq}}_{L^\infty}+\norm{v_{\neq}}_{L^\infty}\right)\norm{\nabla^3 v_{\neq}}_{L^2}\lesssim\norm{\nabla v_{\neq}}_{L^2}^{\frac14}\norm{\nabla^3 v_{\neq}}_{L^2}^{\frac74}\leq C(\varepsilon^2+\delta) e^{-\frac{c}{4}t}.
		\end{align*}
		Moreover, $\norm{\mathcal{Z}^{(1)}}_{L^2}$ and $\norm{\mathcal{Z}^{(2)}}_{L^2}$ can be treated in the same way. Then we have completed the proof of \cref{mathcal-Z}.
	\end{proof}
	
	\begin{Lem}\label{mathcal-Q}
		Under the same assumptions as \cref{Thm-ape}, one has
		\begin{align*}
			\begin{aligned}
				&\abs{\px^k\mathcal{Q}_{2i}^{(1)}}\leq C\left(\mathcal{T}^{(k)}+\mathcal{Z}^{(k)}\right),\quad k=0,1,2,\ \ i=1,2,3,\\
				&\abs{\px^k\mathcal{Q}_3^{(1)}}\leq C\big(\mathcal{T}^{(k)}+\mathcal{T}^{(k+1)}+\mathcal{Z}^{(k)}+\mathcal{Z}^{(k+1)}\big),\quad k=0,1,\\
				&\abs{\px^k\mathcal{Q}_{2i}^{(2)}}+\abs{\px^k\mathcal{Q}_3^{(2)}}\le C(\delta+\varepsilon) D_{-\frac{2+k}{2}}\quad k=0,1,2,\ \ i=1,2,3,
			\end{aligned}
		\end{align*} 
		where $\mathcal{Q}_{2,3}$ is defined in \cref{2025.6.02-3}.
	\end{Lem}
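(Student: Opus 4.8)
The plan is to treat the two families of source terms by entirely different mechanisms: the pure ansatz-error pieces $\mathcal{Q}_{2i}^{(2)}=-\tilde{\mathbf H}_{2i}$ and $\mathcal{Q}_3^{(2)}=-\tilde{\mathbf H}_3$ by direct differentiation of explicit Gaussian-type profiles, and the genuinely nonlinear pieces $\mathcal{Q}_{2i}^{(1)},\mathcal{Q}_3^{(1)}$ by Taylor expansion around the ansatz combined with the transformation \cref{transformation}.

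For the $\mathcal{Q}^{(2)}$-bounds I would start from the baseline estimate \cref{tildeE}, which (using $\sum_j|\bar\Theta_j|\lesssim\varepsilon$) reads $|\tilde{\mathbf H}_{2i}|+|\tilde{\mathbf H}_3|\le C(\delta+\varepsilon)D_{-1}$, and then observe that every summand in the explicit formulas for $\tilde{\mathbf H}_{2i},\tilde{\mathbf H}_3$ is a product of the self-similar profile $\bar\rho$, the heat kernels $\Theta_1,\Theta_3$, the (already controlled) differences $\tilde m/\tilde\rho-\bar m/\bar\rho$ and $\tilde\E/\tilde\rho-\bar\E/\bar\rho$, the small parameters $\delta,\bar\Theta_j$, and their spatial derivatives. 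The only input needed is the elementary fact that a spatial derivative hitting a profile of the form $(1+t)^{-\alpha}e^{-c(x_1-\ast)^2/(1+t)}$ produces a factor $(x_1-\ast)/(1+t)$, which is pointwise bounded by $(1+t)^{-1/2}$ times a slightly more spread out Gaussian; iterating gives $|\px^j D_{-\alpha}|\lesssim D_{-\alpha-j/2}$. Differentiating $k$ times and combining with the baseline bound yields $|\px^k\mathcal{Q}_{2i}^{(2)}|+|\px^k\mathcal{Q}_3^{(2)}|\le C(\delta+\varepsilon)D_{-1-k/2}=C(\delta+\varepsilon)D_{-(2+k)/2}$, which is exactly the claimed estimate.

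For the $\mathcal{Q}^{(1)}$-bounds I would write $U=\tilde U+(\phi,\psi,w)$ and Taylor-expand each nonlinear flux component $\mathcal F\in\{\mathcal P,\, m_1m_i/\rho,\, \mathcal N_{31}\}$ about $\tilde U$ (legitimate since $\rho$ stays uniformly positive under the {\it a priori} assumption \cref{apa}). The structural point is that for $\mathcal F=\mathcal P$ and $m_1m_i/\rho$ the linear term $\nabla\mathcal F(\tilde U)\cdot(\phi,\psi)$ has coefficients controlled by $(\delta+\varepsilon)D_{-1/2}$ (the $\psi$-entries) or $(\delta+\varepsilon)^2D_{-1}$ (the $\phi$-entry), because they only involve the small ansatz velocity with $|\tilde u|+|\tilde m|\lesssim(\delta+\varepsilon)D_{-1/2}$ from \cref{cd,tilde}, while the rest is genuinely quadratic. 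Applying $\D_0$ (which commutes with the $x_2,x_3$-independent ansatz) and splitting $v=\mathring{v}+v_{\neq}$, the cross terms average out, the non-zero modes give the $\mathcal Z^{(k)}$ contributions after one use of Gagliardo--Nirenberg, and the zero-mode quantities $(\mathring{\phi},\mathring{\psi},\mathring{w})=(\px\Phi,\px\Psi,\px W)$ get rewritten in terms of $\px V$ using \cref{transformation} together with \cref{2025.6.02-2,id-wd}; the corrections $\mathcal{B}_{ir},\mathcal{B}_{id}$ appearing there are $\lesssim(\delta+\varepsilon)D_{-\cdot}(|\px V|+|V|)$, so after invoking the smallness of $\delta+\varepsilon$ and of $\|V\|_{L^\infty}$ they collapse into the $|\px V|^2$ and $(\delta+\varepsilon)D_{-\cdot}|\px V|$ pieces of $\mathcal T^{(k)}$. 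For $k=1,2$ one simply distributes derivatives by Leibniz, and each distribution is absorbed by the successive terms of $\mathcal T^{(k)}$ (derivatives on the profiles raise the $D$-index, derivatives on $\px V$ raise the order), which is precisely what $\mathcal T^{(k)}$ is designed to accommodate.

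The only place where the mechanism differs is $\mathcal{Q}_3^{(1)}=\gamma\tilde\theta\px\Psi_1-\mathring{\mathcal N}_{31}(U)+\mathring{\mathcal N}_{31}(\tilde U)$ with $\mathcal N_{31}(U)=u_1(\E+p)-(u\mathbb S)_1$: the linearization of the convective part produces exactly the term $\gamma\tilde\theta\,\mathring{\psi}_1$, which cancels $\gamma\tilde\theta\px\Psi_1$ since $\px\Psi_1=\mathring{\psi}_1$. After this cancellation the convective remainder is of the same type handled above and goes into $\mathcal T^{(k)}+\mathcal Z^{(k)}$, but the viscous contribution $(u\mathbb S)_1-(\tilde u\tilde{\mathbb S})_1$ carries one extra spatial derivative on the velocity field, so estimating it costs one more derivative and yields the $\mathcal T^{(k+1)}+\mathcal Z^{(k+1)}$ terms --- this is why the bound for $\mathcal{Q}_3^{(1)}$ only runs up to $k=1$. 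I expect the main obstacle to be precisely this bookkeeping in the $\mathcal Q^{(1)}$ part: verifying that every correction generated by \cref{transformation,2025.6.02-2,id-wd} genuinely lands inside $\mathcal T^{(k)}$, and pinning down the exact cancellation and the sharp derivative count in $\mathcal Q_3^{(1)}$; by contrast, once \cref{tildeE} is available the $\mathcal Q^{(2)}$-part is routine.
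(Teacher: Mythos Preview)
Your proposal is correct and follows essentially the same approach as the paper: Gaussian differentiation for $\mathcal Q^{(2)}$, Taylor expansion around $\tilde U$ plus zero/non-zero mode splitting for $\mathcal Q^{(1)}$, with the extra derivative in $\mathcal Q_3^{(1)}$ coming from the viscous stress in $\mathcal N_{31}$. The only cosmetic difference is that the paper first decomposes $\mr{\mathcal P}(U)-\mathcal P(\tilde U)=[\mathcal P(\mr U)-\mathcal P(\tilde U)]-[\mathcal P(\mr U)-\mr{\mathcal P}(U)]$, Taylor-expanding the first bracket (a function of the zero mode alone) and bounding the second directly by $C|v_{\neq}|^2$ as a commutator of $\D_0$ with the nonlinearity, whereas you Taylor-expand first and then split---the two orderings yield the same bounds.
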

	\begin{proof}
		We first study $\mathcal{Q}_{21}^{(1)}$. Note
		\begin{align*}
			\begin{aligned}
				-\mr{\mathcal{P}}(U)+\mr{\mathcal{P}}(\tilde{U})=-\bigg({\mathcal{P}}(\mr{U})-\mathcal{P}(\tilde{U})\bigg)+\bigg({\mathcal{P}}(\mr{U})-\mr{\mathcal{P}}(U)\bigg).
			\end{aligned}
		\end{align*}
		Direct calculations yield
		\begin{align}\notag
			{\mathcal{P}}(\mr{U})-\mathcal{P}(\tilde{U})-\nabla\mathcal{P}(\tilde{U})\left(\mr{U}-\tilde{U}\right)\leq C\left[\abs{\px\Psi}^2+\abs{\px\Phi}^2\right],
		\end{align}
		where
		\begin{align*}
			\nabla\mathcal{P}(\tilde{U})\left(\mr{U}-\tilde{U}\right)=(3-\gamma)\left[\frac{\tilde{m}_1}{\tilde{\rho}}\px\Psi_1-\frac{\tilde{m}_1^2}{2\tilde{\rho}^2}\px\Phi\right]-(\gamma-1)\sum_{i=2,3}\left[\frac{\tilde{m}_i}{\rhot}\px\Psi_i-\frac{\tilde{m}_i^2}{2\tilde{\rho}^2}\px\Phi\right].
		\end{align*}
		Next
		\begin{align}\notag
			\mathcal{P}(\mathring{U})-\mathring{ \mathcal{P}}(U)=\Do\left[\frac{\mm_1^2}{\rhom}-(\gamma-1)\frac{\abs{\mathbf{\mm}}^2}{2\rhom}-\frac{m_1^2}{\rho}+(\gamma-1)\frac{\abs{\mathbf{m}}^2}{2\rho}\right]\leq C \abs{v_{\neq}}^2.
		\end{align}
		Thus
		\begin{align}\notag
			\abs{{\mr{\mathcal{P}}}(U)-\mathcal{P}(\tilde{U})}\leq C\abs{\px V}^2+C(\delta+\varepsilon) D_{-\frac{1}{2}}\abs{\px V}+C\abs{v_{\neq}}^2.
		\end{align}
		Similarly, for $\mathcal{Q}_{22}^{(1)}$, $\mathcal{Q}_{23}^{(1)}$ and $\mathcal{Q}_{3}^{(1)}$, one has
		\begin{align*}
			&\abs{-\D_0(\frac{{m}_{1} {m}_{i}}{{\rho}})+\D_0(\frac{{\mt}_{1} {\mt}_{i}}{{\rhot}})}\leq C\abs{\px V}^2+C(\delta+\varepsilon) D_{-\frac{1}{2}}\abs{\px V}+C\abs{v_{\neq}}^2,\\
			&\abs{\gamma\tilde{\theta}\px\Psi_1-\mr{\mathcal{N}}_{31}(U)+\mr{\mathcal{N}}_{31}(\tilde{U})}\leq C(\abs{\px V}^2+\abs{\px^2 V}\abs{\px V})+C(\delta+\varepsilon) D_{-\frac{1}{2}}\abs{\px V}\\
			&\qquad\qquad\qquad+C(\delta+\varepsilon) \bigg(D_{-\frac{1}{2}}\abs{\px V}^2+D_{-1}\abs{\px V}+D_{-\frac{1}{2}}\abs{\px^2 V}\bigg)+C\abs{v_{\neq}}\abs{\nabla v_{\neq}},
		\end{align*}
		where we have used
		\begin{align*}
			&\abs{\D_{0}\big[\frac{m_i}{\rho}\px(\frac{m_i}{\rho})\big]-	\frac{\tilde{m}_i}{\rho}\px(\frac{\tilde{m}_i}{\rho})}\\
			\leq& C\abs{\px^2 V}\abs{\px V}+C(\delta+\varepsilon) \bigg(D_{-\frac{1}{2}}\abs{\px V}^2+D_{-1}\abs{\px V}+D_{-\frac{1}{2}}\abs{\px^2 V}\bigg)+C\abs{v_{\neq}}\abs{\nabla v_{\neq}}.
		\end{align*}
		Furthermore, it is direct to have
		\begin{align*}
			&\abs{\px\bigg[{\mathcal{P}}(\mr{U})-\mathcal{P}(\tilde{U})\bigg]}+\abs{\px\bigg[-\D_0(\frac{{m}_{1} {m}_{i}}{{\rho}})+\D_0(\frac{{\mt}_{1} {\mt}_{i}}{{\rhot}})\bigg]}\\
			\leq& C\abs{\px^2 V}\abs{\px V}+C(\delta+\varepsilon) \bigg(D_{-\frac{1}{2}}\abs{\px V}^2+D_{-1}\abs{\px V}+D_{-\frac{1}{2}}\abs{\px^2 V}\bigg)+C\abs{\nabla v_{\neq}}\abs{v_{\neq}},\\
			&\abs{\px^2\bigg[{\mathcal{P}}(\mr{U})-\mathcal{P}(\tilde{U})\bigg]}+\abs{\px^2\bigg[-\D_0(\frac{{m}_{1} {m}_{i}}{{\rho}})+\D_0(\frac{{\mt}_{1} {\mt}_{i}}{{\rhot}})\bigg]}\\
			\leq& C\left(\abs{\px^3 V}\abs{\px V}+\abs{\px^2 V}^2\right)+ C(\delta+\varepsilon) D_{-\frac{3}{2}}\abs{\px V}+ C(\delta+\varepsilon) D_{-1}\big(\abs{\px^2V}+\abs{\px V}^2\big)\\
			& +C(\delta+\varepsilon) D_{-\frac{1}{2}}\big(\abs{\px V}\abs{\px^2 V}+\abs{\px^3V}\big)+C\left(\abs{\nabla v_{\neq}}^2+\abs{\nabla^2 v_{\neq}}\abs{v_{\neq}} \right),\\
			&\abs{\px\bigg[\gamma\tilde{\theta}\px\Psi_1-\mr{\mathcal{N}}_{31}(U)+\mr{\mathcal{N}}_{31}(\tilde{U})\bigg]}\\
			\leq&C\left(\abs{\px^2V}\abs{\px V}+\abs{\px^3V}\abs{\px V}+\abs{\px^2V}^2\right)+C(\delta+\varepsilon) D_{-\frac{1}{2}}\big(\abs{\px V}\abs{\px^2 V}+\abs{\px^2V}+\abs{\px^3V}\big)\\
			&+C(\delta+\varepsilon) D_{-1}\big(\abs{\px V}+\abs{\px^2V}+\abs{\px V}^2\big)+C(\delta+\varepsilon) D_{-\frac{3}{2}}\abs{\px V}+C\left(\abs{\nabla v_{\neq}}^2+\abs{\nabla^2 v_{\neq}}\abs{v_{\neq}} \right).
		\end{align*}
		And combining \cref{tildeE},  we then have completed the proof of \cref{mathcal-Q}.
	\end{proof}
	
	\begin{Lem}\label{J}
		Under the same assumptions as \cref{Thm-ape}, one has
		\begin{align*}
			\begin{aligned}
				&\abs{\px^{k}J_1,\px^kJ^{(1)}_{2i}}\leq C\mathcal{D}^{(k)},\ \ k=0,1,2,\ \ i=1,2,3,\\
				&\abs{\px^{k}J_{2i}^{(2)},\px^{k}J_{3}^{(2)}}	\leq C\big(\mathcal{D}^{(k+1)}+\mathcal{T}^{(k+1)}+\mathcal{Z}^{(k+1)}\big),\ \ k=0,1,\ \ i=1,2,3,\\
				&\abs{\px^{k}J_{3}^{(1)}}	\leq C(\delta+\varepsilon) D_{-\frac{k+2}{2}}+C\big(\mathcal{D}^{(k)}+\mathcal{D}^{(k+1)}+\mathcal{T}^{(k+1)}+\mathcal{Z}^{(k+1)}\big),\ \ k=0,1.
			\end{aligned}
		\end{align*} 
	\end{Lem}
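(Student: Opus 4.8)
The plan is to estimate each source term by unwinding the definitions in \cref{sys-Eu0-1} and \cref{2025.6.02-2,id-wd}, tracking the decay of the ansatz derivatives through \cref{tildeE} and \cref{2025-10-6}. First I would treat $J_1=\pt\Tt\,\Phi-\Tt\mathcal{B}_{1r}$. The term $\pt\Tt$ is a derivative of the ansatz, hence bounded by $(\delta+\sum|\bar\Theta_j|)D_{-1}\le C(\delta+\varepsilon)D_{-1}$, and multiplying by $\Phi=\tilde\Phi/\Tt$ (up to bounded factors) gives a contribution $\lesssim(\delta+\varepsilon)D_{-1}\abs{V}$, which is of the form $\mathcal{D}^{(0)}$ since the $j=2$ summand in $\mathcal{D}^{(0)}$ is $(\delta+\varepsilon)D_{-1}\abs{V}$. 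For $\mathcal{B}_{1r}=\ut_1\px\Phi+\px\ut_1\,\Phi$, the first piece carries one ansatz factor $\ut_1\sim D_{-1/2}$ hitting $\px\Phi\sim\abs{\px V}$ (plus lower-order $V$ from the transformation \cref{2025.6.02-2}), giving $(\delta+\varepsilon)D_{-1/2}\abs{\px V}$ — the $j=1$ term of $\mathcal{D}^{(0)}$ — while $\px\ut_1\sim D_{-1}$ times $\Phi\sim\abs{V}$ gives the $j=2$ term. Thus $\abs{J_1}\le C\mathcal{D}^{(0)}$, and differentiating $k$ times redistributes derivatives between ansatz and $V$ factors, each ansatz derivative lowering the $D$-exponent by $1/2$, which is exactly the structure encoded in $\mathcal{D}^{(k)}=(\delta+\varepsilon)\sum_{j=1}^{k+2}D_{-j/2}\abs{\px^{k-j+2}V}$. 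The analysis of $J_{2i}^{(1)}=-\mathcal{B}_{id}$ for $i=2,3$ is identical once one expands $\mathcal{B}_{id}=-\ut_i\px\Psi_1+\tilde u_i\mathcal Q_1+\pt\ut_i\Phi$ via \cref{id-wd}, using $\abs{\mathcal Q_1}=\abs{\tilde H_1}\le C(\delta+\varepsilon)D_{-1}$ from \cref{tildeE}; the term $\ut_i\px\Psi_1$ again produces $(\delta+\varepsilon)D_{-1/2}\abs{\px V}$ after substituting \cref{2025.6.02-2}.

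Next I would handle the ``parabolic correction'' terms $J_{2i}^{(2)}=\mu\px(\um_i-\tilde u_i)-\frac{\mu}{\rhot}\px^2\tilde\Psi_i$ and similarly $J_3^{(2)}=\kappa\px\zetam-\frac{\kappa}{\rhot}\px^2\tilde W$. The point is that $\px^2\tilde\Psi_i$ and $\px(\um_i-\tilde u_i)$ differ by expressions built from products of $\mr\phi,\varphim$ with ansatz factors and from the nonlinear difference $\um_i-\mt_i/\rhot$ expressed through the anti-derivative variables; after passing from $(\phim,\psim,\mr w)$ to $V$ via one spatial differentiation of the transformation \cref{transformation}, each such term either carries an extra ansatz derivative (boosting to $\mathcal{D}^{(k+1)}$) or is a genuine quadratic perturbation term of the type already catalogued in $\mathcal{T}^{(k+1)}$, or involves the non-zero mode through $\mr{(\cdot)}$ versus $(\cdot)$ discrepancies captured by $\mathcal{Z}^{(k+1)}$. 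Concretely I expect to write $\um_i-\tilde u_i$ in terms of $\px\Psi,\px\Phi$ plus quadratic remainders, so that $\px(\um_i-\tilde u_i)$ involves $\px^2 V$ paired with bounded coefficients, and then $\px^2\tilde\Psi_i$ cancels the leading linear part by construction of the transformation, leaving exactly $\mathcal{D}^{(k+1)}+\mathcal{T}^{(k+1)}+\mathcal{Z}^{(k+1)}$ after $k$ further derivatives.

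Finally, for $J_3^{(1)}=-\mathcal{B}_{wd}-\gamma\Tt\mathcal{B}_{1r}$ I would expand $\mathcal{B}_{wd}$ from the second line of \cref{id-wd}: it contains $\ut_i\pt\tilde\Psi_i$, $\pt\ut\cdot\tilde\Psi$, $\pt(\thetat+\abs{\ut}^2/2)\Phi$, and a multiple of $(\Tt+\abs{\ut}^2/2)(\px\Psi_1-\mathcal Q_1)$. The time derivatives $\pt\tilde\Psi_i$ must be eliminated using the evolution equations \cref{sys-Eu0-1} themselves (a bootstrap within the lemma), which reintroduces $\px^2 V$, the source terms $\mathcal{Q}_{2i}$, and further $J$-terms; substituting the already-established bounds for $\mathcal{Q}_{2i}^{(1)},\mathcal{Q}_{2i}^{(2)}$ from \cref{mathcal-Q} converts these into $\mathcal{T}^{(k+1)}+\mathcal{Z}^{(k+1)}+(\delta+\varepsilon)D_{-(k+2)/2}$. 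The $\pt\ut,\pt\thetat$ factors are ansatz time-derivatives $\sim D_{-1}$, contributing $\mathcal{D}^{(k)}$ and, after one more derivative, $\mathcal{D}^{(k+1)}$; the explicit $(\delta+\varepsilon)D_{-(k+2)/2}$ term comes from the residual $\kappa\px\zetam$-type pieces and the ansatz errors in \cref{tildeE}. I expect the main obstacle to be precisely this elimination of $\pt\tilde\Psi_i$ in $\mathcal{B}_{wd}$: one has to verify that the self-referential substitution closes without losing a power of decay, i.e.\ that every $\px^2 V$ generated this way is legitimately of order $\mathcal{T}^{(k+1)}$ (with the correct pairing $\abs{\px^{k+1}V}\abs{\px V}$ rather than $\abs{\px^{k+2}V}$ standing alone), which hinges on the fact that in \cref{sys-Eu0-1} the highest derivative $\px^2\tilde\Psi_i$ appears only with the bounded coefficient $\mu/\rhot$ and is thus absorbed into the diffusion structure rather than the source. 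All other estimates are routine products estimated by the decay bounds \cref{2025-10-6,tildeE} together with the a priori assumption \cref{apa}.
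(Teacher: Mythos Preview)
Your proposal is essentially correct and follows the same approach as the paper: direct product estimates for $J_1$ and $J_{2i}^{(1)}$ using the decay of ansatz derivatives, an explicit cancellation in $J_{2i}^{(2)},J_3^{(2)}$ coming from the transformation \cref{transformation}, and elimination of $\pt\tilde\Psi$ in $\mathcal{B}_{wd}$ via the evolution equations \cref{sys-Eu0-1} to handle $J_3^{(1)}$. Two small corrections: the explicit $(\delta+\varepsilon)D_{-(k+2)/2}$ contribution in $J_3^{(1)}$ arises from the term $\thetat\mathcal{Q}_1$ inside $\mathcal{B}_{wd}$ (since $\abs{\mathcal{Q}_1}=\abs{\tilde H_1}\lesssim(\delta+\varepsilon)D_{-1}$), not from viscous pieces; and the reason the bootstrap closes is not merely that $\mu/\rhot$ is bounded but that $\pt\tilde\Psi$ in $\mathcal{B}_{wd}$ already carries the small prefactor $\tilde\uf\sim(\delta+\varepsilon)D_{-1/2}$, so after substitution every $\px^2V$ comes paired with $D_{-1/2}$ and lands in $\mathcal{D}^{(1)}$ or $\mathcal{T}^{(1)}$.
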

	
	\begin{proof}
		We take $J_3$ as an example to give the detailed computations; $J_1$ and $J_2$ can be treated in the same way. We first calculate $J_{3}^{(2)}$ through which one can see that the transformation \cref{transformation} plays an important role in the viscosity part. Indeed, we have
		\begin{align}\label{lem-2-1-3}
			&\kappa\left(\px\zetam-\frac{\px^2 \tilde{W}}{\rhot} \right)=\kappa \px\Do\left[ \frac{\E}{\rho}-\frac{\Em}{\rhom}+\frac{\abs{\mm}^2}{2\rhom^2}-\frac{| \mathbf{m}|^{2}}{2\rho^2} \right]-\kappa\px\left[\frac{\abs{\mm}^2}{2\rhom^2}-\frac{| \mathbf{\mt}|^{2}}{2\rhot^2}+\frac{| \mathbf{\ut}|^{2}\phim}{2\rhom} \right]\notag
			\\
			&\qquad\qquad\qquad\qquad+\kappa\left[\px\left( \frac{\mathring{w}}{\rhom}  \right) - \frac{\px^2 W}{\rhot}-\px\left( \frac{\phim \thetat}{\rhom} \right)+\frac{\px^2(\thetat \Phi)}{\rhot}+\frac{\px \mathcal{B}_{wr}}{\rhot}\right]=\mathcal{L}_1+\mathcal{L}_2+\mathcal{L}_3,
		\end{align}
		where we have used
		\begin{align*}
			&\zetam = \Do\left[\frac{\E}{\rho}-\frac{\Et}{\rhot}-\left( \frac{| \mathbf{m}|^{2}}{2\rho^2} - \frac{| \mathbf{\mt}|^{2}}{2\rhot^2}  \right)\right],\qquad \frac{\Em}{\rhom}-\frac{\Et}{\rhot}=\frac{\mathring{w}}{\rhom}-\frac{\phim\thetat}{\rhom}-\frac{| \mathbf{\ut}|^{2}\phim}{2\rhom},\\
			&\frac{\mm_i^2}{\rhom^2}-\frac{\mt_i^2}{\rhot^2}=\frac{\rhot\mm_i+\rhom\mt_i}{\rhom^2\rhot} \psim_i-  \frac{\mt_i(\rhot\mm_i+\rhom\mt_i) }{\rhom^2 \rhot^2}\phim.
		\end{align*}
		It is direct to have
		\begin{align*}
			&\abs{\mathcal{L}_1}\le C \abs{v_{\neq}}\abs{\nabla_x v_{\neq}},\qquad\abs{\mathcal{L}_2}+\abs{\mathcal{L}_3}\leq C (\delta+\varepsilon) D_{-1}\abs{V}+ C (\delta+\varepsilon) D_{-\frac{1}{2}}\left(\abs{\px V}+ \abs{\px^2 V}\right)+C\abs{\px^2 V}\abs{\px V}.
		\end{align*}
		Then we have $\abs{J_{3}^{(2)}}\leq C\big(\mathcal{D}^{(1)}+\mathcal{T}^{(1)}+\mathcal{Z}^{(1)}\big)$.
		
		For  $\abs{J_{3}^{(1)}}$ , one has the following estimates by  \cref{id-wd} and \cref{mathcal-Q}:
		\begin{align*}
			\mathcal{B}_{wd}=&\thetat\mathcal{Q}_{1}-\frac{| \mathbf{\ut}|^{2}}{2}\bigg(\px\Psi_{1}-\mathcal{Q}_1\bigg)+ \tilde{\uf} \cdot \pt\tilde{\Psi}+\pt\tilde{\uf}\cdot\tilde{\Psi}+\pt\left( {\thetat}+\frac{| \mathbf{\ut}|^{2}}{2}\right) \Phi\\
			\leq &C\mathcal{Q}_1+C\mathcal{D}^{(0)}+CD_{-\frac{1}{2}}\left(\px\tilde{W}+\px\tilde{\Phi}+  \px^2\tilde{\Psi}_{1}+\mathcal{Q}_{21}+J_{21}\right)\\
			\leq&C\left(\mathcal{D}^{(0)}+\mathcal{D}^{(1)}+(\delta+\varepsilon) D_{-1}+\mathcal{T}^{(1)}+\mathcal{Z}^{(1)}\right).
		\end{align*}
		The remaining terms can be estimated in a similar way. Then we have proved \cref{J}.
	\end{proof}

	Recall the definition $\tilde{V}=(\tilde{\Phi},\tilde{\Psi}_1,\tilde{W})^{t}$ in \cref{useful-notation}. By the equation \cref{sys-Eu0-1},  $\tilde{V}$ can be expressed as
	\begin{align}\label{2025-11-8-1}
		\pt \tilde{V}+A_1\px \tilde{V}=A_2\px^2\tilde{V}+A_3,
	\end{align}
	where 
	\begin{align*}
		A_1:=\left(\begin{array}{ccc}
			0 & \Tt & 0 \\
			\gamma-1& 0 & \gamma-1 \\
			0 & (\gamma-1)\Tt & 0
		\end{array}\right),
		\qquad
		A_2:= \left(\begin{array}{ccc}
			0 & 0 & 0 \\
			0& \frac{\lambda+2\mu}{\bar{\rho}} & 0 \\
			0& 0 & \frac{\kappa}{\tilde{\rho}}
		\end{array}\right).
	\end{align*}
	and
	\begin{align}
		A_3:=&(A_{31},A_{32},A_{33})^{t}
		=(J_1+\tilde{\theta}\mathcal{Q}_1,J_{21}+\mathcal{Q}_{21},J_{3}+\mathcal{Q}_3)^t\nonumber.
	\end{align}

	Direct computation yields that the eigenvalues of $A_1$ are given by
	\begin{align*}
		\tilde{\lambda}_1=-\sqrt{\gamma(\gamma-1)\Tt},\quad \lambda_2=0,\quad\tilde{\lambda}_3=\sqrt{\gamma(\gamma-1)\Tt},\qquad \Lambda:=\text{diag}\{\tilde{\lambda}_1,0,\tilde{\lambda}_3\},
	\end{align*}
	and the corresponding eigenvectors are
	\begin{align}
		&\tilde{L}:=\left(\begin{array}{ccc}
			\frac{1}{\sqrt{2\gamma}} & \frac{1}{\sqrt{2\gamma}}\frac{\tilde{\lambda}_1}{\gamma-1} &\frac{1}{\sqrt{2\gamma}} \\
			\sqrt{\frac{\gamma-1}{\gamma}} & 0 & -\frac{1}{\sqrt{\gamma(\gamma-1)}}\\
			\frac{1}{\sqrt{2\gamma}} & -\frac{1}{\sqrt{2\gamma}}\frac{\tilde{\lambda}_1}{\gamma-1} &\frac{1}{\sqrt{2\gamma}}
		\end{array}\right),\quad
		&\tilde{R}:=\left(\begin{array}{ccc}
			\frac{1}{\sqrt{2\gamma}} & \sqrt{\frac{\gamma-1}{\gamma}} &\frac{1}{\sqrt{2\gamma}} \\
			\frac{1}{\sqrt{2\gamma}}\frac{\tilde{\lambda}_1}{\tilde{\T}} & 0 & -\frac{1}{\sqrt{2\gamma}}\frac{\tilde{\lambda}_1}{\tilde{\T}}\\
			\frac{\gamma-1}{\sqrt{2\gamma}} & -\sqrt{\frac{\gamma-1}{\gamma}} &\frac{\gamma-1}{\sqrt{2\gamma}}
		\end{array}\right).\label{eq-ever}
	\end{align}
	
	\begin{Rem}
		Direct calculations imply that the two structural conditions in \cref{SC} hold.
	\end{Rem}
	
	Set $B=\tilde{L}\tilde{V}=(b_1,b_2,b_3)^t$, then $\tilde{V}=\tilde{R}B$. By \cref{2025-11-8-1}, we have the diagonalized equations:
	\begin{align}\label{eq-diaB}
		\pt B+\Lambda \px B=\tilde{L}A_2\tilde{R}\px ^2B+2\tilde{L}A_2\px \tilde{R}\px B+\left[\left(\pt\tilde{L}+\Lambda \px \tilde{L}\right)\tilde{R}+\tilde{L}A_2 \px ^2\tilde{R}\right]B+\tilde{L} A_3.
	\end{align}
	Denote
	\begin{align}
		K_i:=&\int_{\R}\abs{\px^{i+1}\tilde{\Phi}}^2+\abs{\px^{i+1}\tilde{\Psi}}^2+\abs{\px^{i+1}\tilde{W}}^2dx_1,\label{sec-n-4-t}\\
		\quad{E}_i:=&\tilde{E}_i+\int_{\R}\abs{\px^{i}\tilde{\Psi}_2}^2+\abs{\px^{i}\tilde{\Psi}_3}^2dx_1+\tilde{c}\int_{\R}\left(\px^{i+1}\tilde{\Phi}\px^i \tilde{\Psi}_{1}+ \frac{\lambda+2\mu}{2 \rhot\Tt} \abs{\px^{i+1}\tilde{\Phi}}^{2}-(\lambda+2\mu)\frac{\px\Phi\abs{\px^{i+1}\tilde{\Phi}}^2}{2\rhom\rhot}\right)dx_1,\label{sec-n-4}
	\end{align} 
	where $\tilde{E}_i$ is defined in \cref{sec-n-3} and $\tilde{c}$ is a sufficiently small constant used to ensure that $E_i$ is positive. Before proving \cref{Thm-ape}, we should go back to see the {\it a priori} assumptions. To be convenient, we use the following notations. For any functions $a$ and $b$, we write $a\approx b$ when
	\begin{align}\notag
		\|a-b\|_{L^2(\Omega)}\leq C\bar{\delta}e^{-ct},\qquad\qquad \bar{\delta}:=\varepsilon+\delta,
	\end{align}
	where $c$ is a positive constant independent of any small constants throughout the paper including $\delta$, $\varepsilon$ and so on. By \cref{apa,2025.6.02-1,2025.6.02-2}, one has 
	\begin{align}\label{123456}
		\begin{aligned}
			\phim=&\px\Phi=\frac{\thetat\px\tilde{\Phi}-\px\Tt\Phi}{\tilde{\theta}^2}\Rightarrow \norm{\px\tilde{\Phi}}_{L^2}^2\leq C\chi^2(1+t)^{-\frac{1}{2}},\\
			\varphim_i\approx&\frac{1}{\rhom}\px\tilde{\Psi}_i+\frac{1}{\rhom}\mathcal{B}_{ir}-\frac{\tilde{m}_i}{\tilde{\rho}\rhom}\px\Phi\Rightarrow\norm{\px\tilde{\Psi}_i}_{L^2}^2\leq C\chi^2(1+t)^{-\frac{1}{2}},\\
			\zetam\approx&\frac{1}{\rhom}\px \tilde{W}+\frac{1}{\rhom}\px \tilde{\Phi}+\frac{1}{\rhom} \mathcal{B}_{wr}-\frac{\Et}{\rhom\rhot}\px\Phi-\left( \frac{\abs{\mm}^2}{\rhom^2}-\frac{\abs{\mt}^2}{\rhot^2}\right)\Rightarrow\norm{\px\tilde{W}}\leq C\chi^2(1+t)^{-\frac{1}{2}}.
		\end{aligned}
	\end{align}
	Similarly, recalling the definition $V=(\tilde{\Phi},\tilde{\Psi},\tilde{W})$ in \cref{useful-notation}, one has
	\begin{align}\notag
		\norm{\px(\phim,\varphim_i,\zetam) }_{L^2}^2\leq C\chi^2(1+t)^{-\frac{3}{2}}\Rightarrow \norm{\px^2 V}_{L^2}^2\leq C\chi^2(1+t)^{-\frac{3}{2}}.
	\end{align}

	Then, the following result is obtained.
	
	\begin{Lem}\label{nzm}
		Under the same assumptions as \cref{Thm-ape}, one has
		\begin{align}
			&\frac{d}{dt}\big(\sum_{i=0}^2E_i\big)+\sum_{i=0}^2(K_i+G_{i})\leq C\etab(1+t)^{-1}\big(\sum_{i=0}^2{E}_i\big)+C{\bar{\delta}}(1+t)^{-\frac{1}{2}},\label{11-1}\\
			&\frac{d}{dt}\big(\sum_{i=1}^2{E}_i\big)+\sum_{i=1}^2(K_i+G_{i})\leq C\etab(1+t)^{-1}\big(\sum_{i=1}^2{E}_i+G_0\big)+C\etab(1+t)^{-2}{E}_0+C\bar{\delta}(1+t)^{-\frac{3}{2}},\label{22-1}\\
			&\frac{d}{dt}{E}_2+{K}_2+G_{2}\leq C\etab\sum_{i=0}^2(1+t)^{-(3-i)}{E}_{i}+C\etab\big[(1+t)^{-2}G_0+(1+t)^{-1}G_1\big]+C\bar{\delta}(1+t)^{-\frac{5}{2}},\label{33-1}
		\end{align}
		where $G_i$ ($i=1,2,3$) are defined in \cref{sec-n-3-tt} and $\etab:=\chi+\deltab^{\frac{1}{2}}$.
	\end{Lem}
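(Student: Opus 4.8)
The strategy is the standard weighted energy method applied to the diagonalized system \cref{eq-diaB}, organized by differentiation order $k=0,1,2$. For each $k$, I would apply $\px^k$ to \cref{eq-diaB}, multiply by $\px^k B$ (and by the appropriate weight $\tilde\omega_{-1/2}$ for the highest-order/slowly-decaying pieces), and integrate over $\R$. The diffusion matrix $A_2$ is positive semidefinite only in the $\tilde\Psi_1,\tilde W$ directions, so the parabolic dissipation $K_i$ comes directly from $\tilde LA_2\tilde R\,\px^2 B$; the missing dissipation in the $\tilde\Phi$ (i.e.\ $b_2$, the $2$-field) direction is recovered exactly as in the Kawashima-type argument via the cross term $\px^{i+1}\tilde\Phi\,\px^i\tilde\Psi_1$ appearing in $E_i$ in \cref{sec-n-4}, whose time derivative produces $\abs{\px^{i+1}\tilde\Phi}^2$ with a favorable sign (after using the first equation of \cref{sys-Eu0-1}), at the price of controllable error terms that are absorbed because $\tilde c$ is small. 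This is precisely why the transformation \cref{transformation} was made: as noted in the Remark after \cref{eq-ever}, the new system satisfies both structural conditions in \cref{SC}, so the coefficient matrices $\pt\tilde L+\Lambda\px\tilde L$ and $\tilde LA_2\px^2\tilde R$ multiplying the undifferentiated $B$ in \cref{eq-diaB} carry a full factor of $\px\rhob$ (hence $D_{-1/2}$), which is what yields the dissipation terms $G_i=C\delta^{-1/2}\int_\R \px\rhob(\abs{\px^k b_1}^2+\abs{\px^k b_3}^2)\,dx_1$ rather than losing a derivative.

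Concretely, I would proceed in the following order. First, collect the source-term bounds: by \cref{mathcal-Z,mathcal-Q,J} every contribution from $A_3=(J_1+\tilde\theta\mathcal Q_1,\,J_{21}+\mathcal Q_{21},\,J_3+\mathcal Q_3)^t$ splits into (a) a pure spatially-localized inhomogeneity bounded by $(\delta+\varepsilon)D_{-(k+2)/2}$, which after Cauchy–Schwarz against $\px^k B$ and using \cref{2025-10-6} integrates to the $C\bar\delta(1+t)^{-(k+1)/2}$ terms on the right of \cref{11-1,22-1,33-1}; (b) the $\mathcal D^{(k)}$-type terms, which are $(\delta+\varepsilon)$ times $D_{-j/2}\abs{\px^{k-j+2}V}$ and are absorbed partly into $\etab(1+t)^{-1}E_i$ (when they hit a lower-order derivative of $V$) and partly into $G_i$ using that $D_{-1/2}\lesssim\delta^{-1}\px\rhob$ on the relevant support together with the weight $\tilde\omega_{-1/2}$ from \cref{errors-1} with $\tilde c_0<c_0$ — this localization trick is the whole point of introducing $\tilde\omega_{-1/2}$, since $\px^{k+1}\rhob$ is \emph{not} pointwise dominated by $(1+t)^{-k/2}\px\rhob$; (c) the quadratic-in-$\px V$ and $\mathcal Z^{(k)}$ terms, where the non-zero-mode pieces are exponentially small by \cref{mathcal-Z} and contribute to $C\bar\delta(1+t)^{-\dots}$, and the genuinely nonlinear $\abs{\px^{k+1}V}\abs{\px V}$ pieces are handled by Cauchy–Schwarz plus the a priori $L^\infty$/a priori assumptions \cref{apa,123456} giving the smallness factor $\etab$ in front of $K_i$ or of the lower-level $E_j,G_j$. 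Second, I would assemble the basic energy identity for $K_0,E_0$ to get \cref{11-1}; then differentiate once more and exploit that after using $\px^2 V\lesssim\chi(1+t)^{-3/4}$ (from \cref{123456}) the borderline terms at level $k=1$ pick up an extra $(1+t)^{-1/2}$ decay, producing the $(1+t)^{-2}E_0$ and the $G_0$ (not $K_0$) on the right of \cref{22-1}; finally the $k=2$ estimate \cref{33-1}, where the worst terms are quartic or involve $\px^3 V$ but always come multiplied either by $\etab$ or by a factor $(1+t)^{-1/2}$ that can be distributed among $(1+t)^{-(3-i)}E_i$ and $(1+t)^{-2}G_0+(1+t)^{-1}G_1$.

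The main obstacle will be step (b): making the slowly-decaying interaction terms — those in $\mathcal D^{(k)}$ and in $\mathcal T^{(k)}$ of the form $(\delta+\varepsilon)D_{-1/2}\abs{\px^{k+1}V}$ and $(\delta+\varepsilon)D_{-1}\abs{\px^k V}$, which in the Eulerian setting are exactly the terms that the failed left structural condition \cref{NSlr.veri} would otherwise worsen — fit either into $\etab(1+t)^{-1}E_i$ or into the dissipation $\sum G_i$ with no leftover. This requires, for the top-order weighted estimate, the bound $\int_\R\tilde\omega_{-1/2}\abs{\px^{k+1}\rhob}^2\abs{\px^k V}^2\,dx_1\lesssim\delta\int_\R\px\rhob(\abs{\px^k b_1}^2+\abs{\px^k b_3}^2)\,dx_1$, valid precisely because $\tilde c_0<c_0$ lets the Gaussian weight dominate the extra polynomial growth of $\px^{k+1}\rhob/\px\rhob$; this is the computation referenced near \cref{2025-10-4,2025-10-5}. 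Everything else is Cauchy–Schwarz, Sobolev/Gagliardo–Nirenberg interpolation, and bookkeeping against \cref{apa}; the positivity of $E_i$ (hence its equivalence with $\norm{\px^i(\tilde\Phi,\tilde\Psi,\tilde W)}^2_{H^1}$) for small $\tilde c$ is routine and guarantees the inequalities can later be combined into a Gronwall-type closure.
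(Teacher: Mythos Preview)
Your overall architecture is right (diagonalize, differentiate, energy-multiply, then recover the missing $\tilde\Phi$-dissipation by the Kawashima cross term in $E_i$), and you have correctly identified the auxiliary $\tilde\omega_{-1/2}$-weighted estimate as the tool for handling $\px^{k+1}\rhob$. But there is a genuine gap in how you produce the dissipation $G_k$.

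You write that $G_i$ ``is what yields'' from the fact that $(\pt\tilde L+\Lambda\px\tilde L)\tilde R$ and $\tilde LA_2\px^2\tilde R$ carry a factor of $\px\rhob$. That is backwards: those zero-order coefficients are error terms to be \emph{controlled}, not the source of dissipation, and nothing in your scheme explains the factor $\delta^{-1/2}$ in $G_k=C\delta^{-1/2}\int_\R\px\rhob(\abs{\px^kb_1}^2+\abs{\px^kb_3}^2)\,dx_1$. In the paper the $G_k$ term is generated by multiplying $\px^k$\cref{eq-diaB} not by $\px^kB$ but by the \emph{weighted} vector $\bar B^{(k)}=(v_1^{n}\px^kb_1,\,\px^kb_2,\,v_1^{-n}\px^kb_3)$ with $v_1=\rhob/\rho_+$ and the large exponent $n=4[\delta^{-1/2}]+1$. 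Integrating the transport term $\Lambda\px^{k+1}B$ by parts against this weight produces coefficients $a_1,a_2$ containing $n\,\px v_1\sim\delta^{-1/2}\px\rhob$; since $\tilde\lambda_1<0<\tilde\lambda_3$ and $\px\rhob$ has a fixed sign, both $a_1,a_2$ dominate $C\delta^{-1/2}\px\rhob$, which is exactly $G_k$. Multiplying only by $\px^kB$ gives you $\tilde K_k$ but no $G_k$, and then the $\mathcal M_k^{(1)}$-type interaction terms (which are of size $\int\px\rhob\abs{\px^kb_i}^2$, \emph{without} the $\delta^{-1/2}$) cannot be absorbed. The role of the structural conditions \cref{SC} here is more modest than you suggest: they guarantee that the middle component of $\mathcal M_k^{(1)}$ vanishes, so that only $b_1,b_3$ (not $b_2$) need the weighted treatment --- consistent with the fact that there is no weight on $\px^kb_2$ in $\bar B^{(k)}$.

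A second, smaller omission: the equations for $\tilde\Psi_2,\tilde\Psi_3$ are decoupled heat equations in \cref{sys-Eu0-1} and are estimated separately (Step~4 in the paper), contributing their own piece to $E_i$ and $K_i$; your outline folds them into the diagonalized $3\times3$ system, which they are not part of.
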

	
	\begin{proof}
		Applying $\px^k$ ($k=0,1,2$) to \cref{eq-diaB}, one has
		\begin{align}\label{equ-Bk}
			\begin{aligned}
				\px^kB_t+\Lambda \px^{k+1}B=\tilde{L} A_2 \tilde{R} \px^{k+2}B+\mathcal{M}_k,
			\end{aligned}
		\end{align}
		where
		\begin{align*}
			\mathcal{M}_k:=&\sum_{j=1}^{k}\bigg[-\px^j\Lambda\px^{k-j+1}B+\px^j(\tilde{L}A_2\tilde{R})\px^{k-j+2}B\bigg]+2\sum_{i=0}^{k}\px^i\big(\tilde{L} A_2 \px\tilde{R}\big) \px^{k-i+1}B\\
			&+\sum_{i=0}^{k}\bigg\{\px^i\big[\left(\tilde{L}_t+\Lambda \px\tilde{L}\right) \tilde{R}+\tilde{L} A_2 \tilde{R}_{x_1 x_1}\big] \px^{k-i}B \bigg\}+\px^{k}\left(\tilde{L} A_3\right)\notag\\
			\leq&C\sum_{j=1}^{k+1}\left(\abs{\px^j \rhob} \abs{\px^{k-j+1}b_1},0,\abs{\px^j\rhob} \abs{\px^{k-j+1}b_3}\right)^{t}
			+C\bar{\delta} \sum_{j=1}^{k+2}\abs{D_{-\frac{j}{2}}\px^{k-j+2}B}+\px^{k}\left(\tilde{L} A_3\right):=\sum_{i=1}^{3}\mathcal{M}_{k}^{(i)}.\notag
		\end{align*}
		\begin{flushleft}
			\textbf{Step 1. The diagonalized system.}
		\end{flushleft}
		
		We shall use the weighted energy method to derive the intrinsic dissipation. Without loss of generality, we assume that $\px{\rhob}>0 $ since the case that $\px{\rhob}<0$ is similar.
		
		Setting $v_{1}=\frac{\rhob}{\rho_{+}}$, one has $\left|v_{1}-1\right| \leq C \delta $. Applying $\px^k$ to \cref{eq-diaB} and then multiplying the resulted equations by $\bar{B}^{(k)}=\left(v_{1}^{n} \px^kb_{1}, \px^kb_{2}, v_{1}^{-n} \px^kb_{3}\right)$, where $n=4[\delta^{-\frac{1}{2}}]+1$ is a large positive integer, one has
		\begin{align}\label{sec-3-1}
			\begin{aligned}
				&\int_\R\left(\frac{v_1^n}{2} \abs{\px^kb_1}^2+\frac{1}{2} \abs{\px^kb_2}^2+\frac{v_1^{-n}}{2} \abs{\px^kb_3}^2\right)_t+\bar{B}^{(k)}_{x_1} A_4 \px^{k+1}Bdx_1
				+\int_{\R} a_1\abs{\px^kb_1}^2+ a_2\abs{\px^kb_3}^2dx_1\\
				=&\int_{\R}\bar{B}^{(k)}\px A_{4}\px^{k}\px B+\bigg[\left(\frac{v_1^n}{2}\right)_t \abs{\px^kb_1}^2+\left(\frac{v_1^{-n}}{2}\right)_t \abs{\px^kb_3}^2\bigg]+\bar{B}^{(k)}\mathcal{M}_kdx_1:=I_1+I_2+I_3,
			\end{aligned}
		\end{align}
		where $A_{4}=\tilde{L} A_{2} \tilde{R}$ and
		\begin{align*}
			&a_1:=-\frac{v_1^{n-1}}{2}\left(n \tilde\lambda_1 \px v_{1}+v_1\px  \tilde\lambda_{1 }\right)\ge C\delta ^{-\frac{1}{2}}\px \rhob-C\bar{\delta}D_{-1},\\
			&a_2:=\frac{v_1^{-n-1}}{2}\left(n \tilde\lambda_3 \px  v_{1 }-v_1 \px \tilde\lambda_{3 }\right)\ge C\delta ^{-\frac{1}{2}}\px \rhob-C\bar{\delta}D_{-1}.
		\end{align*} 
		For convenience, we denote
		\begin{align}
			&\tilde{E}_k:=\int_{\R}\frac{v_1^n}{2} \abs{\px^kb_1}^2+\frac{1}{2} \abs{\px^kb_2}^2+\frac{v_1^{-n}}{2} \abs{\px^kb_3}^2dx_1,\label{sec-n-3}\\
			&\tilde{K}_k:=\int_{\R}\px^{k+1}B^tA_4\px^{k+1}Bdx_1,\quad G_k:=C\delta^{-\frac12}\int_{\R} \px\rhob\abs{\px^kb_1}^2+ \px\rhob\abs{\px^kb_3}^2dx_1.\label{sec-n-3-tt}
		\end{align}
		For $k=0$, $\tilde{E}_0=C\norm{\tilde{V}}_{L^2}^2$, and for $k\ge 1$
		\begin{align}\label{sec-n-5}
			C\left(\norm{\px^{k}\tilde{V}}_{L^2}^2-\deltab\sum_{j=0}^{k-1}(1+t)^{-(k-j)}\norm{\px^{j}\tilde{V}}_{L^2}^2\right)\leq \tilde{E}_{k}\le  C\left(\norm{\px^{k}\tilde{V}}_{L^2}^2+\deltab\sum_{j=0}^{k-1}(1+t)^{-(k-j)}\norm{\px^{j}\tilde{V}}_{L^2}^2\right).
		\end{align}
		We first verify the dissipation term. A direct computation shows that the matrix $A_{4}$ is nonnegative definite. In fact, one has
		\begin{align*}
			A_4=\left(\begin{array}{ccc}
				\frac{\tilde{\mu}}{2}+\frac{\gamma-1}{2\gamma}\tilde{\kappa} & a_3 & -\frac{\tilde{\mu}}{2}+\frac{\gamma-1}{2\gamma}\tilde{\kappa}\\
				a_3& \frac{\tilde{\kappa}}{2\gamma} & a_3\\
				-\frac{\tilde{\mu}}{2}+\frac{\gamma-1}{2\gamma}\tilde{\kappa}& a_3 &\frac{\tilde{\mu}}{2}+\frac{\gamma-1}{2\gamma}\tilde{\kappa}
			\end{array}\right),
			\text{\quad where\quad}
			\tilde{\mu}=\frac{\lambda+2\mu}{\tilde{\rho}} ,\quad\tilde{\kappa}=\frac{\kappa}{\tilde{\rho}} ,\quad a_3=-\sqrt{\frac{\gamma-1}{2}}\frac{\tilde{\kappa}}{\gamma}.
		\end{align*}
		Furthermore,
		\begin{align}
			\begin{aligned}\label{ds}
				&\px^{k+1}B^tA_4\px^{k+1}B\\
				=& \tilde{\kappa}\left[\sqrt{\frac{\gamma-1}{2\gamma}}(\px^{k+1} b_{1}+\px^{k+1} b_{3})-\frac{1}{\sqrt{\gamma}}\px^{k+1} b_{2}\right]^2+\tilde{\mu}\left[\frac{1}{\sqrt{2}}(\px^{k+1} b_{3}-\px^{k+1} b_{1})\right]^2\\
				\ge&C\big(|\px^{k+1}\tilde{\Psi}_1|^2+|\px^{k+1}\tilde{W}|^2\big)-C\bar{\delta}\sum_{j=1}^{k+1}D_{-j}\abs{\px^{k+1-j}\tilde{\Psi}_1}^2.
			\end{aligned}
		\end{align}
		And we also have
		\begin{align}\notag
			\int_{\R} \p_{x_1} \left(\bar{B}^{(k)}-\px^{k}B^t\right)A_4\px^{k+1}Bdx_1 \leq C(\chi+\bar{\delta}^{\frac12})(1+t)^{-1}E_k+C(\chi+\bar{\delta}^{\frac12})K_k.
		\end{align}
		By \cref{sec-n-4} and \cref{sec-n-5} and {\it a priori} assumptions \cref{apa}, one has
		\begin{align}\label{I12}
			I_1+I_2\leq C(\chi+\bar{\delta}^{\frac12})(1+t)^{-1}E_k+C(\chi+\bar{\delta}^{\frac12})K_k.
		\end{align}
		The estimate of $I_3$ is relatively subtle. We first estimate $\int_{\R}\bar{B}^{(k)}\mathcal{M}_{k}^{(2)}dx_1$. It holds that 
		\begin{align}
			\int_{\R}\bar{B}^{(k)}\mathcal{M}_{k}^{(2)}dx_1\leq C\deltab\sum_{i=0}^{k+1}(1+t)^{-i}\norm{\px^{k-i+1}B}_{L^2}^2, \quad \text{for}\quad k\ge 0.\label{I14}
		\end{align}
		Next, we consider the estimate of $\int_{\R}\bar{B}^{(k)}\mathcal{M}_{k}^{(1)}dx_1$. For the case of $k=0$, it is easy to see
		\begin{align}\label{IL12}	
			\int_{\R}\bar{B}^{(0)}\mathcal{M}_{0}^{(1)}dx_1\le C\int_{\R} \px \rhob \abs{b_1}^2+\px \rhob \abs{b_3}^2 dx_1.
		\end{align}
		For the case of $1\le k \le 2$, noticing
		$$
		\abs{\px^2 \rhob}\lesssim \frac{\abs{x_1}}{1+t}\px\rhob,\quad
		\abs{\px^3 \rhob}\lesssim \frac{\abs{x_1}^2}{(1+t)^2}\px\rhob+(1+t)^{-1}\px\rhob,
		$$ 
		we have
		\begin{align}
			&\int_{\R}\bar{B}^{(1)}\mathcal{M}_{1}^{(1)}dx_1\lesssim \sum_{i=1,3}\int_{\R} \frac{\abs{x_1}}{1+t} \px \rhob \abs{b_i}\abs{\px b_i} dx_1+\sum_{i=1,3}\int_{\R}  \px \rhob \abs{\px b_i}^2 dx_1 \notag\\
			&\qquad\qquad\qquad\quad\; \lesssim \deltab\sum_{i=1,3}\int_{\R} \tilde{\omega}_{-\frac12} \abs{\px b_i}^2 dx_1+\sum_{j=0}^1\sum_{i=1,3}(1+t)^{-j} \int_{\R} \px \rhob \abs{\px^{1-j}b_i}^2 dx_1,\label{2025-10-30-1}\\
			&\int_{\R}\bar{B}^{(2)}\mathcal{M}_{2}^{(1)} dx_1\lesssim \sum_{i=1,3}\int_{\R} \frac{x_1^2}{(1+t)^2} \px \rhob \abs{b_i}\abs{\px^2 b_i} dx_1+\sum_{i=1,3}\int_{\R} \frac{\abs{x_1}}{1+t} \px \rhob \abs{\px b_i}\abs{\px^2 b_i} dx_1\notag\\
			&\qquad\qquad\qquad\qquad\;\;+(1+t)^{-1}\sum_{i=1,3}\int_{\R}  \px \rhob \abs{b_i}\abs{\px^2 b_i} dx_1+\sum_{i=1,3}\int_{\R}  \px \rhob \abs{\px^2 b_i}^2 dx_1 \notag\\
			&\qquad\qquad\qquad\quad\; \lesssim\deltab \sum_{i=1,3}\int_{\R} \tilde{\omega}_{-\frac12} \abs{\px^2 b_i}^2 dx_1+\sum_{j=0}^2\sum_{i=1,3}(1+t)^{-j} \int_{\R} \px \rhob \abs{\px^{2-j}b_i}^2 dx_1,\label{2025-10-30-2}
		\end{align}
		where by \cref{errors-1},  we have used
		\begin{align*}
			&\int_{\R} \frac{\abs{x_1}}{1+t} \px \rhob \abs{b_i}\abs{\px b_i} dx_1 \lesssim \int_{\R} \frac{x_1^2}{1+t}\px\rhob\abs{\px b_i}^2dx_1+(1+t)^{-1}\int_{\R}\px\rhob\abs{ b_i}^2dx_1\\
			&\qquad\qquad\qquad\qquad\qquad\quad\lesssim\deltab \int_{\R} \tilde{\omega}_{-\frac12}\abs{\px b_i}^2dx_1+(1+t)^{-1}\int_{\R}\px\rhob\abs{ b_i}^2dx_1,\\
			&\int_{\R} \frac{x_1^2}{(1+t)^2} \px \rhob \abs{b_i}\abs{\px^2 b_i} dx_1 \lesssim\int_{\R} \frac{x_1^4}{(1+t)^2}\px\rhob\abs{\px^2 b_i}^2dx_1+(1+t)^{-2}\int_{\R}\px\rhob\abs{ b_i}^2dx_1\\
			&\qquad\qquad\qquad\qquad\qquad\qquad\;\;\lesssim\deltab \int_{\R} \tilde{\omega}_{-\frac12}\abs{\px^2 b_i}^2dx_1+(1+t)^{-2}\int_{\R}\px\rhob\abs{ b_i}^2dx_1.
		\end{align*}
		Next, we estimate terms involving $\mathcal{M}_k^{(3)}$. For $k=0$, it holds that
		\begin{align}\notag
			\begin{aligned}
				\norm{\bar{B}^{(k)}\mathcal{M}_{k}^{(3)}}_{L^1}\leq&C\int_{\R} \sum_{l=0}^1 (\deltab D_{-1}+\mathcal{D}^{(l)}+\mathcal{T}^{(l)}+\mathcal{Z}^{(l)})\abs{\bar{B}^{(0)}}dx_1:=\sum_{j=1}^7 I_{4}^{(j)}.
			\end{aligned}
		\end{align} 
		Then we estimate $I_4$ term by term as follows:
		\begin{align}
			I_{4}^{(1)}\leq& C\deltab \int_{\R}D_{-1}|\bar{B}| dx_1\leq C\bar{\delta}  (1+t)^{-\frac{1}{2}}+C\bar{\delta}(1+t)^{-1}\norm{B}^2_{L^2},\label{I61}\\
			I_{4}^{(2)}\leq&C\deltab \int_{\R}\big(D_{-\frac{1}{2}}\abs{\px V}+D_{-1}\abs{V}\big)\abs{\bar{B}}dx_1 \leq C\deltab (1+t)^{-1}\norm{V}^2_{L^2}+C\deltab \norm{\px V}^2_{L^2},\label{I62}\\
			I_{4}^{(3)}\leq&C\deltab \int_{\R}\big(D_{-1}\abs{\px V}+D_{-\frac{3}{2}}\abs{V}+D_{-\frac{1}{2}}\abs{\px^2 V}\big)\abs{\bar{B}}dx_1 \leq C\deltab (1+t)^{-1}\norm{V}^2_{L^2}+C\deltab \norm{\px V}^2_{H^1},\label{I63}\\
			I_{4}^{(4)}\leq&\int_{\R}\bigg(C\bar{\delta} \big(D_{-1}\abs{ V}^2+D_{-1}\abs{ V}+D_{-\frac{1}{2}}\abs{\px V}\big)+\abs{\px V}^2\bigg)\abs{\bar{B}}dx_1\label{I64}\\
			\leq& C\deltab (1+t)^{-1}\norm{V}^2_{L^2}+C(\deltab+\chi)\norm{\px V}^2_{L^2},\nonumber\\
			I_{4}^{(5)}\leq&C\int_{\R}\abs{\px^2 V}\abs{\px V}\abs{\bar{B}}+\deltab\bigg(D_{-\frac{1}{2}}\abs{\px V}^2+D_{-1}\abs{\px V}+D_{-\frac{1}{2}}\abs{\px^2 V}\bigg)\abs{\bar{B}}dx_1\label{I65}\\
			\leq& C(\deltab+\chi)(1+t)^{-1}\norm{V}^2_{L^2}+C(\deltab+\chi)\norm{\px V}^2_{H^1},\nonumber\\
			I_4^{(6)}+&I_4^{(7)}\le  C(\varepsilon^2+\delta)(1+t)^{-1}\norm{V}^2_{L^2}+C(\varepsilon^2+\delta)(1+t)e^{-ct},\label{I66}
		\end{align}
		where we have used the {\it a priori} assumptions \cref{apa}. For $k=1,2$, one should take care of order of derivatives so that
		\begin{align}
			\begin{aligned}
				&\int_{\R}\bar{B}^{(1)}\mathcal{M}_{1}^{(3)}dx_1\leq C\int_{\R}\sum_{l=0}^1 (\deltab D_{-1}+\mathcal{D}^{(l)}+\mathcal{T}^{(l)}+\mathcal{Z}^{(l)})\left(\abs{\px^2{B}}+\bar{\delta}^{\frac{1}{2}}D_{-\frac{1}{2}}|\px B|\right)dx_1,
			\end{aligned}\label{2025.6.02-6}\\
			\begin{aligned}
				\int_{\R}\bar{B}^{(2)}\mathcal{M}_{2}^{(3)}dx_1\leq&\int_{\R}\left(\abs{\px^3B}+C\deltab^{\frac{1}{2}}D_{-\frac{1}{2}}\abs{\px^2B}\right)\left( L \px A_3+\px L A_3\right)dx_1\\
				\leq&C\int_{\R}\sum_{l=0}^1 (\deltab D_{-1}+\mathcal{D}^{(l)}+\mathcal{T}^{(l)}+\mathcal{Z}^{(l)})\big(\deltab D_{-\frac{1}{2}}\abs{\px^3{B}}+\deltab^{\frac{3}{2}}D_{-1}\abs{\px^2B}\big)dx_1\\
				&+C\int_{\R}\sum_{l=1}^2 (\deltab D_{-\frac32}+\mathcal{D}^{(l)}+\mathcal{T}^{(l)}+\mathcal{Z}^{(l)})\big(\abs{\px^3{B}}+\deltab^{\frac{1}{2}}D_{-\frac{1}{2}}\abs{\px^2B}\big)dx_1.
			\end{aligned}\label{2025.6.02-7}
		\end{align} 
		We focus on \cref{2025.6.02-6}. For the error terms in \cref{2025.6.02-6}, we have
		\begin{align}\notag
			\int_{\R}&\deltab D_{-1}\abs{\px^{2}B}+\deltab^{\frac{3}{2}}D_{-\frac{3}{2}}\abs{\px B}dx_1\leq C\deltab (1+t)^{-\frac{3}{2}}+C\deltab \norm{\px^2B}_{L^2}^2+C\deltab (1+t)^{-1}\norm{\px B}_{L^2}^2.
		\end{align}
		The calculation of the remaining terms in \cref{2025.6.02-6} is similar to  \cref{I61,I62,I63,I64,I65}. Then we have
		\begin{align}\notag
			\int_{\R}\bar{B}^{(1)}\mathcal{M}_{1}^{(3)}dx_1\leq&C(\deltab+\chi)\sum_{i=0}^2(1+t)^{-i}\norm{\px^{2-i}B}_{L^2}^2+{C\bar\delta(1+t)^{-\frac{3}{2}}}.
		\end{align}
		Next, we  calculate \cref{2025.6.02-7}. For the error terms in \cref{2025.6.02-7}, one has
		\begin{align}\notag
			C\int_{\R}\deltab D_{-\frac{3}{2}}\abs{\px^3{B}}+\deltab^{\frac32}D_{-2}\abs{\px^2B}dx_1\leq C\deltab (1+t)^{-\frac{5}{2}}+C\deltab \norm{\px^3B}_{L^2}^2+ C\deltab (1+t)^{-1}\norm{\px^2 B}_{L^2}^2.
		\end{align}
		At the moment, by the {\it a priori} assumptions \cref{apa}, we focus on the terms involving third-order derivatives:
		\begin{align*}
			\int_{\R}\mathcal{D}^{(1)}&\abs{\px^3B}dx_1\leq \int_{\R}C\deltab \sum_{j=1}^{3} D_{-\frac{j}{2}} \abs{\px^{3-j} V}\abs{\px^3B}dx_1\leq C\deltab \sum_{j=0}^{3}(1+t)^{-j}\norm{\px^{3-j}V}_{L^2}^2,\\
			\int_{\R}\mathcal{D}^{(2)}&\abs{\px^3B}dx_1\leq \int_{\R}C\deltab \sum_{j=1}^{4} D_{-\frac{j}{2}} \abs{\px^{4-j} V}\abs{\px^3B}dx_1\leq C\deltab \sum_{j=0}^{3}(1+t)^{-j}\norm{\px^{3-j}V}_{L^2}^2,\\
			\int_{\R}\mathcal{T}^{(1)}\abs{\px^3B}dx_1\leq& \int_{\R} \left[\abs{\px^2 V}\abs{\px V}+C\deltab \left(D_{-\frac{1}{2}}\abs{\px V}^2+D_{-1}\abs{\px V}+D_{-\frac{1}{2}}\abs{\px^2 V}\right) \right]\abs{\px^3B}dx_1\\
			\leq &C(\deltab+\chi)\sum_{j=0}^3(1+t)^{-j}\norm{\px^{3-j}V}_{L^2}^2\nonumber,\\
			\int_{\R}\mathcal{T}^{(2)}\abs{\px^3B}dx_1\leq& \int_{\R}\left(\abs{\px^3V}\abs{\px V}+\abs{\px^2V}^2\right)\abs{\px^3B}+C\deltab  D_{-\frac{1}{2}}\big(\abs{\px V}\abs{\px^2 V}+\abs{\px^3V}\big)\abs{\px^3B}dx_1\nonumber\\
			&+\int_{\R}\deltab D_{-1}\big(\abs{\px^2V}+\abs{\px V}^2\big)\abs{\px^3B}+\deltab D_{-\frac{3}{2}}\abs{\px V}\abs{\px^3B}dx_1\\
			\leq &C(\deltab+\chi)\sum_{j=0}^3(1+t)^{-j}\norm{\px^{3-j}V}_{L^2}^2+C\norm{\px^2 V}_{L^4}^4\nonumber.
		\end{align*}
		By Gagliardo-Nirenberg inequality and the {\it a priori} assumption \cref{apa}, one has
		\begin{align}\notag
			\norm{\px^2 V}_{L^4}^4\leq C\norm{\px^3 V}^{\frac{5}{2}}_{L^2}\norm{\px V}^{\frac{3}{2}}_{L^2}\leq C\chi^2\norm{\px^3 V}^2_{L^2}.
		\end{align}
		It is also the same to treat the terms containing $\mathcal{Z}^{l}$ as for treating \cref{I66}. In sum, one has
		\begin{align}\label{Mk}
			\int_{\R}\bar{B}^{(k)}\mathcal{M}_{k}^{(3)}dx_1\leq C\deltab (1+t)^{-\frac{1}{2}-k}+C(\deltab+\chi)\sum_{j=0}^{k+1}(1+t)^{-j}\norm{\px^{k+1-j}V}_{L^2}^2.
		\end{align}

		\begin{flushleft}
			\textbf{Step 2. Estimates on $\int_{\R} \tilde{\omega}_{-\frac12} \abs{\px^k b_i}^2 dx_1$.}
		\end{flushleft}
		
		Notice that we still need to control $\int_{\R} \tilde{\omega}_{-\frac12} \abs{\px^k b_i}^2 dx_1$ in \cref{2025-10-30-1,2025-10-30-2}. From \cref{errors-1} and \cref{2025-10-6}, we have 
		$$
		\deltab\tilde{\omega}_{-1/2}\approx\deltab (1+t)^{-\frac12}e^{-\frac{\tilde{c}_0 x_1^2}{1+t}},  
		\quad \px \rhob \approx\deltab (1+t)^{-\frac12}e^{-\frac{{c}_0 x_1^2}{1+t}},
		$$ 
		with $\tilde{c}_0<c_0$. Therefore, $\int_{\R} \tilde{\omega}_{-\frac12} \abs{\px^k b_i}^2 dx_1$ cannot be directly controlled by $G_k$ in \cref{sec-n-3-tt}. We need to develop further weighted heat kernel inequalities \cref{2025-10-4,2025-10-5} to obtain the control of $\int_{\R} \tilde{\omega}_{-\frac12} \abs{\px^k b_i}^2 dx_1$ for $1\le k \le 2$. To derive these estimates, let 
		$$
		\tilde{h}(x_1,t)=\int_{-\infty}^{x_1} \tilde{\omega}_{-\frac12}(y,t)dy.
		$$
		It follows that $\norm{\tilde{h}}_{L^\infty}\leq C$ and $4\tilde{c}_0\p_t \tilde{h}= \px\tilde{\omega}_{-\frac12}$. Multiplying \cref{equ-Bk}$_1$ by $\tilde{h}\px^k{b}_1$, we have
		\begin{align}\label{2025-10-30-3}
			& \frac{1}{2} \frac{d}{dt}\left( \tilde{h} \px^kb_1 \right)^2-\frac{1}{2}\p_t \tilde{h}\left(\px^k h \right)^2 + \frac{1}{2}\px \left[ \tilde{h} \tilde{\lambda}_1 \left(\px^k b_1 \right)^2 \right] - \frac{1}{2} \px \left( \tilde{h} \tilde{\lambda}_1\right)\left( \px^k b_1\right)^2\notag\\
			=&\left(\mathcal{M}_k^{(1)} \right)_1\tilde{h}\px^k b_1+ \left[ \left(\tilde{L} A_2 \tilde{R} \px^{k+2} B \right)_1 + \sum_{i=2}^3  \left(\mathcal{M}_k^{(i)} \right)_1 \right] \tilde{h} \px^k b_1.
		\end{align}
		Using the same method as for deriving \cref{2025-10-30-1,2025-10-30-2}, one has
		\begin{align*}
			\int_{\R}\left(\mathcal{M}_k^{(1)} \right)_1\tilde{h}\px^k b_1 dx_1 \lesssim \deltab \int_{\R} \tilde{\omega}_{-\frac12} \abs{\px^k b_1}^2 dx_1 + \sum_{i=0}^k(1+t)^{-i} \int_{\R} \px \rhob \abs{\px^{k-i}b_1}^2 dx_1. 
		\end{align*}
		Using an argument similar to \cref{2025.6.02-6}-\cref{Mk}, one has
		\begin{align*}
			\int_{\R}\Big[ \left(\tilde{L} A_2 \tilde{R} \px^{k+2} B \right)_1 + \sum_{i=2}^3  \left(\mathcal{M}_k^{(i)} \right)_1 \Big] \tilde{h} \px^k b_1 dx_1 \lesssim \sum_{j=0}^{k+1}(1+t)^{j-k-1}\norm{\px^j V}_{L^2}^2+(1+t)^{-\frac12-k}.
		\end{align*}
		Thus, integrating \cref{2025-10-30-3} with respect to $x_1$, we obtain
		\begin{align}\label{2025-10-4}
			\int_{\R} \tilde{\omega}_{-\frac12} \abs{\px^k b_1}^2 dx_1 +\p_t \int_{\R} \tilde{h} \left(\px^k b_1 \right)^2 dx_1 \lesssim &  \sum_{j=0}^{k+1} (1+t)^{j-k-1}\norm{\px^j V}_{L^2}^2 + (1+t)^{-\frac12-k} \notag \\
			&+ \sum_{j=0}^k (1+t)^{-j} \int_{\R} \px \rhob \abs{\px^{k-j}b_1}^2 dx_1 .
		\end{align}
		Using the same argument as \cref{2025-10-4}, one has
		\begin{align}\label{2025-10-5}
			\int_{\R} \tilde{\omega}_{-\frac12} \abs{\px^k b_2}^2 dx_1- \p_t \int_{\R} \tilde{h} \left(\px^k b_2 \right)^2 dx_1 \lesssim &\sum_{j=0}^{k+1} (1+t)^{j-k-1}\norm{\px^j V}_{L^2}^2 + (1+t)^{-\frac12-k} \notag \\
			&+ \sum_{j=0}^k (1+t)^{-j} \int_{\R} \px \rhob \abs{\px^{k-j}b_2}^2 dx_1 .
		\end{align}
		Finally, by collecting \cref{sec-3-1}-\cref{Mk}, \cref{2025-10-4,2025-10-5}, we arrive at
		\begin{align}
			&\frac{d}{dt}\big(\sum_{i=0}^2\tilde{E}_i\big)+\sum_{i=0}^2(\tilde{K}_i+G_{i})\leq C\bar{\eta}\left[(1+t)^{-1}\big(\sum_{i=0}^2 E_i\big)+\left(\norm{\px\tilde{\Phi}}_{H^2}^2+\norm{\px\tilde{\Psi}_2}_{H^2}^2+\norm{\px\tilde{\Psi}_3}_{H^2}^2\right)\right]+C\bar{\delta}(1+t)^{-\frac12},\label{1}\\
			&\frac{d}{dt}\big(\sum_{i=1}^2\tilde{E}_i\big)+\sum_{i=1}^2(\tilde{K}_i+G_{i})\leq C\etab(1+t)^{-1}\big(\sum_{i=1}^2E_i+G_0\big)+C\deltab(1+t)^{-\frac{3}{2}}\nonumber\\
			&\qquad\qquad\qquad\qquad+C\etab(1+t)^{-2}E_0+C\etab\left(\norm{\px^2\tilde{\Phi}}_{H^1}^2+\norm{\px^2\tilde{\Psi}_2}_{H^1}^2+\norm{\px^2\tilde{\Psi}_3}_{H^1}^2\right),\label{2}\\
			&\frac{d}{dt}\tilde{E}_2+\tilde{K}_2+G_{2}\leq C\etab\sum_{i=0}^2(1+t)^{-(3-i)}E_{i}+C\deltab(1+t)^{-\frac{5}{2}}+C\etab\left(\norm{\px^3\tilde{\Phi}}_{L^2}^2+\norm{\px^3\tilde{\Psi}_2}_{L^2}^2+\norm{\px^3\tilde{\Psi}_3}_{L^2}^2\right)\nonumber\\
			&\qquad\qquad\qquad\qquad\quad+C\etab\big[(1+t)^{-2}G_0+(1+t)^{-1}G_1\big].\label{3}
		\end{align}
		
		\begin{flushleft}
			\textbf{Step 3. Estimates on $\norm{\px^{k+1}\tilde{\Phi}}_{L^2}$.}
		\end{flushleft}
		
		We still need to estimate $\norm{\px^{k+1}\tilde{\Phi}}_{L^2}$. Applying $\px^k$ ($k=0,1,2$) to \cref{sys-Eu0-1}$_2$ and then multiplying the resulting equation by $\px^{k+1}\tilde{\Phi}$, one has
		\begin{align}\label{sec-4-2}
			\p_t \int_{\R}\px^{k+1}\tilde{\Phi}\px^k \tilde{\Psi}_{1}dx_1+\left(\gamma-1\right)\norm{\px^{k+1}\tilde{\Phi}}^{2}_{L^2}=\int_{\R}\frac{\lambda+2\mu}{\rhot}\px^{k+2}\tilde{\Psi}_1\px^{k+1}\tilde{\Phi}dx_1+I_5+I_6,
		\end{align}
		where 
		\begin{align*}
			&  I_5= \int_{\R} \px^{k+1} \tilde{\Phi} \px^{k} J_{21}^{(2)}dx_1, \\   
			&I_6=\int_{\R} \px^{k+1}\tilde{\Phi} \left[ \px^k\mathcal{Q}_{21} -\left(\gamma-1\right)\px^{k+1}\tilde{W} -\px^{k}\left(J_1+\Tt\mathcal{Q}_1-J_{21}^{(1)}-\Tt\px\tilde{\Psi}_1\right) +\sum_{i=0}^{k-1}\px^{k-i}\left( \frac{\lambda+2\mu}{\rhot} \right)\px^{i+2}\tilde{\Psi}_1 \right]dx_1.
		\end{align*}
		Here we have used
		\begin{align*}
			\int_{\R} \p_t \px^{k}\tilde{\Psi}_1\px^{k+1}\tilde{\Phi}dx_1=\p_t \int_{\R} \px^{k}\tilde{\Psi}_1\px^{k+1}\tilde{\Phi}dx_1-\int_{\R} \px^{k+1}\tilde{\Psi}_1\px^{k}\left( \thetat\px\tilde{\Psi}_1-J_1-\thetat\mathcal{Q}_1 \right) dx_1.
		\end{align*}
		Next, multiplying $ \frac{1}{\thetat\rhot}\px^{k+1}$\cref{sys-Eu0-1}$_1$ by $\px^{k+1}\tilde{\Phi}$, one has
		\begin{align}\label{sec-4-5}
			\p_t \int_{\R} \frac{\lambda+2\mu}{2 \rhot\Tt} \abs{\px^{k+1}\tilde{\Phi}}^{2} dx_1=-\int_{\R}\frac{\lambda+2\mu}{\rhot}\px^{k+2}\tilde{\Psi}_1\px^{k+1}\tilde{\Phi}dx_1+I_7+I_8,
		\end{align}
		where
		\begin{align*}
			I_7&=\int_{\R} \frac{\px^{k+1} J_{1}}{\thetat\rhot} \px^{k+1}\tilde{\Phi}dx_1,
			\\
			I_8&=\int_{\R} \p_t\left(\frac{\lambda+2\mu}{2 \rhot\Tt} \right)\abs{\px^{k+1}\tilde{\Phi}}^{2}+\px^{k+1}\tilde{\Phi}\left[ \frac{\px^{k+1}(\thetat\mathcal{Q}_1)}{\thetat\rhot} - \sum_{i=0}^{k+1} \frac{\px^{k+1-i}\thetat \px^i\tilde{\Psi}_1}{\rhot\thetat}  \right] dx_1.
		\end{align*}
		By \cref{mathcal-Z}, \cref{mathcal-Q} and \cref{J}  and {\it a priori} assumptions \cref{apa}, one has
		\begin{align}\label{sec-4-8}
			\abs{I_6}+\abs{I_8} \le &\frac{\gamma-1}{160}\norm{\px^{k+1}\tilde{\Phi}}_{L^2}^2+C\left( \sum_{i=1}^3 \norm{\px^{k+1}\tilde{\Psi}_i}_{L^2}^2+\norm{\px^{k+1}\tilde{W}}_{L^2}^2\right)\notag\\
			&+ C\etab \sum_{j=1}^k(1+t)^{-j}\norm{\px^{k+1-j}V}_{L^2}^2+C\deltab(1+t)^{-\frac{3}{2}-k}.
		\end{align}
		For $I_5$, one should take care of the highest order of derivatives so that
		\begin{align}\label{J21-1}
			\begin{aligned}
				&I_5= \int_{\R}(2\mu+\lambda)\px^k\left[\px(\mr{u}_1-\tilde{u}_1)-\frac{1}{\rhot}\px^2\tilde{\Psi}_1\right]\px^{k+1}\tilde{\Phi}dx_1\\
				=&(\lambda+2\mu)\int_{\R} \px^k \bigg[ \px\left( \frac{1}{\rhot} \right) \px\Psi_1 - \px \left( \frac{\px\Phi\px\Psi_1}{\rhom\rhot} \right) -\px \left( \frac{\ut_1\px\Phi}{\rhom}  \right)+ \frac{1}{\rhot}\px \mathcal{B}_{1r} +\px \Do \left( \frac{m_1}{\rho} -\frac{\mm_1}{\rhom}\right) \bigg] \px^{k+1}\tilde{\Phi} dx_1  .
			\end{aligned}
		\end{align}
		The zero mode term in the above formula that may contain the 
		$(k+2)$-th order derivative is 
		$$ \px^k\left[- \px \left( \frac{\px\Phi\px\Psi_1}{\rhom\rhot} \right) -\px \left( \frac{\ut_1\px\Phi}{\rhom}  \right)+ \frac{1}{\rhot}\px \mathcal{B}_{1r}\right].$$
		Notice that $\px^{k+2}\Phi=\frac{1}{\thetat}\px^{k+2}\tilde{\Phi}+\sum_{i=0}^{k+1} \px^{k+2-i}(\frac{1}{\thetat}) \px^i\tilde{\Phi}$ and $\px^{k+2}\tilde{\Phi}$
		and $\px^{k+1}\tilde{\Phi}$
		can form a total differential, so the terms containing 
		$\px^{k+2}\Phi$
		can transfer one derivative out through integration by parts. Therefore, next we will only show in detail the treatment of 
		$-\frac{1}{\rhom\rhot}\px\Phi\px^{k+2}\Psi_1$ in the term 
		$- \px^{k+1} \left( \frac{\px\Phi\px\Psi_1}{\rhom\rhot} \right)$.
		By \cref{eq-pert1}$_1$, one has
		\begin{align}\label{sec-4-1}
			& -\int_{\R} \frac{\px \Phi \px^{k+2} \Psi_1} {\rhom\rhot}\px^{k+1}\tilde{\Phi}dx_1=\int_{\R} \frac{\px \Phi \p_t \px^{k+1}\Phi +\px\Phi\px^{k+1}\mathcal{Q}_1}{\rhom\rhot}\px^{k+1}\tilde{\Phi}dx_1 \notag\\
			=&  \int_{\R} \frac{\px \Phi \p_t \left[ \px^{k+1}\tilde{\Phi} + \sum_{i=0}^k \px^{k+1-i} \left( \frac{1}{\thetat}\right)\px^i\tilde{\Phi} \right] +\px\Phi\px^{k+1}\mathcal{Q}_1}{\rhom\rhot}\px^{k+1}\tilde{\Phi}dx_1 \notag\\
			=&\p_t \int_{\R} \frac{\px \Phi (\px^{k+1}\tilde{\Phi})^2}{2\rhom\rhot}dx_1 - \int_{\R} \p_t\left( \frac{\px\Phi}{2\rhom\rhot} \right) \abs{\px^{k+1}\tilde{\Phi}}^2 dx_1 + \int_{\R} \frac{\px\Phi \px^{k+1}\mathcal{Q}_1\px^{k+1}\tilde{\Phi}}{\rhom\rhot}dx_1 \notag\\
			&+\int_{\R} \frac{\px\Phi}{\rhom\rhot}\p_t\left[  \sum_{i=0}^k \px^{k+1-i}\left( \frac{1}{\thetat} \right) \px^i\tilde{\Phi} \right]\px^{k+1}\tilde{\Phi} dx_1.
		\end{align}
		Combining \cref{J21-1} and \cref{sec-4-1} and using \cref{mathcal-Z}-\cref{J}  and the {\it a priori} assumptions \cref{apa}, one has
		\begin{align}\label{J21}
			\begin{aligned}
				I_5-(\lambda+2\mu) \p_t\int_{\R} \frac{\px \Phi \abs{\px^{k+1}\tilde{\Phi}}^2}{2\rhom\rhot}dx_1
				&\leq C \etab\sum_{j=0}^k(1+t)^{-j}\norm{\px^{k+1-j}V}_{L^2}^2+C\norm{\mathcal{Z}^{(k+1)}}_{L^2}^2 \\
				&\leq C \etab\sum_{j=0}^k(1+t)^{-j}\norm{\px^{k+1-j}V}_{L^2}^2+C\varepsilon^2  e^{-ct}.
			\end{aligned}
		\end{align}
		For $I_{7}$, one should also take care of the highest order of derivatives, so that 
		\begin{align}\label{2025-11-7-1}
			I_7&=\int_{\R} \frac{\px^{k+1} \left( \p_t\thetat\Phi-\ut_1\px\tilde{\Phi}+\px\thetat\ut_1\tilde{\Phi}-\px\ut_1 \tilde{\Phi} \right)}{\thetat\rhot}\px^{k+1}\tilde{\Phi}dx_1\notag\\
			&=\int_{\R} \frac{\px^{k+1} \left( \p_t\thetat\Phi+\px\thetat\ut_1\tilde{\Phi}-\px\ut_1 \tilde{\Phi} \right)-\sum_{i=0}^{k}\px^{k+1-i}\ut_1\px^{i+1}\tilde{\Phi}}{\thetat\rhot}\px^{k+1}\tilde{\Phi}dx_1+\int_{\R}\px\left( \frac{\ut_1}{2\thetat\rhot}\right)\abs{\px^{k+1}\tilde{\Phi}}^2dx_1\notag\\
			&\leq C\etab \sum_{j=0}^k(1+t)^{-j}\norm{\px^{k+1-j}V}_{L^2}^2.
		\end{align}
		Combining \cref{sec-4-2}-\cref{2025-11-7-1}, one has
		\begin{align}\label{sec-4-9}
			& \frac{d}{dt} \int_{\R}\left(\px^{k+1}\tilde{\Phi}\px^k \tilde{\Psi}_{1}+ \frac{\lambda+2\mu}{2 \rhot\Tt} \abs{\px^{k+1}\tilde{\Phi}}^{2}-(\lambda+2\mu)\frac{\px\Phi\abs{\px^{k+1}\tilde{\Phi}}^2}{2\rhom\rhot}\right)dx_1+\frac{\gamma-1}{2}\norm{\px^{k+1}\tilde{\Phi}}^{2}_{L^2} \notag\\
			\leq & C \etab \sum_{j=1}^{k}(1+t)^{-j}\norm{\px^{k+1-j}V}_{L^2}^2+ C \deltab(1+t)^{-\frac{3}{2}-k}+C\left( \sum_{i=1}^3 \norm{\px^{k+1}\tilde{\Psi}_i}_{L^2}^2+\norm{\px^{k+1}\tilde{W}}_{L^2}^2\right). 
		\end{align}
		
		\begin{flushleft}
			\textbf{Step 4. Estimates on $\tilde{\Psi}_i$, $i=2,3$.}
		\end{flushleft}
		
		Applying $\px^k$, $k=0,1,2$ to \cref{sys-Eu0-1}$_3$ , then multiplying them by $\p^k\tilde{\Psi}_i$, $i=2,3$ and then integrating the resulting equation over $\R$ with respect to $x_1$, one has
		\begin{align*}
			\frac{d}{dt}\norm{\px^k\tilde{\Psi}_i}_{L^2}^2+\int_{\R}\frac{\mu}{\rhot}\abs{\px^{k+1}\tilde{\Psi}_i}^2dx_1\leq& C\int_{\R}\abs{\px^2\left( \frac{\mu}{\rhot}\right)}\abs{\px^k \tilde{\Psi}_i}^2dx_1+\int_{\R}\abs{\px^{k}\Big(J_{2i}+\mathcal{Q}_{2i}\Big)\px^{k}\tilde{\Psi}_i}dx_1
			:=I_{\psi}^{k1}+I_{\psi}^{k2}.
		\end{align*}
		Directly,
		\begin{align}\notag
			I_{\psi}^{k1}\leq C\etab\sum_{j=0}^{k}(1+t)^{-j}\norm{\px^{k+1-j}V}_{L^2}^2,
		\end{align}
		and the estimate of $I_{\psi}^{k2}$ is similar to \cref{I61,I62,I63,I64,I65}. Finally, we have
		\begin{align}\label{psi2}
			& \sum_{i=2}^3\left[\frac{d}{dt}\norm{\px^k\tilde{\Psi}_i}_{L^2}^2+\int_{\R}\frac{\mu}{\rhot}\abs{\px^{k+1}\tilde{\Psi}_i}^2dx_1\right]\\
			\notag
			\leq& C \etab\left[\norm{\px^{k+1}\tilde{\Phi}}_{L^2}^2+\norm{\px^{k+1}\tilde{\Psi}_1}_{L^2}^2+\norm{\px^{k+1}\tilde{W}}_{L^2}^2 +\sum_{j=1}^{k}(1+t)^{-j}\norm{\px^{k+1-j}V}_{L^2}^2\right]+C \deltab(1+t)^{-\frac{1}{2}-k}.
		\end{align}
		Then combining \cref{sec-4-9,psi2} and \cref{1,2,3}, we have proved \cref{11-1,22-1,33-1}. 
	\end{proof}
	\begin{flushleft}
		\textbf{Decay rate for the zero modes.}
	\end{flushleft}
	\begin{Lem}\label{lem-zeromode-decay}
		Under the same assumptions as \cref{Thm-ape}, one has the following estimates for the zero modes
		\begin{align*}
			&\norm{\px^k(\phim,\varphim,\zetam)}_{L^2}^2\leq C(\varepsilon^2+\deltab)(1+t)^{-\frac{2k+1}{2}},\quad k=0,1.\\
			&\norm{(\phim,\varphim,\zetam)}_{L^{\infty}}\leq C(\varepsilon+\deltab^{\frac{1}{2}})(1+t)^{-\frac{1}{2}}.
		\end{align*}
	\end{Lem}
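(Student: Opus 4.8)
The plan is to upgrade the energy hierarchy \cref{11-1,22-1,33-1} furnished by \cref{nzm} into time‑weighted bounds for $E_0,E_1,E_2$, and then transfer those bounds to the physical zero modes $(\phim,\varphim,\zetam)$ through the identities \cref{123456}, discarding the non‑zero‑mode discrepancies (exponentially small, cf.\ \cref{mathcal-Z}).

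\textbf{Step 1: time‑weighted energy estimates.} Two structural facts drive the argument: by \cref{ds} and \cref{sec-n-5} each dissipation $K_{i-1}$ controls $E_i$ modulo lower‑order terms carrying a decaying time weight (and modulo a small multiple of $K_i$ from the cross terms with coefficient $\tilde c$), while every $G_i\ge0$. Integrating \cref{11-1} and applying Gronwall with $\etab$ small (so the resulting power $(1+t)^{C\etab}$ has $C\etab<\frac12$) gives
\[
\sum_{i=0}^2 E_i(t)+\int_0^t\sum_{i=0}^2\big(K_i+G_i\big)(s)\,ds\le C(\varepsilon^2+\bar\delta)(1+t)^{\frac12},
\]
and, using $K_0\gtrsim cE_1-C\bar\delta(1+t)^{-1}E_0$, $K_1\gtrsim cE_2-\cdots$, also $\int_0^t(E_1+E_2)\,ds\le C(\varepsilon^2+\bar\delta)(1+t)^{1/2}$. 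Next I multiply \cref{22-1} by $(1+t)$ and integrate: the extra term $(1+t)^{-1}(E_1+E_2)$ produced by the weight is absorbed into $(1+t)K_0$ and $(1+t)K_1$, while $\int_0^t(1+s)\,\etab(1+s)^{-1}(E_1+E_2+G_0)\,ds$, $\int_0^t(1+s)\,\etab(1+s)^{-2}E_0\,ds$ and $\int_0^t(1+s)\,\bar\delta(1+s)^{-3/2}\,ds$ are all $\le C(\varepsilon^2+\bar\delta)(1+t)^{1/2}$ by the previous display (recalling $G_0\le\sum(K_i+G_i)$). This yields
\[
E_1(t)+E_2(t)\le C(\varepsilon^2+\bar\delta)(1+t)^{-\frac12},\qquad \int_0^t(1+s)\sum_{i=1}^2\big(K_i+G_i\big)(s)\,ds\le C(\varepsilon^2+\bar\delta)(1+t)^{\frac12}.
\]
Finally, multiplying \cref{33-1} by $(1+t)^2$ and repeating the bookkeeping — now using $\int_0^t(1+s)^2\,\etab(1+s)^{-1}G_1\le\etab\int_0^t(1+s)\sum(K_i+G_i)$ together with the just‑proved bounds on $E_1,E_2$ — gives $E_2(t)\le C(\varepsilon^2+\bar\delta)(1+t)^{-3/2}$.

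\textbf{Step 2: transfer to the zero modes.} By the first line of \cref{123456}, $\phim=\px\Phi$ is a bounded multiple of $\px\tilde\Phi$ plus $\px\thetat$ times $\Phi$; since $\|\px\tilde\Phi\|_{L^2}^2\le E_1$, $\|\px\thetat\|_{L^\infty}\lesssim\bar\delta(1+t)^{-1/2}$ and $\|\Phi\|_{L^2}^2\le CE_0\le C(\varepsilon^2+\bar\delta)(1+t)^{1/2}$, one gets $\|\phim\|_{L^2}^2\le C(\varepsilon^2+\bar\delta)(1+t)^{-1/2}$. The identities for $\varphim_i$ and $\zetam$ have right‑hand sides built from $\px\tilde\Psi_i,\px\tilde W,\px\tilde\Phi$ (bounded in $L^2$ by $E_1$) and Gaussian‑weighted copies of $\Phi,\px\Phi$ (bounded using $E_0$ and the $k=0$ bound just obtained), while the symbol $\approx$ hides only an $O(e^{-ct})$ $L^2$‑error by the non‑zero‑mode decay in \cref{apa}; this gives the $k=0$ estimate for all three components. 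Differentiating \cref{123456} once, using $\|\px^2\tilde V\|_{L^2}^2\le E_2$, the Gaussian decay of $\px\thetat,\px^2\thetat$, and the a priori smallness/decay of $(\phim,\varphim,\zetam)$ to absorb the quadratic remainders, yields the $k=1$ estimate. The $L^\infty$ bound is then Gagliardo–Nirenberg: $\|(\phim,\varphim,\zetam)\|_{L^\infty}^2\lesssim\|(\phim,\varphim,\zetam)\|_{L^2}\,\|\px(\phim,\varphim,\zetam)\|_{L^2}\le C(\varepsilon^2+\bar\delta)(1+t)^{-1}$.

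\textbf{Main obstacle.} The delicate part is Step 1. First, $K_{i-1}$ dominates $E_i$ only up to lower‑order terms with a decaying weight, so the weights must be iterated in the right order and the couplings $G_0,G_1$ on the right of \cref{22-1,33-1} must be absorbed using the dissipation integrals produced at the previous stage. Second, the term $\etab(1+t)^{-1}E_0$ in \cref{11-1} and the Gronwall step generate borderline logarithmic and $(1+t)^{C\etab}$ factors; these are harmless only because $\etab$ can be taken as small as needed (so $C\etab<\frac12$) and $\ln(1+t)\le C_\alpha(1+t)^\alpha$. Once Step 1 is settled, Step 2 is routine.
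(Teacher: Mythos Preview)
Your proposal is correct and follows essentially the same route as the paper: Gronwall on \cref{11-1} to get $\sum E_i\lesssim(\varepsilon^2+\bar\delta)(1+t)^{1/2}$, then multiply \cref{22-1} and \cref{33-1} by $(1+t)$ and $(1+t)^2$ respectively (using $E_i\lesssim K_{i-1}+\bar\delta(1+t)^{-1}E_{i-1}$ to control the extra terms via the previously obtained integral bounds on $K_i,G_i$), and finally transfer to $(\phim,\varphim,\zetam)$ through \cref{123456} and Gagliardo--Nirenberg. One small imprecision: in your Step~1 the extra term from the weight is $E_1+E_2$ (not $(1+t)^{-1}(E_1+E_2)$), and since \cref{22-1} only carries $K_1+K_2$ on the left, the $K_0$ contribution coming from $E_1\lesssim K_0+\cdots$ is handled not by absorption but via the already-established bound $\int_0^t K_0\,ds\lesssim(\varepsilon^2+\bar\delta)(1+t)^{1/2}$---your argument still closes, just with this bookkeeping correction.
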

	
	\begin{proof}
		In the sequel, we use the following fact which comes from \cref{sec-n-5}:
		\begin{align}\notag
			{E}_1\leq CK_0+C\deltab(1+t)^{-1}E_0,\qquad {E}_2\le CK_1+C\deltab(1+t)^{-1}K_0+C\deltab(1+t)^{-2}E_0.
		\end{align}
		By Gronwall's inequality and \cref{11-1}, one has
		\begin{align}\label{E0}
			\sum_{i=0}^2E_i\leq C(\varepsilon^2+\deltab)(1+t)^{\frac{1}{2}},\qquad\int_0^t\sum_{i=0}^2(K_i+G_{i})d\tau\leq C(\varepsilon^2+\deltab)(1+t)^{\frac{1}{2}}.
		\end{align}
		Multiplying \cref{22-1} by $(1+t)$ and then integrating on $[0,t]$, one has
		\begin{align}\notag
			(1+t)\sum_{i=1}^2E_i+\int_{0}^t(1+\tau)\sum_{i=0}^2(K_i+G_{i})d\tau\leq C(\varepsilon^2+\deltab)(1+t)^{\frac{1}{2}}.
		\end{align}
		Then it follows that
		\begin{align}\label{E1}
			\sum_{i=1}^2E_i\leq C(\varepsilon^2+\deltab)(1+t)^{-\frac{1}{2}},\qquad\int_{0}^t(1+\tau)\sum_{i=1}^2(K_i+G_{i})d\tau\leq C(\varepsilon^2+\deltab)(1+t)^{\frac{1}{2}}.
		\end{align}
		Finally, multiplying \cref{33-1} by $(1+t)^2$, one has
		\begin{align}\notag
			(1+t)^2E_2+\int_{0}^t(1+\tau)^2(K_2+G_{2})d\tau\leq C(\varepsilon^2+\deltab)(1+t)^{\frac{1}{2}},
		\end{align}
		ans it holds that
		\begin{align}\label{E2}
			E_2\leq C(\varepsilon^2+\deltab)(1+t)^{-\frac{3}{2}}.
		\end{align}
		Then, by \cref{sec-n-5,E0,E1,E2}, the desired decay rate can be obtained as   
		\begin{align}\notag
			\norm{(\phim,\psim,\mathring{w})}_{L^\infty}\leq C(\varepsilon+\deltab^{\frac{1}{2}})(1+t)^{-\frac{1}{2}}.
		\end{align}
		Using the same argument as \cref{123456}, one can prove \cref{lem-zeromode-decay}.
	\end{proof}

		\subsection{Estimates on non-zero mode of perturbations}
		This section describes the energy estimate for the non-zero mode. Using the  fact
		$\int_{\mathbb T^2}(\phi_{\neq},\psi_{\neq},\zeta_{\neq})dx_2dx_3=0$, it can be known that the Poincar\'e's inequality holds for the non-zero mode
		\begin{align}\label{2025.6.08-1}
			\|(\phi_{\neq},\varphi_{\neq},\zeta_{\neq})\|_{L^2}
			\leq C \|\nabla^k (\phi_{\neq},\varphi_{\neq},\zeta_{\neq})\|_{L^2},
		\end{align}
		where $k\ge 1$.
		Taking $\Dn$ for the perturbation system \cref{sys-pertur}, one has
		\begin{align}\label{eqs201}
			\begin{cases}
				\p_t\phi_{\neq}+\mathring{\uf} \cdot \nabla \phi_{\neq}+\rhom \operatorname{div} \varphi_{\neq}=\mathcal {F}_{0\neq}, \\
				\p_t\varphi_{\neq}+ \mathring{\uf} \cdot \nabla \varphi_{\neq}+\frac{R}{\rhot}\left(\thetam \nabla \phi_{\neq} + \rhom \nabla \zeta_{\neq}  \right)=
				\frac{\mu}{\rhot}\Delta\varphi_{\neq}+\frac{\lambda+\mu}{\rhot}\nabla\dv\varphi_{\neq}
				+\mathcal{F}_{\neq},\\
				\p_t \zeta_{\neq}+ \mathring{\uf} \cdot \nabla \zeta_{\neq}+R \thetam \operatorname{div} \varphi_{\neq}=\frac{\kappa}{\rhot}\Delta\zeta_{\neq}
				+\mathcal {F}_{4\neq},
			\end{cases}
		\end{align}
		where 
		\begin{align*}
			&\mathcal{F}_{0\neq}= \mathring{\uf} \cdot\nabla\phi_{\neq} - \Dn\left(\uf \cdot \nabla\phi\right) + \rhom \dv \varphi_{\neq}- \Dn \left(\rho\dv \varphi \right) -\varphi_{\neq} \cdot \nabla \rhot - \phi_{\neq} \dv \ut,\\
			&\mathcal{F}_{\neq}= \mathring{\uf} \cdot \nabla \varphi_{\neq}-\Dn \left(\uf \cdot \nabla \varphi \right) + \frac{R}{\rhot}\left(\thetam \nabla \phi_{\neq} + \rhom \nabla \zeta_{\neq}  \right) - \frac{\nabla p_{\neq}}{\rhot} - \varphi_{\neq} \cdot \nabla \tilde{\uf} + \frac{1}{\rhot}\Dn\left(\frac{\phi \nabla p}{\rho} \right) \\
			&\qquad\;+\Dn\left[  \left( \frac{1}{\rho}-\frac{1}{\rhot} \right) \left( \mu \Delta \uf +(\lambda+\mu ) \nabla\dv\uf \right) \right],\\
			&\mathcal{F}_{4\neq}=  \mathring{\uf} \cdot \nabla \zeta_{\neq}-\Dn \left(\uf \cdot \nabla \zeta \right) + R \thetam \dv \varphi_{\neq} - R \Dn \left( \theta \dv \varphi \right) - \varphi_{\neq} \cdot \nabla\thetat - R\zeta_{\neq}\dv \ut+\kappa\Dn \left[ \left( \frac{1}{\rho}-\frac{1}{\rhot}\right)\Delta\theta \right] \\
			&\qquad\quad+\Dn\left[ \frac{1}{\rho}\left( \frac{\mu}{2}\abs{\nabla \uf + (\nabla \uf)^{t}}^2+\lambda(\dv\uf)^2 \right) - \frac{1}{\rhot} \left( \frac{\mu}{2}\abs{\nabla \tilde{\uf} + (\nabla \tilde{\uf})^{t}}^2+\lambda(\dv \tilde{\uf})^2 \right)   \right].
		\end{align*}
		The source terms  $\mathcal{F}_{0\neq}$, $\mathcal{F}_{\neq}$ and $\mathcal{F}_{4\neq}$ satisfy the following estimates.
		\begin{Lem}\label{lem9}
			Under the same assumptions of \cref{Thm-ape}, it holds
			\begin{align*}
				\|\nabla\mathcal{ F}_{0\neq}\|_{L^2}^2 + \sum_{i=1}^4\|\mathcal {F}_{i\neq}\|_{L^2}^2\le C\bar{\eta}\|(\nabla\phi_{\neq},\nabla^2\varphi_{\neq},\nabla^2\zeta_{\neq})\|_{L^2}^2.
			\end{align*}
		\end{Lem}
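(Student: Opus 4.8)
The plan is to expand each source term in \cref{eqs201} into its zero and non-zero Fourier modes and to check that every resulting piece is a product of two factors, one of which carries a smallness $\chi$ or $\deltab$ in $L^\infty(\Omega)$ while the other is controlled by the right-hand norm $\norm{(\nabla\phi_{\neq},\nabla^2\varphi_{\neq},\nabla^2\zeta_{\neq})}_{L^2}$ (henceforth the \emph{dissipation norm}). Three elementary facts will be used throughout. (a) For any $f_{\neq}$ one has $\D_0\nabla f_{\neq}=0$ --- the $x_1$-derivative commutes with $\D_0$ and then kills the $\Torus^2$-mean, while the $x_2,x_3$-derivatives integrate to $0$ by periodicity --- hence $\Dn(g_0\nabla f_{\neq})=g_0\nabla f_{\neq}$ for any zero mode $g_0=g_0(x_1,t)$. (b) $\Dn(g_0 h)=g_0\Dn h$, and if $h$ is a smooth function of the state $(\rho,\uf,\theta)$ then $\Dn h$ vanishes at $(\phi_{\neq},\varphi_{\neq},\zeta_{\neq})=0$, so $\abs{\Dn h}\le C\abs{v_{\neq}}$; in particular the non-zero mode of a product of two zero modes is $0$. (c) The Poincar\'e inequality \cref{2025.6.08-1}, applied iteratively (since $\nabla f_{\neq}$ again has zero $\Torus^2$-mean), lets the dissipation norm dominate $\norm{\phi_{\neq}}_{L^2}+\norm{(\varphi_{\neq},\zeta_{\neq})}_{L^2}+\norm{\nabla(\varphi_{\neq},\zeta_{\neq})}_{L^2}$. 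The only analytic inputs are the a priori $H^3$-smallness $\norm{(\phi,\varphi,\zeta)}_{H^3}\le\chi$ from \cref{apa} --- which by Sobolev embedding on $\Omega$ gives $\norm{\px^j(\phim,\varphim,\zetam)}_{L^\infty}+\norm{v_{\neq}}_{L^\infty}+\norm{\nabla v_{\neq}}_{L^\infty}\le C\chi$ --- and the profile bounds $\abs{\px^j(\rhot,\tilde{\uf},\thetat)}\le C\deltab D_{-j/2}\le C\deltab$ from \cref{cd,tilde,2025-10-6}. Since $\deltab\le\etab^2$ and $\chi\le\etab$, squaring any bound of the form $C\etab\times(\text{dissipation norm})$ gives exactly the claim.

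First I would dispose of the convective ``commutator'' terms. Writing $\uf=\tilde{\uf}+\varphim+\varphi_{\neq}$, $\rho=\rhot+\phim+\phi_{\neq}$ (so that $\uf_{\neq}=\varphi_{\neq}$, $\rho_{\neq}=\phi_{\neq}$, $\theta_{\neq}=\zeta_{\neq}$) and using $\D_0\nabla\phi_{\neq}=0$, one obtains the exact identities
\begin{align*}
&\mr{\uf}\cdot\nabla\phi_{\neq}-\Dn(\uf\cdot\nabla\phi)=-\varphi_{\neq,1}\px\phim-\Dn(\varphi_{\neq}\cdot\nabla\phi_{\neq}),\\
&\rhom\dv\varphi_{\neq}-\Dn(\rho\dv\varphi)=-\phi_{\neq}\px\varphim_1-\Dn(\phi_{\neq}\dv\varphi_{\neq}),
\end{align*}
and analogous ones for $\mr{\uf}\cdot\nabla\varphi_{\neq}-\Dn(\uf\cdot\nabla\varphi)$, $\mr{\uf}\cdot\nabla\zeta_{\neq}-\Dn(\uf\cdot\nabla\zeta)$, and --- using $p=R\rho\theta$ and the same bookkeeping --- for the pressure difference $\frac{R}{\rhot}(\thetam\nabla\phi_{\neq}+\rhom\nabla\zeta_{\neq})-\frac{\nabla p_{\neq}}{\rhot}$. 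In every case the dangerous term that is merely linear in $v_{\neq}$ is exactly cancelled, and what remains is either (non-zero perturbation)$\times$(first derivative of a zero mode), bounded in $L^2$ by $C\etab\norm{v_{\neq}}_{L^2}$, or $\Dn$ of (non-zero perturbation)$\times$(gradient of non-zero perturbation), bounded by $\norm{v_{\neq}}_{L^\infty}\norm{\nabla v_{\neq}}_{L^2}\le C\chi\norm{\nabla v_{\neq}}_{L^2}$; Poincar\'e turns both into $C\etab\times(\text{dissipation norm})$.

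Next I would treat the profile terms $-\varphi_{\neq}\cdot\nabla\rhot$, $-\phi_{\neq}\dv\tilde{\uf}$, $-\varphi_{\neq}\cdot\nabla\tilde{\uf}$, $-\varphi_{\neq}\cdot\nabla\thetat$ and their zero-mode analogues, which are immediate since $\abs{\nabla(\rhot,\tilde{\uf},\thetat)}\le C\deltab$; and the viscous/heat error terms such as $\Dn[(\frac1\rho-\frac1{\rhot})(\mu\Delta\uf+(\lambda+\mu)\nabla\dv\uf)]$, together with $\frac1{\rhot}\Dn(\frac{\phi\nabla p}{\rho})$ and the quadratic-gradient terms $\Dn[\frac1\rho(\frac{\mu}{2}\abs{\nabla\uf+(\nabla\uf)^t}^2+\lambda(\dv\uf)^2)-\cdots]$. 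Here $\frac1\rho-\frac1{\rhot}=-\frac{\phi}{\rho\rhot}$ is itself a perturbation; splitting $\Delta\uf=(\Delta\tilde{\uf}+\px^2\varphim)+\Delta\varphi_{\neq}$ into its zero and non-zero modes, the zero-mode piece is pulled out of $\Dn$ as $(\Delta\tilde{\uf}+\px^2\varphim)\,\Dn(\frac{\phi}{\rho\rhot})$ with $\abs{\Dn(\frac{\phi}{\rho\rhot})}\le C\abs{\phi_{\neq}}$ by (b), hence $\le C\etab\norm{\phi_{\neq}}_{L^2}$, while the non-zero-mode piece is bounded by $\norm{\phi}_{L^\infty}\norm{\Delta\varphi_{\neq}}_{L^2}\le C\chi\norm{\nabla^2\varphi_{\neq}}_{L^2}$; the terms $\frac1{\rhot}\Dn(\frac{\phi\nabla p}{\rho})$ and the quadratic-gradient ones are of the same two types once the gradients are expanded into modes. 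This disposes of $\norm{\mathcal{F}_{0\neq}}_{L^2}^2+\sum_{i=1}^4\norm{\mathcal{F}_{i\neq}}_{L^2}^2$.

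Finally, for $\norm{\nabla\mathcal{F}_{0\neq}}_{L^2}$ I would differentiate $\mathcal{F}_{0\neq}$ once more. All but one type of term produced this way is again of the form handled above (second derivatives of the profiles are still $O(\deltab)$, and $\nabla^2\varphi_{\neq}$ already belongs to the dissipation norm). The exceptional one is $\varphi_{\neq}\cdot\nabla^2\phi_{\neq}$, arising from $\nabla\Dn(\varphi_{\neq}\cdot\nabla\phi_{\neq})$, because $\norm{\nabla^2\phi_{\neq}}_{L^2}$ is \emph{not} dominated by the dissipation norm. The resolution is to put the smallness on $\varphi_{\neq}$ and to invoke the a priori $H^3$ bound: $\norm{\varphi_{\neq}\cdot\nabla^2\phi_{\neq}}_{L^2}\le\norm{\varphi_{\neq}}_{L^\infty}\norm{\nabla^2\phi_{\neq}}_{L^2}\le C\norm{\nabla^2\varphi_{\neq}}_{L^2}\,\chi$, using $\norm{\varphi_{\neq}}_{L^\infty}\le C\norm{\nabla^2\varphi_{\neq}}_{L^2}$ (Poincar\'e together with Gagliardo--Nirenberg) and $\norm{\nabla^2\phi_{\neq}}_{L^2}\le\chi$; squaring gives $\le C\etab\norm{\nabla^2\varphi_{\neq}}_{L^2}^2$. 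This trade-off --- absorbing the uncontrolled $\norm{\nabla^2\phi_{\neq}}_{L^2}$ against the a priori smallness $\chi$ rather than against the dissipation --- is the only genuinely delicate point of the lemma; everything else is the lengthy but mechanical bookkeeping of the mode decomposition and the consistent assignment of the small $L^\infty$ factor. Collecting all the estimates proves \cref{lem9}.
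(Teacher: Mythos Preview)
Your proof is correct and follows the same strategy as the paper's own argument: decompose each source into zero and non-zero modes, place the small factor ($\chi$ or $\deltab$) in $L^\infty$, and absorb the remaining factor into the dissipation norm via the Poincar\'e inequality \cref{2025.6.08-1}. The paper's proof is a sketch that spells out only two sample terms (the pressure commutator and the viscous correction $\Dn[(\frac{1}{\rho}-\frac{1}{\rhot})\Delta\uf]$, split exactly as you do) and then asserts the rest is similar; your treatment of the $\varphi_{\neq}\cdot\nabla^2\phi_{\neq}$ term in $\nabla\mathcal{F}_{0\neq}$, trading $\norm{\nabla^2\phi_{\neq}}_{L^2}\le\chi$ against $\norm{\varphi_{\neq}}_{L^\infty}\le C\norm{\nabla^2\varphi_{\neq}}_{L^2}$, is a subtlety the paper does not make explicit but which is indeed needed.
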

		\begin{proof}
			We first make an estimation for $\mathcal{F}_{\neq}$. With respect to $\mathcal{F}$, we first calculate  $\frac{R}{\rhot}\left(\thetam \nabla \phi_{\neq} + \rhom \nabla \zeta_{\neq}  \right) - \frac{\nabla p_{\neq}}{\rhot}$  as
			\begin{align}\label{fluid-part-1}
				&\norm{\frac{R}{\rhot}\left(\thetam \nabla \phi_{\neq} + \rhom \nabla \zeta_{\neq}  \right) - \frac{\nabla p_{\neq}}{\rhot}}_{L^2} \notag \\
				\leq&\norm{ \frac{R}{\rhot}  \left(  \px \phim \zeta_{\neq} +\px \thetam \phi_{\neq}  \right) }_{L^2}+\sum_{i=1}^3\norm{\frac{R}{\rhot} \left(\p_i\phi_{\neq} \zeta_{\neq} +\p_i\zeta_{\neq}\phi_{\neq} \right)}_{L^2} \notag\\
				\leq& C \etab  \norm{(\nabla\phi_{\neq},\nabla^2 \varphi_{\neq},\nabla^2 \zeta_{\neq})
				}_{L^2} .
			\end{align}
			Then we carry out the calculation for $\Dn \left[ \left( \frac{1}{\rho}-\frac{1}{\rhot}\right) \left(\mu\Delta \uf + (\lambda+\mu) \nabla \dv\uf \right)\right]$ as
			\begin{align}\label{fluid-part-2}
				&\norm{\Dn \left[ \left( \frac{1}{\rho}-\frac{1}{\rhot}\right) \left(\mu\Delta \uf + (\lambda+\mu) \nabla \dv\uf \right)\right] }_{L^2} \notag\\
				\leq&\norm{ \Dn\left( \frac{1}{\rho}-\frac{1}{\rhot}\right) \left(\mu\Delta\mathring{\uf} +(\lambda+\mu) \nabla \dv \mathring{\uf} \right)}_{L^2} + \norm{ \Do \left( \frac{\phi}{\rho\rhot}\right) \left(\mu\Delta \varphi_{\neq} + (\lambda+\mu) \nabla \dv \varphi_{\neq} \right)}_{L^2} \notag\\
				&+\norm{\Dn \left[\left( \frac{1}{\rho}-\frac{1}{\rhot}\right)_{\neq} \left(\mu\Delta \varphi_{\neq} + (\lambda+\mu) \nabla \dv \varphi_{\neq} \right)\right]}_{L^2} \notag\\
				\leq& C\etab  \norm{(\nabla\phi_{\neq},\nabla^2 \varphi_{\neq},\nabla^2 \zeta_{\neq})}_{L^2}.
			\end{align}
			The calculation of the remaining terms in $\mathcal{F}_{i\neq}$ is similar to the calculation of the above two terms. Combining \cref{fluid-part-1} and \cref{fluid-part-2}, one has
			\begin{align}\label{F-m-111}
				&\norm{\mathcal{F}_{\neq}}_{L^2}^2\leq C \etab\| (\nabla\phi_{\neq},\nabla^2\varphi_{\neq},\nabla^2\zeta_{\neq}
				)\|_{L^2}^2.
			\end{align}
			The estimates of $\mathcal{F}_{0\neq}$ and $\mathcal{F}_{4\neq}$ are the same as those of $\mathcal{F}_{\neq}$. Thus, using the same calculation method as that for $\mathcal{F}_{\neq}$ \cref{F-m-111}, we obtain
			\begin{align}\label{fluid-and-non-fluid-2}
				\begin{aligned}
					\|\nabla\mathcal{ F}_{0\neq}\|_{L^2}^2 + \sum_{i=1}^4\|\mathcal {F}_{i\neq}\|_{L^2}^2\le C \etab\|( \nabla\phi_{\neq},\nabla^2\varphi_{\neq},\nabla^2\zeta_{\neq})
					\|_{L^2}^2.
				\end{aligned}
			\end{align}
			Then we have completed the proof of \cref{lem9}.
		\end{proof}
		
		Based on \cref{lem9}, we provide the estimate of equation \cref{eqs201}. 
		
		\begin{Lem}\label{lem99} 
			Under the same assumptions of \cref{Thm-ape}, it holds
			\begin{align}\label{eqs230}
				&\frac{d}{dt}\|(\phi_{\neq},\varphi_{\neq},\zeta_{\neq})\|_{H^1}^2+c\|(\nabla\phi_{\neq},\nabla^2\varphi_{\neq},\nabla^2\zeta_{\neq})\|_{L^2}^2 \leq  0.
			\end{align}
		\end{Lem}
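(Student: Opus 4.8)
The plan is to close \eqref{eqs230} by a weighted $H^1$ energy argument on the transversal system \eqref{eqs201}, in which the Poincar\'e inequality \eqref{2025.6.08-1} upgrades every undifferentiated norm of $(\phi_{\neq},\varphi_{\neq},\zeta_{\neq})$ to a norm appearing in the dissipation, so that nothing survives on the right-hand side.

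\emph{Step 1: basic weighted estimate.} Multiply \eqref{eqs201}$_1$ by $\frac{R\thetam}{\rhom}\phi_{\neq}$, \eqref{eqs201}$_2$ by $\rhot\varphi_{\neq}$, and \eqref{eqs201}$_3$ by $\frac{\rhom}{\thetam}\zeta_{\neq}$, and integrate over $\Omega$. The weights are chosen so that, after one integration by parts, the pressure--density coupling $R\thetam\nabla\phi_{\neq}\cdot\varphi_{\neq}$ cancels the term generated by $\rhom\dv\varphi_{\neq}$ in \eqref{eqs201}$_1$, and the pressure--temperature coupling $R\rhom\nabla\zeta_{\neq}\cdot\varphi_{\neq}$ cancels the term generated by $R\thetam\dv\varphi_{\neq}$ in \eqref{eqs201}$_3$, up to remainders containing $\nabla\thetam,\nabla\rhom$ which are $O(\bar{\eta})$ and decaying. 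The viscous terms produce $\mu\|\nabla\varphi_{\neq}\|_{L^2}^2+(\lambda+\mu)\|\dv\varphi_{\neq}\|_{L^2}^2+\kappa\|\nabla\zeta_{\neq}\|_{L^2}^2$, the transport terms $\mathring{\uf}\cdot\nabla$ integrate to $\tfrac12\dv\mathring{\uf}$ coefficients of size $O(\bar{\eta})$, and the forcing $\mathcal{F}_{0\neq},\mathcal{F}_{\neq},\mathcal{F}_{4\neq}$ is handled by \cref{lem9}. This yields $\frac{d}{dt}\|(\phi_{\neq},\varphi_{\neq},\zeta_{\neq})\|_{L^2}^2+c\|(\nabla\varphi_{\neq},\nabla\zeta_{\neq})\|_{L^2}^2\le C\bar{\eta}\|(\nabla\phi_{\neq},\nabla^2\varphi_{\neq},\nabla^2\zeta_{\neq})\|_{L^2}^2$. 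Applying $\nabla$ to \eqref{eqs201} and repeating verbatim with the same weights gives the analogous first-order inequality with dissipation $c\|(\nabla^2\varphi_{\neq},\nabla^2\zeta_{\neq})\|_{L^2}^2$ and the same type of right-hand side.

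\emph{Step 2: recovering the $\nabla\phi_{\neq}$ dissipation.} Since the density perturbation carries no dissipation in Step~1, I introduce the cross functional $\int_{\Omega}\nabla\phi_{\neq}\cdot\varphi_{\neq}\,dx$. Differentiating in $t$ and substituting $\p_t\phi_{\neq}$ from \eqref{eqs201}$_1$ (after an integration by parts) and $\p_t\varphi_{\neq}$ from \eqref{eqs201}$_2$, the term $-\int\frac{R\thetam}{\rhot}|\nabla\phi_{\neq}|^2$ appears with a favourable sign, giving dissipation $c\|\nabla\phi_{\neq}\|_{L^2}^2$; the contribution $\int\frac{\mu}{\rhot}\Delta\varphi_{\neq}\cdot\nabla\phi_{\neq}$ is bounded \emph{directly} by Young's inequality as $\tfrac{c}{8}\|\nabla\phi_{\neq}\|_{L^2}^2+C\|\nabla^2\varphi_{\neq}\|_{L^2}^2$ (no integration by parts, so $\nabla^2\phi_{\neq}$ never enters); the term $\int\rhom|\dv\varphi_{\neq}|^2$ and the $\nabla\zeta_{\neq}$ cross term are absorbed by $\|\nabla\varphi_{\neq}\|_{L^2}^2+\|\nabla\zeta_{\neq}\|_{L^2}^2$ from Step~1; and the remaining $\mathring{\uf}$- and $\mathcal{F}$-terms are $O(\bar{\eta})$ by \cref{lem9}. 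Setting $\mathcal{E}_{\neq}:=\|(\phi_{\neq},\varphi_{\neq},\zeta_{\neq})\|_{H^1}^2+\kappa_0\int_{\Omega}\nabla\phi_{\neq}\cdot\varphi_{\neq}\,dx$ with $\kappa_0>0$ small, $\mathcal{E}_{\neq}$ is equivalent to $\|(\phi_{\neq},\varphi_{\neq},\zeta_{\neq})\|_{H^1}^2$, the $\kappa_0 C\|\nabla^2\varphi_{\neq}\|_{L^2}^2$ produced by the cross term is absorbed into the Step~1 dissipation, and one obtains $\frac{d}{dt}\mathcal{E}_{\neq}+c\|(\nabla\phi_{\neq},\nabla^2\varphi_{\neq},\nabla^2\zeta_{\neq})\|_{L^2}^2\le C\bar{\eta}\|(\nabla\phi_{\neq},\nabla^2\varphi_{\neq},\nabla^2\zeta_{\neq})\|_{L^2}^2$.

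\emph{Step 3: closing, and the main obstacle.} Since $\bar{\eta}=\chi+\deltab^{1/2}$ is small, the right-hand side is absorbed into the dissipation, giving $\frac{d}{dt}\mathcal{E}_{\neq}+\tfrac{c}{2}\|(\nabla\phi_{\neq},\nabla^2\varphi_{\neq},\nabla^2\zeta_{\neq})\|_{L^2}^2\le0$; replacing $\mathcal{E}_{\neq}$ by the equivalent $\|(\phi_{\neq},\varphi_{\neq},\zeta_{\neq})\|_{H^1}^2$ yields \eqref{eqs230}. The main obstacle is the bookkeeping in Steps~1--2: one must choose the weights and the integrations by parts so that every dangerous coupling term cancels exactly, leaving only (a) profile-gradient remainders of size $O(\bar{\eta})D_{-\alpha}$, which become absorbable once paired with a perturbation norm and improved via \eqref{2025.6.08-1}, and (b) terms already controlled by \cref{lem9}; and, crucially, one must estimate $\int\frac{\mu}{\rhot}\Delta\varphi_{\neq}\cdot\nabla\phi_{\neq}$ without integrating by parts, since otherwise $\|\nabla^2\phi_{\neq}\|_{L^2}$---which is \emph{not} part of the dissipation in \eqref{eqs230}---would be generated.
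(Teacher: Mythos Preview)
Your proposal is correct and follows essentially the same three-ingredient strategy as the paper: a symmetrizer-weighted $L^2$ estimate, a cross functional to recover $\|\nabla\phi_{\neq}\|_{L^2}^2$, and the analogous first-order estimate, combined and closed via \cref{lem9} and the smallness of $\bar\eta$. The only notable technical difference is how the viscous cross-term $\int\Delta\varphi_{\neq}\cdot\nabla\phi_{\neq}$ is handled in Step~2: the paper pairs $\eqref{eqs201}_2\times\rhot\nabla\phi_{\neq}$ with $\nabla\eqref{eqs201}_1\times\frac{\lambda+2\mu}{\rhom}\nabla\phi_{\neq}$ so that this term cancels exactly (leaving only $\|\nabla\varphi_{\neq}\|^2+\|\nabla\zeta_{\neq}\|^2$ on the right), whereas you bound it directly by Young and absorb $\kappa_0 C\|\nabla^2\varphi_{\neq}\|^2$ into the first-order dissipation by choosing $\kappa_0$ small; both routes close.
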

		\begin{proof}
			
			{\bf Step 1.} Multiplying \cref{eqs201}$_1$ by $R\frac{\thetam}{\rhom \rhot}\phi_{\neq}$, \cref{eqs201}$_2$ by $\varphi_{\neq}$, \cref{eqs201}$_3$ by $\frac{\rhom}{\thetam \rhot}\zeta_{\neq}$, one has
			\begin{align}\label{eqs210}
				\p_t\left(\frac{R\thetam}{2\rhom \rhot}\phi_{\neq}^2+\frac{|\varphi_{\neq}|^2}{2}+\frac{\rhom}{2\thetam \rhot}\zeta_{\neq}^2\right)+\frac{\mu}{\rhot}|\nabla\varphi_{\neq}|^2+\frac{\lambda+\mu}{\rhot}|\dv\varphi_{\neq}|^2+\frac{\rhom \kappa}{\thetam \rhot^2}|\nabla\zeta_{\neq}|^2=\mathcal J_0+\dv(\cdots),
			\end{align}
			where
			\begin{align*}
				\mathcal J_0:=&\left[\left(\frac{\thetam}{\rhom \rhot}\right)_t+\nabla\cdot\left(\frac{\thetam\mathring{\uf}}{\rhom \rhot}\right)\right]\frac{R\phi_{\neq}^2}{2}+\left[\left(\frac{\rhom}{\thetam \rhot}\right)_t+\nabla\cdot\left(\frac{\rhom\mathring{\uf}}{\thetam \rhot}\right)\right]\frac{\zeta_{\neq}^2}{2}+\frac{\px \um_1}{2} \abs{\varphi_{\neq}}^2 +\frac{R\thetam}{\rhom \rhot}\phi_{\neq} \mathcal{F}_{0\neq}+ \frac{\rhom}{\thetam \rhot} \zeta_{\neq} \mathcal{F}_{4\neq} \notag\\
				&-\px\left(\frac{1}{\rhot} \right)\left(\mu\sum_{j=1}^3\px\varphi_{j\neq}\varphi_{j\neq}+(\lambda+\mu)\dv\varphi_{\neq}\varphi_{1\neq}\right)-\px \left(\frac{\kappa\rhom}{\rhot^2\thetam}\right)\px\zeta_{\neq}\zeta_{\neq} + \varphi_{\neq} \cdot \mathcal{F}_{\neq}.
			\end{align*}
			According to  {\it a priori} assumptions \cref{apa} and applying H{\"o}lder's inequality, one has
			\begin{align}\label{eqs212}
				\int_{\Omega} \mathcal{J}_0 dx \leq C\etab \norm{(\nabla \phi_{\neq},\nabla \varphi_{\neq} ,\nabla \zeta_{\neq})}_{L^2}^2 + C \sum_{j=0}^4 \norm{\mathcal{F}_{j\neq}}_{L^2}^2.
			\end{align}
			Integrating \cref{eqs210} on $\Omega$ and combining \cref{eqs212}, it yields
			\begin{align}\label{eqs213}
				\frac{d}{dt} \norm{(\phi_{\neq},\varphi_{\neq},\zeta_{\neq})}_{L^2}^2+\norm{(\nabla \varphi _{\neq},\nabla \zeta _{\neq})}_{L^2}^2 \leq  C\etab\norm{\nabla\phi_{\neq}}_{L^2}^2+C\sum_{i=0}^4\norm{\mathcal{F}_{i\neq}}_{L^2}^2 .
			\end{align}
			
			{\bf Step 2.} We still need to estimate $\|\nabla\phi_{\neq}\|^2$. Taking \cref{eqs201}$_2\times\rhot\nabla\phi_{\neq}+\nabla{\cref{eqs201}_1}\times\frac{\lambda+2\mu}{\rhom}\nabla\phi_{\neq},$ one has
			\begin{align}\label{eqs220}
				&\p_t\left(\frac{\lambda+2\mu}{2\rhom}|\nabla\phi_{\neq}|^2+\rhot\varphi_{\neq}\cdot\nabla\phi_{\neq}\right)+R\thetam|\nabla\phi_{\neq}|^2=\mathcal J_1+\dv(\cdots),
			\end{align}
			where 
			\begin{align*}
				\mathcal J_1=&-\rhot \nabla \phi_{\neq}\cdot\mathring{\uf}\cdot\nabla\varphi_{\neq}-R\rhom\nabla\phi_{\neq}\cdot\nabla\zeta_{\neq}+\p_t\rhot \varphi_{\neq}\cdot\nabla\phi_{\neq}+\dv(\rhot\varphi_{\neq})\mathring{\uf}\cdot\nabla\phi_{\neq}+\dv(\rhot \varphi_{\neq})\rhom\dv \varphi_{\neq} \notag\\
				&+\p_t \left(  \frac{\lambda+2\mu}{2\rhom} \right)\abs{\nabla \phi_{\neq}}^2-(\lambda+2\mu)\frac{\px \mathring{\uf}}{\rhom}\cdot\nabla\phi_{\neq} \px \phi_{\neq} + (\lambda+2\mu)\px\left( \frac{\um_1}{\rhom} \right)\abs{\nabla \phi_{\neq}}^2+\rhot\varphi_{\neq}\cdot\nabla\mathcal{F}_{0\neq} \notag\\
				&+\frac{\lambda+2\mu}{\rhom}\nabla\phi_{\neq}\cdot \nabla \mathcal{F}_{0\neq}-(\lambda+2\mu)\frac{\px \rhom \dv \varphi_{\neq}}{\rhom}\px\phi_{\neq}+\rhot \mathcal{F}_{\neq}\cdot\nabla\phi_{\neq},
			\end{align*}
			where we have used
			\begin{align*}
				&\rhot \p_t \varphi_{\neq}\cdot\nabla\phi_{\neq}=\p_t (\rhot \varphi_{\neq}\cdot\nabla\phi_{\neq})-\p_t \rhot \varphi_{\neq}\cdot\nabla\phi_{\neq}-\rhot\varphi_{\neq}\cdot\nabla\p_t\phi_{\neq}\\
				=&\p_t(\rhot\varphi_{\neq}\cdot\nabla \phi_{\neq})-\p_t \rhot \varphi_{\neq}\cdot \nabla\phi_{\neq}-\dv(\rhot \varphi_{\neq}) \mathring{\uf}\cdot\nabla\phi_{\neq}-\dv(\rhot \varphi_{\neq})\rhom\dv \varphi_{\neq}-\rhot\varphi_{\neq}\cdot \nabla \mathcal{F}_{0\neq}.
			\end{align*}
			According to  {\it a priori} assumptions \cref{apa} and applying H{\"o}lder's inequality, one has
			\begin{align}\label{eqs222}
				\int_{\Omega}\mathcal J_1dx\le C\etab\|\nabla\phi_{\neq}\|_{L^2}^2 + C \left(\norm{\nabla \varphi_{\neq}}_{L^2}^2+\norm{ \nabla\zeta_{\neq}}_{L^2}^2 + \norm{\nabla \mathcal{F}_{0\neq}}_{L^2}^2 + \norm{\mathcal{F}_{\neq}}_{L^2}^2\right).
			\end{align}  
			Integrating \cref{eqs220} on $\Omega$ and combining \cref{eqs222}, it yields
			\begin{align}\label{eqs223}
				\frac{d}{dt}\int_{\Omega}&\left(\frac{\lambda+2\mu}{2\rhom}|\nabla\phi_{\neq}|^2+\rhot\varphi_{\neq}\cdot\nabla\phi_{\neq}\right)dx+\int_{\Omega}\frac{R\thetam}{2}|\nabla\phi_{\neq}|^2dx \notag\\
				&\le   C \left(\norm{\nabla \varphi_{\neq}}_{L^2}^2+\norm{ \nabla\zeta_{\neq}}_{L^2}^2 + \norm{\nabla \mathcal{F}_{0\neq}}_{L^2}^2 + \norm{\mathcal{F}_{\neq}}_{L^2}^2\right).
			\end{align}
			
			{\bf Step 3.} We estimate $\nabla\varphi_{\neq}$ and $\nabla\zeta_{\neq}$. Taking $\nabla\cref{eqs201}_1\times\frac{R\thetam}{\rhom \rhot}\nabla\phi_{\neq}+\nabla\cref{eqs201}_2\times\nabla\varphi_{\neq}+\nabla\cref{eqs201}_3\times\frac{\rhom}{\thetam\rhot}\nabla\zeta_{\neq},$ one has
			\begin{align}\label{eqs230-1}
				\p_t\Big(\frac{R\thetam}{2\rhom \rhot}\abs{\nabla\phi_{\neq}}&^2+\frac{|\nabla\varphi_{\neq}|^2}{2}+\frac{\rhom}{2\thetam \rhot}\abs{\nabla\zeta_{\neq}}^2\Big)+\frac{\mu}{\rhot}|\Delta\varphi_{\neq}|^2 \notag\\
				&+\frac{\lambda+\mu}{\rhot}|\nabla\dv\varphi_{\neq}|^2
				+\frac{\rhom \kappa}{\thetam \rhot^2}|\Delta\zeta_{\neq}|^2=\mathcal J_2+\dv(\cdots),
			\end{align}
			where 
			\begin{align*}
				\mathcal{J}_{2}=&\left[\left(\frac{\thetam}{\rhom \rhot}\right)_t+\nabla\cdot\left(\frac{\thetam\mathring{\uf}}{\rhom \rhot}\right)\right]\frac{R\abs{\nabla\phi_{\neq}}^2}{2}+\left[\left(\frac{\rhom}{\thetam \rhot}\right)_t+\nabla\cdot\left(\frac{\rhom\mathring{\uf}}{\thetam \rhot}\right)\right]\frac{\abs{\nabla\zeta_{\neq}}^2}{2}+\frac{\px \um_1}{2} \abs{\nabla\varphi_{\neq}}^2 \notag\\
				&-\frac{R\thetam\px\rhom}{\rhom \rhot} \dv \varphi_{\neq}\px \phi_{\neq}-\frac{R\rhom\px \thetam}{\thetam \rhot} \dv \varphi_{\neq}\px \zeta_{\neq} - \frac{R\thetam}{\rhom \rhot}\px \mathring{\uf} \cdot \nabla \phi_{\neq}\px \phi_{\neq} -\px \mathring{\uf} \cdot \nabla \varphi_{\neq} \cdot\px \varphi_{\neq}  \notag\\
				&-\frac{\rhom}{\thetam \rhot}\px \mathring{\uf} \cdot \nabla \zeta_{\neq}\px \zeta_{\neq} - R\px\left( \frac{\thetam}{\rhot} \right)\px\varphi_{\neq} \cdot \nabla \phi_{\neq} + R\px\left( \frac{\thetam}{\rhot} \right) \nabla \varphi_{1\neq} \cdot \nabla \phi_{\neq}+\frac{R\thetam}{\rhom \rhot}\nabla\phi_{\neq}\cdot \nabla \mathcal{F}_{0\neq} \notag\\
				&-R\px\left( \frac{\rhom}{\rhot} \right)\px \varphi_{\neq} \cdot \nabla \zeta_{\neq} + R\px\left( \frac{\rhom}{\rhot} \right) \nabla \varphi_{1\neq} \cdot \nabla \zeta_{\neq} - \px\left(\frac{\lambda+\mu}{\rhot} \right) \dv \varphi_{\neq} \Delta \varphi_{1\neq} + \nabla\varphi_{\neq} \cdot \nabla\mathcal{F}_{\neq} \notag\\
				&- \px\left(\frac{\lambda+\mu}{\rhot} \right) \dv \varphi_{\neq}\px \dv \varphi_{\neq} - \frac{\kappa}{\rhot}\px \left(\frac{\rhom}{\thetam\rhot} \right)\px \zeta_{\neq} \Delta \zeta_{\neq} + \frac{\rhom}{\thetam \rhot} \nabla\zeta_{\neq} \cdot \nabla\mathcal{F}_{4\neq}. 
			\end{align*}
			According to  {\it a priori} assumptions \cref{apa} and applying H{\"o}lder's inequality, one has
			\begin{align}\label{eqs232}
				\int_{\Omega}\mathcal J_2dx\le C\etab \left(\|\nabla\phi_{\neq}\|_{L^2}^2 + \norm{\nabla^2 \varphi_{\neq}}_{L^2}^2+\norm{ \nabla^2\zeta_{\neq}}_{L^2}^2 \right)+ C \left(\norm{\nabla \mathcal{F}_{0\neq}}_{L^2}^2 +\sum_{i=1}^4 \norm{\mathcal{F}_{i\neq}}_{L^2}^2\right).
			\end{align}  
			Integrating \cref{eqs230-1} on $\Omega$ and combining \cref{eqs232}, it yields
			\begin{align}\label{eqs233}
				\frac{d}{dt}& \norm{(\nabla \phi_{\neq},\nabla \varphi_{\neq},\nabla \zeta_{\neq})}_{L^2}^2 + \norm{(\nabla^2 \varphi_{\neq},\nabla^2 \zeta_{\neq})}_{L^2}^2
				\leq C\etab \|\nabla\phi_{\neq}\|_{L^2}^2 + C \left(\norm{\nabla \mathcal{F}_{0\neq}}_{L^2}^2 +\sum_{i=1}^4 \norm{\mathcal{F}_{i\neq}}_{L^2}^2\right).
			\end{align}
			Combining \cref{lem9}, \cref{eqs213}, \cref{eqs223},  and \cref{eqs233}, we have completed the proof of \cref{lem99}.
		\end{proof}

		\begin{flushleft}
			\textbf{Decay rate for the non-zero modes.}
		\end{flushleft}
		\begin{Lem}\label{lem-non-zeromode-decay}
			Under the same assumptions as \cref{Thm-ape}, one has the following estimates for the non-zero modes
			\begin{align}\notag
				\norm{(\phi_{\neq},\varphi_{\neq},\zeta_{\neq})}_{H^1}\leq \varepsilon e^{-ct}.
			\end{align}
		\end{Lem}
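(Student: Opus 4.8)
The plan is to convert the dissipative differential inequality of \cref{lem99} into a closed Gronwall inequality for $\|(\phi_{\neq},\varphi_{\neq},\zeta_{\neq})\|_{H^1}^2$ by invoking the Poincar\'e inequality \cref{2025.6.08-1}, and then to integrate in time and insert the initial smallness from \cref{mt}. All the analytic work has already been carried out in \cref{lem9,lem99}; what remains is essentially an ODE comparison.

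First I would upgrade \cref{2025.6.08-1} so that it also covers derivatives: since the projection $\D_{\neq}$ commutes with $\nabla$, every component of $\nabla\varphi_{\neq}$ and $\nabla\zeta_{\neq}$ again has vanishing average over $\Torus^2$, so \cref{2025.6.08-1} applies to them as well. Using it once for $\phi_{\neq}$ and with $k=2$ (or once applied to $\nabla\varphi_{\neq},\nabla\zeta_{\neq}$) for $\varphi_{\neq},\zeta_{\neq}$, one obtains
\begin{align*}
\|\phi_{\neq}\|_{L^2}\le C\|\nabla\phi_{\neq}\|_{L^2},\qquad \|\varphi_{\neq}\|_{H^1}\le C\|\nabla^2\varphi_{\neq}\|_{L^2},\qquad \|\zeta_{\neq}\|_{H^1}\le C\|\nabla^2\zeta_{\neq}\|_{L^2},
\end{align*}
whence $\|(\phi_{\neq},\varphi_{\neq},\zeta_{\neq})\|_{H^1}^2\le C\|(\nabla\phi_{\neq},\nabla^2\varphi_{\neq},\nabla^2\zeta_{\neq})\|_{L^2}^2$. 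The asymmetry in the dissipation of \cref{lem99} (first order in $\phi_{\neq}$, second order in $\varphi_{\neq},\zeta_{\neq}$) is harmless precisely because the second-order control of $\varphi_{\neq},\zeta_{\neq}$ dominates their full $H^1$ norms, while $\phi_{\neq}$ only needs its gradient. Substituting this bound into \cref{eqs230} produces, for some universal constant $c_1>0$,
\begin{align*}
\frac{d}{dt}\|(\phi_{\neq},\varphi_{\neq},\zeta_{\neq})\|_{H^1}^2+c_1\|(\phi_{\neq},\varphi_{\neq},\zeta_{\neq})\|_{H^1}^2\le 0.
\end{align*}

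Gronwall's inequality then gives $\|(\phi_{\neq},\varphi_{\neq},\zeta_{\neq})(t)\|_{H^1}^2\le e^{-c_1 t}\|(\phi_{0\neq},\varphi_{0\neq},\zeta_{0\neq})\|_{H^1}^2$, and since $\D_{\neq}$ is bounded on $H^1(\Omega)$ the right-hand side is controlled by $\|(\phi_0,\varphi_0,\zeta_0)\|_{H^3(\Omega)}^2\le\varepsilon^2$ from the hypotheses of \cref{mt}. Therefore $\|(\phi_{\neq},\varphi_{\neq},\zeta_{\neq})(t)\|_{H^1}\le\varepsilon e^{-\frac{c_1}{2}t}$, which is the asserted estimate with $c=c_1/2$; in particular this strictly improves the second line of the {\it a priori} assumption \cref{apa}, as needed for the continuity argument behind \cref{Thm-ape}. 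There is no genuine obstacle at this stage — the only point requiring a little care is applying the Poincar\'e inequality at the correct differentiation orders so that the entire $H^1$ energy, and not just part of it, is absorbed by the (partly second-order) dissipation delivered by \cref{lem99}.
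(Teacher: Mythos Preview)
Your proposal is correct and follows exactly the paper's own approach: apply the Poincar\'e inequality \cref{2025.6.08-1} to convert the dissipation in \cref{eqs230} into a damping term for the full $H^1$ norm, then use Gr\"onwall's inequality together with the initial smallness. In fact you spell out more carefully than the paper does the point that Poincar\'e must be applied at matching orders (once for $\phi_{\neq}$, twice for $\varphi_{\neq},\zeta_{\neq}$) so that the asymmetric dissipation $\|(\nabla\phi_{\neq},\nabla^2\varphi_{\neq},\nabla^2\zeta_{\neq})\|_{L^2}^2$ controls the entire $H^1$ energy.
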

		\begin{proof}
			Note that the Poincar\'e inequality holds for $(\phi_{\neq},\varphi_{\neq},\zeta_{\neq})$ in \cref{2025.6.08-1}. Then, applying Gr{\"o}nwall's inequality to \cref{eqs230}, we have completed the proof of \cref{lem-non-zeromode-decay}.
		\end{proof}
		
		\subsection{Higher order estimates}
		The main objective of this section is to make energy estimates in the original perturbation equations and to verify the {\it a priori} assumptions.
		To close the {\it a priori} estimates, we need to return back to the original system \cref{sys-pertur}, that is
		\begin{equation}\label{sys-pertur1}
			\left\{
			\begin{aligned}
				&\pt\phi+\uf \cdot\nabla\phi+\rho div\varphi=\hat{R}_1,\\
				&\rho\pt\varphi+\rho\uf \cdot\nabla\varphi+R(\T\nabla\phi+\rho\nabla\zeta)=\mu \triangle\varphi+(\mu+\lambda)  \nabla div\varphi+\hat{\mathbf{R}}_2,\\
				&\rho\pt\zeta+\rho\uf \cdot\nabla\zeta+ R\rho\T div\varphi=\kappa \triangle\zeta+\hat{R}_3.
			\end{aligned}
			\right.
		\end{equation}
		Note that the energy estimates for the zero- and first-order derivatives are already done. In order to verify the {\it a priori} assumptions, we only need to perform further energy estimates for the second- and third-order derivatives. 
		
		We emphasise the strategy of proof. Since we already have estimates of lower order, it is most important to note the sign of the energy of the highest order derivative when estimating.
		Applying $\partial_{j} \partial_{i}(i, j=1,2,3)$ to the mass equation \cref{sys-pertur1}$_{1}$ and $\partial_{j}(j=$ $1,2,3$ ) to the $i$-th $(i=1,2,3)$ component of the momentum equation \cref{sys-pertur1}$_{2}$ and $\partial_{j}(j=$ $1,2,3$ ) to the energy equation \cref{sys-pertur1}$_{3}$, we have
		\begin{align}\label{eq-2or}
			\left\{\begin{aligned}
				&\pt\p_{ij}\phi+\uf \cdot\nabla\p_{ij}\phi+\rho div\p_{ij}\varphi=\p_{ij}\hat{R}_1+\hat{Q}_1,\\
				&\rho\pt\p_{j}\varphi+\rho\uf \cdot\nabla\p_{j}\varphi+R(\T\nabla\p_{j}\phi+\rho\nabla\p_{j}\zeta)=\mu \triangle\p_{j}\varphi+(\mu+\lambda)  \nabla div\p_{j}\varphi+\p_{j}\hat{\mathbf{R}}_2+\hat{Q}_2,\\
				&\rho\pt\p_{j}\zeta+\rho\uf \cdot\nabla\p_{j}\zeta+ R\rho\T div\p_{j}\varphi=\kappa \triangle\p_{j}\zeta+\p_{j}\hat{R}_3+\hat{Q}_3,
			\end{aligned}
			\right.
		\end{align}
		where 
		\begin{align*}
			\begin{aligned}
				\hat{Q}_1:=&-\p_{ij}\left(\uf\cdot\nabla\phi+\rho div\varphi\right)+\left(\uf \cdot\nabla\p_{ij}\phi+\rho div\p_{ij}\varphi\right),\\
				\hat{Q}_2:=&-\p_{j}\rho\pt\varphi-\p_{j}\left(\rho \uf\right) \cdot\nabla\varphi+R(\p_j\T\nabla\phi+\p_j\rho\nabla\zeta),\\
				\hat{Q}_3:=&-\p_{j}\rho\pt\zeta-\p_{j}(\rho \uf) \cdot\nabla\zeta- R\p_{j}(\rho\T) div\varphi.
			\end{aligned}
		\end{align*}
		By the {\it a priori} assumptions \cref{apa}, we have the result
		\begin{align}\label{sec-6-1}
			\|\nabla^k\hat{Q}_i\|_{L^2}\leq C(\bar{\delta}+\chi)\|\nabla(\phi,\varphi,\zeta)\|_{H^{k+1}},\qquad i=1,2,3.
		\end{align}
		It remains to estimate $\hat{R}_{i}$, i=1,2,3 \cref{2025.6.02-8}. Note that the highest order of derivatives for $(\phi,\varphi,\zeta)$ is zero order in $\hat{R}_1$, $\hat {\mathbf{R}}_2$, first order in $\hat{R}_3$. Thus, by {\it a priori} assumption \cref{apa}, one has
		\begin{align}\label{sec-6-2}
			&\|\nabla^k(\hat{R}_1,\hat{\mathbf{R}}_2)\|_{L^2}\leq C\bar{\delta} (1+t)^{-\frac{5}{2} -k}+C\bar{\eta} (1+t)^{-\frac{k-l+1}{2}}\sum_{l=0}^k\big\|\nabla^l(\phi,\varphi,\zeta)\big\|_{L^2},\\ 
			&\|\nabla^k\hat{R}_3\|_{L^2}\leq C\|\nabla^k(\hat{R}_1,\hat{\mathbf{R}}_2)\|_{L^2}+C\chi\big\|\nabla\varphi\big\|_{H^k(\Omega)}+C\bar{\eta} (1+t)^{-\frac{k-l+1}{2}}\sum_{l=0}^k\big\|\nabla^{l+1}\varphi\big\|_{L^2}.\label{sec-6-3}
		\end{align}
		Then we have the following result.
		
		\begin{Lem}\label{lem-original}Under the same assumptions of \cref{Thm-ape}, one has
			\begin{align}\notag
				\begin{aligned}
					&\|\nabla^2(\phi,\varphi,\zeta)\|_{H^1}^2\Big|_0^t+ \int_0^t 
					\|(\nabla^2\phi,\nabla^3 \varphi,\nabla^3\zeta)\|_{H^1}^2d\tau
					\leq C\bar{\delta}.
				\end{aligned}
			\end{align}
		\end{Lem}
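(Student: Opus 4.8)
The argument is the standard weighted high‑order energy estimate on the original perturbation system \cref{sys-pertur1}, performed at the second and third derivative levels exactly in the spirit of the non‑zero‑mode computation \cref{lem99}. The crucial simplification at this stage is that the decay of the zeroth‑ and first‑order quantities is already in hand from \cref{nzm,lem-zeromode-decay,lem-non-zeromode-decay}, so here one only needs a uniform bound of size $C\bar{\delta}$, which keeps the argument free of any Gronwall loss.

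\textbf{Step 1: high‑order energy identities.} For $k=2,3$ I apply $k$ derivatives to the momentum and energy equations of \cref{sys-pertur1} and, following the scheme of \cref{eq-2or}, one extra derivative to the continuity equation, and I pair the resulting equations with the symmetrising multipliers $R\,\T\,\rho^{-2}\nabla^{k}\phi$, $\nabla^{k}\varphi$, $\T^{-1}\nabla^{k}\zeta$. Integrating over $\Omega$ and summing, the hyperbolic coupling cancels and I obtain
\[
\frac{d}{dt}\,\mathcal{A}_{k}(t)+c\big(\|\nabla^{k+1}\varphi\|_{L^{2}}^{2}+\|\nabla^{k+1}\zeta\|_{L^{2}}^{2}\big)\le \mathcal{S}_{k}(t),
\]
with $\mathcal{A}_{k}\approx\|\nabla^{k}(\phi,\varphi,\zeta)\|_{L^{2}}^{2}$ and $\mathcal{S}_{k}$ collecting the commutators $\hat{Q}_{i}$ and their derivatives from \cref{eq-2or}, the terms $\nabla^{k}\hat{R}_{i}$ from \cref{2025.6.02-8}, and the harmless contributions from differentiating the coefficients $\rho,\T,\uf$ (smooth and bounded since $\rho=\phi+\tilde{\rho}\ge m/2>0$ by \cref{Thm-local}). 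Since the continuity equation carries no intrinsic dissipation, I recover $\|\nabla^{k}\phi\|_{L^{2}}^{2}$ via the cross‑term device of Step~2 of \cref{lem99}: this yields $\frac{d}{dt}\mathcal{B}_{k}(t)+c\|\nabla^{k}\phi\|_{L^{2}}^{2}\le\mathcal{S}_{k}'(t)$, where $\mathcal{B}_{k}$ is a correction of the type $\int_{\Omega}\big(\tfrac{\lambda+2\mu}{2\rho}|\nabla^{k}\phi|^{2}+\rho\,\nabla^{k-1}\varphi\cdot\nabla^{k}\phi\big)\,dx$, comparable to $\|\nabla^{k}\phi\|_{L^{2}}^{2}$ up to a decaying lower‑order additive.

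\textbf{Step 2: the source terms.} The commutators obey $\|\nabla^{j}\hat{Q}_{i}\|_{L^{2}}\le C(\bar{\delta}+\chi)\|\nabla(\phi,\varphi,\zeta)\|_{H^{j+1}}$ by \cref{sec-6-1}, so by Young's inequality they contribute $(\bar{\delta}+\chi)$ times the full dissipation at the current level, absorbed into the left‑hand side. For $\nabla^{k}\hat{R}_{i}$ I use \cref{sec-6-2,sec-6-3}: the ansatz‑error part is bounded by $C\bar{\delta}(1+t)^{-5/2-k}$ and is trivially time‑integrable; the genuinely nonlinear part — after integrating by parts to move a derivative off the highest‑order factor and using Gagliardo--Nirenberg to place a low‑order factor in $L^{\infty}$ — carries an extra small factor $\chi$ or $\etab=\chi+\bar{\delta}^{1/2}$ and is absorbed as well; and the remainder involves only lower‑order norms times $(1+t)^{-j/2}$ with $j\ge1$, so that, using $\|\nabla^{l}(\phi,\varphi,\zeta)\|_{L^{2}}^{2}\le C\bar{\delta}(1+t)^{-(2l+1)/2}$ for $l=0,1$ from \cref{lem-zeromode-decay,lem-non-zeromode-decay}, these products are dominated by $C\bar{\delta}(1+t)^{-3/2}$ and are time‑integrable. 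Whenever an intermediate norm such as $\|\nabla^{2}\varphi\|_{L^{2}}^{2}$ appears that is not directly in the dissipation, it is split by interpolation into a piece absorbed by the top‑order dissipation with a small coefficient and a piece multiplied by a decaying lower‑order norm.

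\textbf{Step 3: conclusion.} Adding a large multiple of the $\mathcal{A}_{k}$‑estimate to the $\mathcal{B}_{k}$‑estimate for $k=2,3$ and setting $\mathcal{E}_{\mathrm{high}}(t):=\|\nabla^{2}(\phi,\varphi,\zeta)\|_{H^{1}}^{2}+\sum_{k=2,3}\mathcal{B}_{k}(t)$ (positive and comparable to $\|\nabla^{2}(\phi,\varphi,\zeta)\|_{H^{1}}^{2}$ for $\bar{\delta}$ small, by the same positivity considerations as in \cref{sec-n-5}), I arrive at
\[
\frac{d}{dt}\,\mathcal{E}_{\mathrm{high}}+c\,\|(\nabla^{2}\phi,\nabla^{3}\varphi,\nabla^{3}\zeta)\|_{H^{1}}^{2}\le C(\etab+\chi)\,\|(\nabla^{2}\phi,\nabla^{3}\varphi,\nabla^{3}\zeta)\|_{H^{1}}^{2}+C\bar{\delta}(1+t)^{-3/2}.
\]
Absorbing the first term on the right into the dissipation (legitimate since $\etab+\chi$ is small) and integrating on $[0,t]$, together with $\mathcal{E}_{\mathrm{high}}(0)\le C\|(\phi_{0},\varphi_{0},\zeta_{0})\|_{H^{3}}^{2}\le C\varepsilon^{2}\le C\bar{\delta}$ from the assumptions of \cref{mt}, gives the stated bound.

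\textbf{Main difficulty.} The delicate part is the bookkeeping of Step~2: because the density carries no dissipation, every top‑order occurrence of $\nabla^{k}\phi$ must be either exchanged through the cross term of Step~1 or integrated by parts against the velocity/temperature dissipation; and each nonlinear source term must be decomposed so that precisely one factor goes into $L^{\infty}$ to create the $\etab$‑smallness allowing absorption, while all surviving factors remain controlled either by $\|(\nabla^{2}\phi,\nabla^{3}\varphi,\nabla^{3}\zeta)\|_{H^{1}}$ or by the already established decaying lower‑order norms. Keeping the slowly decaying ansatz errors paired with their Gaussian‑in‑time weights, so that their time integrals stay $O(\bar{\delta})$, is the other point requiring care.
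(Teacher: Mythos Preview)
Your proposal is correct and follows essentially the same approach as the paper: both rely on the standard high-order energy estimate for \cref{sys-pertur1} with the density cross-term device (your $\mathcal{B}_{k}$, the paper's pairing of \cref{eq-2rho1} with $(2\mu+\lambda)\times$\cref{eq-2rho2} so that the bad term $(2\mu+\lambda)\int\partial_{ij}\phi\,\partial_{ij}\operatorname{div}\varphi\,dx$ cancels), combined with the velocity/temperature dissipation (your $\mathcal{A}_{k}$, the paper's multiplication by $-\Delta\partial_{j}\varphi$, $-\Delta\partial_{j}\zeta$), and both feed in the already-established decay from \cref{lem-zeromode-decay,lem-non-zeromode-decay} together with \cref{sec-6-1,sec-6-2,sec-6-3} to make the sources time-integrable of size $C\bar{\delta}$. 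The only cosmetic differences are that the paper treats the second- and third-order levels sequentially rather than via a single index $k$, and obtains the $\|\nabla^{k+1}(\varphi,\zeta)\|_{L^{2}}^{2}$ dissipation by the $-\Delta$-multiplier rather than the equivalent symmetrizer multiplication you describe; also note that the correct continuity multiplier is $R\theta\rho^{-1}\nabla^{k}\phi$ (as in \cref{eqs210}), not $R\theta\rho^{-2}\nabla^{k}\phi$, though this does not affect the argument.
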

		
		\begin{proof} We have divided the proof into three parts.
			\begin{flushleft}
				\textbf{Step 1. Estimates on $\|\nabla^2\phi\|_{L^2}$.}
			\end{flushleft}
			Next, multiplying the equation $\cref{eq-2or}_{2}$ by $ \nabla\partial_{j} \phi$ and integrating the result on $\Omega$  leads to
			\begin{align}\label{eq-2rho1}
				\begin{aligned}
					&\frac{\mathrm{d}}{\mathrm{d} t} \int_{\Omega} \rho \partial_{j}  \varphi_{i} \partial_{j} \partial_{i} \phi \mathrm{d} x+\int_{\Omega}  R \T\left|\partial_{j} \partial_{i} \phi\right|^{2} \mathrm{d} x \\
					=&\int_{\Omega} \pt\rho \partial_{j}  \varphi_{i} \partial_{j} \partial_{i} \phi \mathrm{d} x +\int_{\Omega} \partial_{i}\big(\rho \partial_{j}  \varphi_{i} \big) \partial_{j} \big(\uf \cdot\nabla\phi+\rho div\varphi-\hat{R}_1\big) \mathrm{d}x \\
					&-\int_{\Omega}\left[ \rho\uf \cdot\nabla\p_{j}\varphi+R\rho\nabla\p_{j}\zeta-\p_{j}\hat{R}_2-\hat{Q}_2\right] \cdot \nabla\partial_{j} \phi \mathrm{d} x +(2 \mu +\lambda ) \int_{\Omega} \partial_{j} \partial_{i} \phi \partial_{j} \partial_{i} \operatorname{div}  \varphi \mathrm{d} x,
				\end{aligned}
			\end{align}
			where we have used the following two facts:
			$$
			\begin{aligned}
				&\int_{\Omega} \rho \pt \partial_{j} \varphi_{i} \partial_{ij} \phi \mathrm{d} x=\frac{\mathrm{d}}{\mathrm{d} t} \int_{\Omega} \rho \partial_{j}  \varphi_{i} \partial_{ij}  \phi \mathrm{d} x-\int_{\Omega} \pt\rho \partial_{j}  \varphi_{i} \partial_{j} \partial_{i} \phi \mathrm{d} x -\int_{\Omega} \rho \partial_{j}  \varphi_{i} \partial_{j} \partial_{i} \pt\phi \mathrm{d}x\\
				=&\frac{\mathrm{d}}{\mathrm{d} t} \int_{\Omega} \rho \partial_{j}  \varphi_{i} \partial_{j} \partial_{i} \phi \mathrm{d} x-\int_{\Omega} \pt\rho \partial_{j}  \varphi_{i} \partial_{j} \partial_{i} \phi \mathrm{d} x +\int_{\Omega} \partial_{j}\big(\rho \partial_{j}  \varphi_{i} \big) \partial_{i} \pt\phi \mathrm{d}x \\
				=&\frac{\mathrm{d}}{\mathrm{d} t} \int_{\Omega} \rho \partial_{j}  \varphi_{i} \partial_{j} \partial_{i} \phi \mathrm{d} x-\int_{\Omega} \pt\rho \partial_{j}  \varphi_{i} \partial_{j} \partial_{i} \phi \mathrm{d} x -\int_{\Omega} \partial_{j}\big(\rho \partial_{j}  \varphi_{i} \big) \partial_{i} \big(\uf \cdot\nabla\phi+\rho div\varphi-\hat{R}_1\big) \mathrm{d}x,
			\end{aligned}
			$$
			and
			$$
			\int_{\Omega} \left(\mu  \Delta \partial_{j}  \varphi_{i}+(\mu+\lambda)  \partial_{j} \partial_{i} \operatorname{div}  \varphi\right)  \partial_{j} \partial_{i} \phi \mathrm{d} x=(2 \mu +\lambda ) \int_{\Omega} \partial_{j} \partial_{i} \phi \partial_{j} \partial_{i} \operatorname{div}  \varphi \mathrm{d} x.
			$$
			Next, multiplying the equation \cref{eq-2or}$_{1}$ by $\frac{1}{\rho}  \partial_{j} \partial_{i} \phi$ and integrating with respect to $x$ yields
			\begin{align}\label{eq-2rho2}
				\begin{aligned}
					&\frac{\mathrm{d}}{\mathrm{d} t} \int_{\Omega} \frac{\left|\partial_{ij} \phi\right|^{2}}{2 \rho} \mathrm{d} x=- \int_{\Omega} \partial_{ij} \phi \partial_{ijk}\varphi_{k} \mathrm{d} x+\int_{\Omega} \left(\pt\big(\frac{1}{2\rho}\big)+\dv\big(\frac{u}{2\rho}\big)\right)\left|\partial_{ij} \phi\right|^{2} \mathrm{d} x +\int_{\Omega}(\p_{ij}\hat{R}_1+\hat{Q}_1)\frac{\p_{ij}\phi}{\rho} \mathrm{d} x.
				\end{aligned}
			\end{align}
			Finally, adding \cref{eq-2rho1} and \cref{eq-2rho2}$\times(2\mu+\lambda)$ up, summing $i, j$ from 1 to 3, integrating the resulting equation over $(0, t)$, and applying \cref{sec-6-1,sec-6-2,sec-6-3}, gives
			\begin{align}\label{step1}
				\begin{aligned}
					&\left(\int_\Omega \frac{2\mu+\lambda}{2\rho}|\nabla^2\phi|^2+\rho\nabla\varphi\cdot\nabla^2\phi dx\right)+\int_0^t\int_{\Omega}R\theta|\nabla^2\phi|^2dx d\tau\\
					\leq&C\int_0^t\int_{\Omega}|\nabla^2\phi|\left(|\nabla^2(\varphi,\zeta)|+|\nabla^2\hat{R}_1|+|\nabla\hat{R}_2|+|\hat{Q}_1|+|\hat{Q}_2|+O(\delta+\chi)\left(|\nabla^2\phi|+|\nabla\varphi|\right)\right)dx d\tau\\
					&\qquad+C\int_0^t\int_{\Omega}|(\nabla\rho||\nabla\varphi|+|\nabla^2\varphi|)(|\nabla \uf||\nabla\phi|+|\nabla\varphi|+\abs{\nabla^2\varphi}+\abs{\nabla\hat{R}_1})dx d\tau\\
					\leq&\frac{R}{120}\int_0^t\int_{\Omega}\theta\abs{\nabla^2\phi}^2dxd\tau+C\bar{\delta},
				\end{aligned}
			\end{align} 
			where estimates on the lower order terms have been used.

			\begin{flushleft}
				\textbf{Step 2. Estimates on $\|(\nabla^2\varphi,\nabla^2\zeta)\|_{L^2}$.}
			\end{flushleft}
			Next, We estimate $\nabla^2\varphi$ and $\nabla^2\zeta$. 
			Multiplying \cref{eq-2or}$_2$ by $\left(-\Delta \partial_{i} \varphi\right)$, \cref{eq-2or}$_3$ by $\left(-\Delta \partial_{i} \zeta\right)$ and using \cref{sec-6-1}-\cref{sec-6-3}, we have
			\begin{align}\label{step2}
				\begin{aligned}
					&\left.\int_{\Omega} \frac{\rho}{2}\left|\nabla^{2} (\varphi,\zeta)\right|^{2} \mathrm{d} x\right|_{\tau=0} ^{\tau=t}+\int_{0}^{t} \int_\Omega\left(\mu\left|\nabla^{3} \varphi\right|^{2}+(\mu +\lambda)\left|\nabla^{2} \operatorname{div} \varphi\right|^{2}+\kappa|\nabla^3\zeta|^2\right) \mathrm{d} x \mathrm{d} \tau \\
					\leq&\int_{0}^{t} \int_{\Omega}\left|\rho\uf \cdot\nabla\p_{j}\varphi+R(\T\nabla\p_{j}\phi+\rho\nabla\p_{j}\zeta)+\p_{j}\hat{R}_2+\hat{Q}_2\right|| \Delta \partial_{i}  \varphi|+\frac{\abs{\pt\rho}}{2}|\nabla^2\varphi|^2 +\abs{\nabla\rho}\abs{\p_t \nabla\varphi}\abs{\nabla^2 \varphi} \mathrm{d} x \mathrm{d} \tau\\
					&+\int_{0}^{t} \int_{\Omega}\left|\rho\uf \cdot\nabla\p_{j}\zeta+R\rho\T div\p_{j}\varphi+\p_{j}\hat{R}_3+\hat{Q}_3\right|| \Delta \partial_{i}  \zeta|+\frac{\abs{\pt\rho}}{2}|\nabla^2\zeta|^2 +\abs{\nabla\rho}\abs{\p_t \nabla\zeta}\abs{\nabla^2 \zeta} \mathrm{d} x \mathrm{d} \tau\\
					\leq&\frac{\min\{\mu,\kappa,\lambda+\mu\}}{120}\int_0^t\|\nabla^3(\varphi,\zeta)\|_{L^2}^2d\tau+C\bar{\delta}.
				\end{aligned}
			\end{align}
			\begin{flushleft}
				\textbf{{Step 3. Third Order Estimate.}}
			\end{flushleft}
			
			To close the {\it a priori} estimate, we need to estimate the third order derivatives. Applying $\p_k$, $k=1,2,3$ to \cref{eq-2or}$_1$ and $\p_i$, $i=1,2,3$ to \cref{eq-2or}$_2$ and \cref{eq-2or}$_3$, we have
			\begin{align}\label{eq-3or}
				\left\{\begin{aligned}
					&\pt\p_{ijk}\phi+\uf \cdot\nabla\p_{ijk}\phi+\rho div\p_{ijk}\varphi=\p_{ijk}\hat{R}_1+\p_k\hat{Q}_1+\hat{Q}^{*}_1,\\
					&\rho\pt\p_{ij}\varphi+\rho\uf \cdot\nabla\p_{ij}\varphi+R(\T\nabla\p_{ij}\phi+\rho\nabla\p_{ij}\zeta)=\mu \triangle\p_{ij}\varphi+(\mu+\lambda)  \nabla div\p_{ij}\varphi+\p_{ij}\hat{R}_2+\p_i\hat{Q}_2+\hat{Q}^{*}_2,\\
					&\rho\pt\p_{ij}\zeta+\rho\uf \cdot\nabla\p_{ij}\zeta+ R\rho\T div\p_{ij}\varphi=\kappa \triangle\p_{ij}\zeta+\p_{ij}\hat{R}_3+\p_i\hat{Q}_3+\hat{Q}^{*}_3,
				\end{aligned}
				\right.
			\end{align}
			where  
			\begin{align*}
				\begin{aligned}
					\hat{Q}^{*}_1:=&-\p_{k}\uf\cdot\nabla\p_{ij}\phi-\p_k\rho div\p_{ij}\varphi,\\
					\hat{Q}^{*}_2:=&-\p_{i}\rho\pt\p_j\varphi-\p_{i}\left(\rho \uf\right) \cdot\nabla\p_j\varphi+R(\p_i\T\nabla\p_j\phi+\p_i\rho\nabla\p_j\zeta),\\
					\hat{Q}^{*}_3:=&-\p_{i}\rho\pt\p_j\zeta-\p_{i}(\rho \uf) \cdot\nabla\p_j\zeta- R\p_{i}(\rho\T) div\p_j\varphi.
				\end{aligned}
			\end{align*}
			By the {\it a priori} assumptions \cref{apa}, we have
			\begin{align}\label{sec-6-4}
				\|\hat{Q}^{*}_i\|_{L^2}\leq C(\bar{\delta}+\chi)\|\nabla(\phi,\varphi,\zeta)\|_{H^{2}},\qquad i=1,2,3.
			\end{align}
			Firstly, we estimate $\|\nabla^3\phi\|_{L^2}^2$. Similarly, multiplying \cref{eq-3or}$_2$ by $\nabla\p_{ij}\phi$, one has
			\begin{align}\label{sec-6-5}
				&\frac{\mathrm{d}}{\mathrm{d} t} \int_{\Omega} \rho \partial_{ij}  \varphi_{k} \p_{ijk} \phi \mathrm{d} x+\int_{\Omega}  R \T\left|\partial_{ijk} \phi\right|^{2} \mathrm{d} x \notag\\
				=&\int_{\Omega} \pt\rho \partial_{ij}  \varphi_{k} \partial_{ijk} \phi \mathrm{d} x +\int_{\Omega} \partial_{k}\big(\rho \partial_{ij}  \varphi_{k} \big)  \big(\uf \cdot\nabla\p_{ij}\phi+\rho div\p_{ij}\varphi-\p_{ij}\hat{R}_1-\hat{Q}_1\big) \mathrm{d}x \notag \\
				&-\int_{\Omega}\left[ \rho\uf \cdot\nabla\p_{ij}\varphi+R\rho\nabla\p_{ij}\zeta-\p_{ij}\hat{R}_2-\p_{i}\hat{Q}_2-\hat{Q}^{*}_2\right] \cdot \nabla\partial_{ij} \phi \mathrm{d} x +(2 \mu +\lambda ) \int_{\Omega}  \p_{ijk}\phi \partial_{ijk} \operatorname{div}  \varphi \mathrm{d} x.
			\end{align}
			Next, multiplying \cref{eq-3or}$_1$ by $\frac{\p_{ijk}\phi}{\rho}$, it follows
			\begin{align}\label{sec-6-6}
				\frac{\mathrm{d}}{\mathrm{d} t} \int_{\Omega} \frac{\left|\partial_{ijk} \phi\right|^{2}}{2 \rho} \mathrm{d} x=&- \int_{\Omega} \partial_{ijk} \phi \partial_{ijk}\dv\varphi \mathrm{d} x\notag\\
				&+\int_{\Omega} \left(\pt\big(\frac{1}{2\rho}\big)+\dv\big(\frac{\uf}{2\rho}\big)\right)\left|\partial_{ijk} \phi\right|^{2} \mathrm{d} x +\int_{\Omega}(\p_{ijk}\hat{R}_1+\p_k\hat{Q}_1+\hat{Q}^{*}_1)\frac{\p_{ijk}\phi}{\rho} \mathrm{d} x.
			\end{align}
			Adding \cref{sec-6-5} and \cref{sec-6-6}$\times(2\mu+\lambda)$ up, summing $i, j$ from 1 to 3, integrating the resulting equation over $(0, t)$, and applying \cref{sec-6-1}-\cref{sec-6-3} and \cref{sec-6-4}, gives
			\begin{align}\label{step31}
				\begin{aligned}
					&\left(\int_\Omega \frac{2\mu+\lambda}{2\rho}|\nabla^3\phi|^2+\rho\nabla^2\varphi\cdot\nabla^3\phi dx\right)+\int_0^t\int_{\Omega}R\theta|\nabla^3\phi|^2dxd\tau\\
					\leq&\frac{R}{120}\int_0^t\int_{\Omega}\theta|\nabla^3\phi|^2dxd\tau+C \int_0^t\|\nabla^3(\varphi,\zeta)\|_{L^2}^2d\tau+C\bar{\delta}.
				\end{aligned}
			\end{align} 
			Finally, we estimate $\nabla^3\varphi$ and $\nabla^3 \zeta$. Using the same argument as \cref{step2},
			multiplying \cref{eq-3or}$_2$ by $-\Delta\p_{ij} \varphi$, \cref{eq-3or}$_3$ by $-\Delta\p_{ij}\zeta$, and using \cref{sec-6-1}-\cref{sec-6-3} and \cref{sec-6-4}, we have
			\begin{align}\label{step32}
				\begin{aligned}
					&\left.\int_{\Omega} \frac{\rho}{2}\left|\nabla^{3} (\varphi,\zeta)\right|^{2} \mathrm{d} x\right|_{\tau=0} ^{\tau=t}+\int_{0}^{t} \int_{\Omega}\left(\mu\left|\nabla^{4} \varphi\right|^{2}+(\mu +\lambda)\left|\nabla^{3} \operatorname{div} \varphi\right|^{2}+\kappa|\nabla^4\zeta|^2\right) \mathrm{d} x \mathrm{d} \tau \\
					\leq&\frac{\min\{\mu,\kappa,\lambda+\mu\}}{120}\|\nabla^4(\varphi,\zeta)\|_{L^2}^2+C\bar{\delta}.
				\end{aligned}
			\end{align} 
			Combining \cref{step1,step2,step31,step32}, we have proved \cref{lem-original}.
		\end{proof}
		Combining \cref{lem-zeromode-decay}, \cref{lem-non-zeromode-decay} and \cref{lem-original}, we have proved \cref{Thm-ape}.
		\section{Proof of \cref{mt0}: the zero initial mass}
		
		The differences between  \cref{mt} and \cref{mt0} 
		lie in the estimates of zero modes. We show these estimates and omit similar ones.
		
		We start from \cref{eq-ecd} and $\Do$\cref{NS-ln} since there are no diffusion waves if the initial mass is zero. Using the same notations as \cref{pert-anti}, one has
		\begin{align*}
			\left\{\begin{array}{l}
				\pt\Phi+\px\Psi_{1}=0, \\ 
				\pt\Psi_{1}+(\gamma-1)\px W=\left(2\mu+\lambda\right)\px \left(\mathring{u}_{1}- \ub_{1}\right)+\mathcal{Q}_{21}^{\#}-Q_1, \\ 
				\pt\Psi_{i}=\mu \px\left(\mathring{u}_{i}- \ub_{i}\right)+\mathcal{Q}_{2i}^{\#},\quad i=2,3, \\ 
				\pt W+\gamma\thetab\px\Psi_1= \kappa \px\left(\mathring{\theta}-{\thetab}\right) + \mathcal{Q}_{3}^{\#}-Q_2,\end{array}\right.
		\end{align*}
		where
		\begin{align*}
			\begin{aligned}
				\mathcal{Q}_{21}^{\#}:=&\bigg(-\mr{\mathcal{P}}(U)+\mr{\mathcal{P}}(\bar{U})\bigg),\qquad
				\mathcal{Q}_{2i}^{\#}:=\bigg(-\D_0(\frac{{m}_{1} {m}_{i}}{{\rho}})+\D_0(\frac{{\mb}_{1} {\mb}_{i}}{{\rhob}})\bigg),\quad i=2,3,\\
				\mathcal{Q}_{3}^{\#}:=&\bigg(\gamma\tilde{\theta}\px\Psi_1-\mr{\mathcal{N}}_{31}(U)+\mr{\mathcal{N}}_{31}(\bar{U})\bigg).
			\end{aligned}
		\end{align*}
		Then, similar to \cref{transformation}, we have the following perturbation system
		\begin{align}\label{sys-Eu0}
			\left\{\begin{array}{l}
				\pt\bar{\Phi}+\thetab \px\bar{\Psi}_{1}=J^{\#}_1, \\
				\pt\bar{\Psi}_{1}+ (\gamma-1)\px\left(\bar{W}+\bar{\Phi}\right)
				=\frac{(2\mu+\lambda)}{\rhob}  \px^2\bar{\Psi}_{1}+J^{\#}_{21}+\mathcal{Q}^{\#}_{21}-Q_1, \\
				\pt\bar{\Psi}_{i}=\frac{\mu}{\rhob}  \px^2\bar{\Psi}_{i} 
				+J^{\#}_{2i}+\mathcal{Q}^{\#}_{2i}, i=2,3, \\
				\pt\bar{W}+ (\gamma-1)  {\thetab} \px\bar{\Psi}_{1}=\frac{\kappa}{\rhob}  \px^2\bar{W} 
				+J^{\#}_{3}+\mathcal{Q}^{\#}_{3}-Q_2,
			\end{array}\right.
		\end{align}
		where
		\begin{align*}
			J_1^{\#}&:=\pt{\thetab}\Phi-\thetab\mathcal{B}_{1r},\qquad J_{2i}^{\#}:=-\mathcal{B}_{id}+\left[\mu\px(\um_i-\bar{u}_i)-\frac{\mu}{\rhob}\px^2\bar{\Psi}_i\right]:=J_{2i}^{{\#}(1)}+J_{2i}^{{\#}(2)},\qquad i=2,3,\\
			J_{21}^{\#}&:=\left[-\mathcal{B}^{\#}_{1d}-(\gamma-1)\mathcal{B}_{wr}^{\#}\right]+\left[(2\mu+\lambda)\px(\mr{u}_1-\tilde{u}_1)-\frac{(2\mu+\lambda)}{\rhob}\px^2\bar{\Psi}_1\right]:=J_{21}^{{\#}(1)}+J_{21}^{{\#}(2)},\\
			J^{\#}_{3}&:=\left[-\mathcal{B}_{wd}^{\#}-\gamma\thetab\mathcal{B}_{1r}^{\#}\right]+\left[\kappa\px\left(\thetam-\thetab\right)-\frac{\kappa}{\tilde{\rho}}\px^2\bar{W}\right]:=J_{3}^{{\#}(1)}+J_{3}^{{\#}(2)},\\
			\mathcal{B}^{\#}_{ir}&:=\ub_{i} \px\Phi+ \px\ub_{i} \Phi,\quad \mathcal{B}_{wr}^{\#}:=\px\left(\mathbf{{\ubb}} \cdot {\bar\Psi}+\frac{|\mathbf{{\ubb}}|^{2}}{2} \Phi\right),\quad \mathcal{B}^{\#}_{id}:=- \ub_{i}\px\Psi_{1} + \pt\ub_{i} \Phi,\quad i=2,3,\\
			\mathcal{B}_{wd}^{\#}&:=-\frac{| \mathbf{\ub}|^{2}}{2}\px\Psi_{1}+ \bar{\uf} \cdot \pt\bar{\Psi}+\pt\bar{\uf}\cdot\bar{\Psi}+\pt\left( {\thetab}+\frac{| \mathbf{\ub}|^{2}}{2}\right) \Phi.
		\end{align*}
		We set the following {\it a priori} assumption
		\begin{align}\label{apa0}
			\left\{\begin{aligned}
				&\sup_{0\leq t\leq T}\Big\{\left\|(\Phi,\Psi, W)\right\|^2_{L^\infty}+\ln^{-1}(2+t)\left[(1+t)\norm{(\phi,\varphi,\zeta)}_{L^2}^2
				+(1+t)^{2}\norm{(\px\phi,\px\varphi,\px\zeta)}_{L^2}^2\right]+\left\|(\phi,\varphi,\zeta)\right\|_{H^3}^2\Big\}\leq \chi^2,\\
				&\sup_{0\leq t\leq T}e^{ct}\norm{(\phi_{\neq},\varphi_{\neq},\zeta_{\neq})}^2_{H^1}\le \varepsilon^2+\delta.
			\end{aligned}\right.
		\end{align} 
		Then we need to close the {\it a priori} assumptions. We prove the following {\it a priori} estimate.
		
		\begin{Prop}[{\it a priori} estimate]\label{Thm-ape-0}
			Assume that $(\phi,\varphi,\zeta)$ is the unique solution given in \cref{Thm-local} and satisfies the {\it a priori} assumptions \cref{apa0}. Then the following estimates hold
			\begin{align*}
				&\ln^{-1}(2+t)\left[\left\|({\Phi},{\Psi},{W})\right\|_{L^2}^2+(1+t)\left\|({\phi},{\varphi},{\zeta})\right\|_{L^2}^2+(1+t)^{2}\left\|\px({\phi},{\varphi},{\zeta})\right\|_{L^2}^2\right]+\sum_{i=2}^{3}\left\|\px^{i}({\phi},{\varphi},{\zeta})\right\|_{L^2}^2\leq C(\delta+\varepsilon^2),\\
				&\qquad\left\|(\phi_{\neq},\varphi_{\neq},\zeta_{\neq})\right\|_{H^1}^2\leq \varepsilon^2e^{-c(1+t)},
			\end{align*}
			where $C,c>0$ are the universal constants independent of any small parameters in this paper.
		\end{Prop}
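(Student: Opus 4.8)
The plan is to mirror the proof of \cref{Thm-ape}, since --- as the remark following \cref{mt0} indicates --- the only genuine changes occur in the zero-mode estimates; the non-zero-mode bounds (\cref{lem9,lem99,lem-non-zeromode-decay}) and the higher-order estimates in the original variables (\cref{lem-original}) carry over essentially verbatim, now with the entropy wave $(\rhob,\ub,\thetab)$ in place of the ansatz $(\rhot,\utb,\thetat)$, because the Poincar\'e inequality \cref{2025.6.08-1} on $\Torus^2$ and the structure of \cref{sys-pertur1} are unaffected by the zero-mass hypothesis. The first observation that drives everything is that the zero-mass condition forces $\bar\Theta_1=\cdots=\bar\Theta_5=0$, hence $\tilde U=\bar U$ and there are no diffusion waves at all: the mass-carrying, linear-in-$\varepsilon$ forcing disappears, the first integrated equation becomes the exact $\pt\Phi+\px\Psi_1=0$, and the only inhomogeneities are the $O(\delta)$ entropy-wave residuals $Q_1,Q_2$ (bounded, under the Gaussian weight $e^{-cx_1^2/(1+t)}$, by $C\delta(1+t)^{-1}$ and $C\delta(1+t)^{-3/2}$ respectively) together with quadratic nonlinear couplings; this is already why the final bound improves to $C(\delta+\varepsilon^2)$.

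For the zero mode I would start from the transformed system \cref{sys-Eu0}, apply the diagonalization $\tilde L,\tilde R,\Lambda$ of \cref{eq-ever} to $(\bar\Phi,\bar\Psi_1,\bar W)$ (which again satisfy both structural conditions in \cref{SC}), and differentiate to obtain $\pt\px^kB+\Lambda\px^{k+1}B=\tilde L A_2\tilde R\,\px^{k+2}B+\mathcal M_k$ as in \cref{equ-Bk}, where now $\mathcal M_k$ carries no $D_{-\alpha}$ diffusion-wave errors and every inhomogeneity sits under the single Gaussian weight of $\px\rhob$ or is a nonlinear term controlled as in \cref{mathcal-Q,mathcal-Z,J}. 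Then I would run the weighted energy estimate of Steps 1 and 2 in the proof of \cref{nzm}: multiply $\px^k$ of the diagonalized equations by $\big(v_1^{n}\px^kb_1,\px^kb_2,v_1^{-n}\px^kb_3\big)$ with $v_1=\rhob/\rho_+$ and $n\sim\delta^{-1/2}$, extracting simultaneously the intrinsic dissipation $G_k=C\delta^{-1/2}\int_\R\px\rhob\,(|\px^kb_1|^2+|\px^kb_3|^2)dx_1$ in the genuinely nonlinear fields and the parabolic dissipation $\tilde K_k$ on $(\bar\Psi_1,\bar W)$ from $A_4=\tilde L A_2\tilde R\ge0$, and supplementing with the weighted heat-kernel inequalities \cref{2025-10-4,2025-10-5} to control $\int_\R\tilde{\omega}_{-1/2}|\px^kb_i|^2dx_1$; the borderline time integral $\int_0^t(1+\tau)^{-1}d\tau=\ln(1+t)$ generated by this coupling is exactly the source of the $\ln^{1/2}(2+t)$ factor in \cref{mt-decay-1}.

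The one genuinely new input is the treatment of the $b_2$ component --- the linearly degenerate field, where $G_k$ provides no dissipation. Here I would invoke the cancellation \cref{key}, which shows that the slowly decaying forcing terms, precisely the ones that would otherwise obstruct the $(1+t)^{-3/4}$ rate, are absent from the $2$-field equation because of the algebraic action of $\tilde L$ on $A_3$; the residual $b_2$ contributions are then absorbed into $\tilde K_k$ and the weighted dissipation by means of the Poincar\'e-type inequality \cref{lem5}. With this in hand, the $\px^{k+1}\bar\Phi$ estimate (Step 3 of \cref{nzm}, driven by \cref{sys-Eu0}$_1$ and \cref{sys-Eu0}$_2$) and the $\bar\Psi_i$ estimate (Step 4) go through unchanged, yielding Lyapunov-type inequalities for $\sum_iE_i$ of the same shape as \cref{11-1,22-1,33-1} but with forcing generated only by $Q_1,Q_2$. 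Closing then amounts to a time-weighted Gronwall argument --- integrate the $k=0$ inequality as is, the $k=1$ inequality against the weight $(1+t)$, and the $k=2$ inequality against $(1+t)^2$ --- and since the zero-mass condition removes the $(1+t)^{1/2}$ growth of the anti-derivative energy seen in \cref{E0}, one obtains $\sum_iE_i\lesssim(\varepsilon^2+\delta)\ln(2+t)$, then $\sum_{i\ge1}E_i\lesssim(\varepsilon^2+\delta)(1+t)^{-1}\ln(2+t)$ and $E_2\lesssim(\varepsilon^2+\delta)(1+t)^{-2}\ln(2+t)$. Translating back through identities analogous to \cref{123456} gives the claimed $L^2$ bounds on $(\phim,\varphim,\zetam)$ and $\px(\phim,\varphim,\zetam)$, hence the $L^\infty$ rate $(1+t)^{-3/4}\ln^{1/2}(2+t)$ and the closing of \cref{apa0}. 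I expect the main obstacle to be precisely this $b_2$ estimate: verifying that \cref{key} together with \cref{lem5} genuinely suppresses the $(1+t)^{-1}$-type terms in the linearly degenerate field and allows the argument to close at the sharp level --- this is the one place where the zero-mass structure must be used in an essential rather than a cosmetic way.
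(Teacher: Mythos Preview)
Your proposal is essentially correct and follows the paper's own route: drop the diffusion waves, pass to the transformed system \cref{sys-Eu0}, diagonalize, run the weighted $v_1^{\pm n}$ energy estimate together with the auxiliary heat-kernel bounds, exploit the cancellation \cref{key} in the $2$-field, and close by the time-weighted Gr\"onwall cascade to get $E_0\lesssim(\varepsilon^2+\delta)\ln(2+t)$, $E_1\lesssim(\varepsilon^2+\delta)(1+t)^{-1}\ln(2+t)$, $E_2\lesssim(\varepsilon^2+\delta)(1+t)^{-2}\ln(2+t)$.

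One clarification: you slightly mis-locate the role of \cref{lem5}. It is not used to absorb ``residual $b_2$ contributions'' into $\tilde K_k$; the cancellation \cref{key} by itself already reduces the $b_2$ forcing to $Q_2=O(\delta(1+t)^{-3/2})$, which is harmless. What \cref{lem5} actually controls is the zeroth-order weighted integral $\int_\R\omega_{-1}(\bar\Phi^2+|\bar\Psi|^2+\bar W^2)\,dx_1$ that appears on the right of the $k=0$ energy inequality (from the cross terms $I_1^{\#}+I_2^{\#}$ and the $\mathcal{M}_0^{(2)}$ commutators). Consequently the $k=0$ inequality is \emph{not} ``of the same shape'' as \cref{11-1}: it reads
\[
\frac{d}{dt}\sum_{i=0}^2E_i+\sum_{i=0}^2(K_i+G_i)\le C\bar\eta\int_\R\omega_{-1}\big(\bar\Phi^2+|\bar\Psi|^2+\bar W^2\big)\,dx_1+C\delta(1+t)^{-1},
\]
and the point is that one may \emph{not} crudely bound the weighted integral by $(1+t)^{-1}E_0$ (that would lose the optimal rate); instead one integrates in time and invokes \cref{lem5} to trade it for $C+C\int_0^t\|\px V\|_{H^1}^2\,d\tau$, which is then absorbed by the accumulated dissipation. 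The $\ln(2+t)$ comes from the $C\delta(1+t)^{-1}$ forcing produced by $Q_1$ in the $b_1,b_3$ equations after Young's inequality against $G_0$, not from the heat-kernel coupling you mention. With this adjustment your sketch matches the paper's argument.
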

		
		By the same argument as \cref{mathcal-Q} and \cref{J}, one has
		\begin{Lem}
			Under the same assumptions as \cref{Thm-ape-0}, one has
			\begin{align*}
				&\abs{\px^k\mathcal{Q}^{\#}_{2i}}\leq C\left(\mathcal{T}^{{\#}(k)}+\mathcal{Z}^{(k)}\right),\quad k=0,1,2,\ \ i=1,2,3,\\
				&\abs{\px^k\mathcal{Q}_3^{{\#}}}\leq C\big(\mathcal{T}^{{\#}(k)}+\mathcal{T}^{{\#}(k+1)}+\mathcal{Z}^{(k)}+\mathcal{Z}^{(k+1)}\big),\quad k=0,1,\\
				&\abs{\px^{k}J^{\#}_1,\px^kJ^{{\#}(1)}_{2i}}\leq C\mathcal{D}^{{\#}(k)},\ \ k=0,1,2,\ \ i=1,2,3,\\
				&\abs{\px^{k}J_{2i}^{{\#}(2)},\px^{k}J_{3}^{{\#}(2)}}	\leq C\big(\mathcal{D}^{{\#}(k+1)}+\mathcal{T}^{{\#}(k+1)}+\mathcal{Z}^{(k+1)}\big),\ \ k=0,1,\ \ i=1,2,3,\\
				&\abs{\px^{k}J_{3}^{{\#}(1)}}	\leq C\big(\mathcal{D}^{{\#}(k)}+\mathcal{D}^{{\#}(k+1)}+\mathcal{T}^{{\#}(k+1)}+\mathcal{Z}^{(k+1)}\big),\qquad k=0,1.
			\end{align*} 
			Compared with \cref{useful-notation}, the definition of $V$ in $\mathcal{D}^{\#},\mathcal{T}^{\#}$ is $(\bar{\Phi},\bar{\Psi},\bar{W})^{t}$, and $D_{-\alpha}$ is changed to $\omega_{-\alpha}$.
		\end{Lem}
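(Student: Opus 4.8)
The plan is to repeat, almost verbatim, the bookkeeping of \cref{mathcal-Q} and \cref{J}, the one structural simplification being that the ansatz here is the pure viscous entropy wave $(\rhob,\ub,\thetab)$ rather than its coupling with the two diffusion waves, so that every traveling-Gaussian bound $D_{-\alpha}$ collapses to the single heat kernel $\omega_{-\alpha}$ (which moreover satisfies $\omega_{-\alpha}\le D_{-\alpha}$) and all error terms proportional to $\bar\Theta_1,\dots,\bar\Theta_5$ disappear. In particular, the term $C(\delta+\varepsilon)D_{-\frac{k+2}{2}}$ that appeared in the estimate of $\px^kJ_3^{(1)}$ in \cref{J}, which stemmed entirely from the diffusion-wave errors $\tilde{\mathbf H}_3$, is simply absent here; the only genuine source terms of the profile equation \cref{eq-ecd}, namely $Q_1,Q_2$ with $|Q_1|\le C\delta\,\omega_{-1}$ and $|Q_2|\le C\delta\,\omega_{-\frac32}$, have already been split off explicitly in \cref{sys-Eu0} and are not part of this lemma.

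For the $\mathcal Q^{\#}$-terms I would argue exactly as in the proof of \cref{mathcal-Q}: write $-\mr{\mathcal{P}}(U)+\mr{\mathcal{P}}(\bar U)=-\big(\mathcal{P}(\mr U)-\mathcal{P}(\bar U)\big)+\big(\mathcal{P}(\mr U)-\mr{\mathcal{P}}(U)\big)$, Taylor-expand the first difference about $\bar U$ so that the linear part carries the coefficient $\px\bar U=O(\delta)\,\omega_{-\frac12}$ and the remainder is $O(|\px V|^2)$, while the second difference, being the defect between the zero mode of a quadratic nonlinearity and that nonlinearity evaluated at the zero mode, is controlled by $|v_{\neq}|^2$; this produces the $\mathcal T^{\#(k)}+\mathcal Z^{(k)}$ bound, and the identical manipulation of $\D_0(m_1m_i/\rho)$ yields $\mathcal Q_{2i}^{\#}$. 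For $\mathcal Q_3^{\#}$ one repeats the argument with $\mathcal N_{31}$, whose factor $\frac{m_1}{\rho}\px(\frac{m_i}{\rho})$ carries one extra spatial derivative compared with $\mathcal P$, and this is precisely the origin of the additional $\mathcal T^{\#(k+1)}+\mathcal Z^{(k+1)}$.

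For the $J^{\#}$-terms I would imitate the proof of \cref{J}. The pieces $J_1^{\#}=\pt\thetab\,\Phi-\thetab\mathcal B_{1r}^{\#}$ and $J_{2i}^{\#(1)}=-\mathcal B_{id}^{\#}$ are linear in $V$ with coefficients $\pt\bar U,\px\bar U=O(\delta)\,\omega_{-\frac j2}$, hence bounded by $\mathcal D^{\#(k)}$. For the viscosity pieces $J_{2i}^{\#(2)},J_3^{\#(2)}$ the crucial point --- exactly as in \cref{lem-2-1-3} --- is that the transformation \cref{transformation} is designed so that $\kappa\big(\px\zetam-\px^2\bar W/\rhob\big)$ telescopes: after inserting the identities for $\Em/\rhom-\Eb/\rhob$ and $\mm_i^2/\rhom^2-\mb_i^2/\rhob^2$, what remains are genuinely lower-order quantities bounded respectively by $|v_{\neq}||\nabla v_{\neq}|$, by $(\delta+\varepsilon)\omega_{-1}|V|+(\delta+\varepsilon)\omega_{-\frac12}(|\px V|+|\px^2V|)$, and by $|\px^2V||\px V|$, i.e.\ by $\mathcal D^{\#(k+1)}+\mathcal T^{\#(k+1)}+\mathcal Z^{(k+1)}$. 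Finally $J_3^{\#(1)}=-\mathcal B_{wd}^{\#}-\gamma\thetab\mathcal B_{1r}^{\#}$ is handled by expanding $\mathcal B_{wd}^{\#}$ as in \cref{id-wd}, substituting $\pt\bar\Psi$ through the $\bar\Psi$-equation of \cref{sys-Eu0} itself, and then invoking the $\mathcal Q^{\#}$-bounds just established.

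I expect no real obstacle; the one place needing a little attention is the order-counting inside $\mathcal B_{wd}^{\#}$, since replacing $\pt\bar\Psi$ via \cref{sys-Eu0} reintroduces $\px^2\bar\Psi_1$ together with $\mathcal Q_{21}^{\#}+J_{21}^{\#}$, and one must check --- just as in \cref{J} --- that the $\omega_{-\frac12}$ prefactor absorbs the extra profile derivative so that the bound closes at the level $\mathcal D^{\#(k)}+\mathcal D^{\#(k+1)}+\mathcal T^{\#(k+1)}+\mathcal Z^{(k+1)}$ rather than degrading. Beyond this, the computations are identical to those of \cref{mathcal-Q,J}, and are therefore omitted.
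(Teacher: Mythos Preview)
Your proposal is correct and follows exactly the paper's own approach, which is simply to invoke the proofs of \cref{mathcal-Q} and \cref{J} verbatim with $(\rhot,\utb,\thetat)$ replaced by $(\rhob,\ubb,\thetab)$ and $D_{-\alpha}$ by $\omega_{-\alpha}$. One small caveat to your remark that ``$Q_1,Q_2$ have already been split off explicitly and are not part of this lemma'': when you substitute $\pt\bar\Psi_1$ via \cref{sys-Eu0}$_2$ inside $\mathcal B_{wd}^{\#}$, the equation reintroduces not only $\px^2\bar\Psi_1$ and $\mathcal Q_{21}^{\#}+J_{21}^{\#}$ (which you mention) but also the explicit $-Q_1$, producing a residual $\bar u_1 Q_1=O(\delta^2\omega_{-3/2})$ that is strictly speaking not dominated by $\mathcal D^{\#(k)}+\mathcal D^{\#(k+1)}+\mathcal T^{\#(k+1)}+\mathcal Z^{(k+1)}$ pointwise; however, this contribution is two orders smaller than the $(\delta+\varepsilon)D_{-1}$ term it replaces and is harmless in every subsequent energy estimate, so the paper (and you) may safely absorb it.
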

		
		By the same definitions as those in \cref{EZM}, we arrive at
		\begin{align}\label{eq-diaBz}
			\pt B+\Lambda \px B=\bar{L}A_2\bar{R}\px ^2B+2\bar{L}A_2\px \bar{R}\px B+\left[\left(\pt\bar{L}+\Lambda \px \bar{L}\right)R+\bar{L}A_2 \px ^2\bar{R}\right]B+\bar{L} A_3,
		\end{align}
		where
		\begin{align*}
			&\Lambda:=\text{diag}\{\bar{\lambda}_1,0,\bar{\lambda}_3\},\qquad\bar{\lambda}_1=-\sqrt{\gamma(\gamma-1)\thetab},\quad \lambda_2=0,\quad\bar{\lambda}_3=\sqrt{\gamma(\gamma-1)\thetab},\\
			& A_3:=(A_{31},A_{32},A_{33})^{t}
			=(J_1^{\#},J_{21}^{\#}+\mathcal{Q}_{21}^{\#}-Q_1,J_{3}^{\#}+\mathcal{Q}_3^{\#}-Q_2)^t,
		\end{align*}
		and the corresponding eigenvectors are
		\begin{align*}
			&\bar{L}:=\left(\begin{array}{ccc}
				\frac{1}{\sqrt{2\gamma}} & \frac{1}{\sqrt{2\gamma}}\frac{\bar{\lambda}_1}{\gamma-1} &\frac{1}{\sqrt{2\gamma}} \\
				\sqrt{\frac{\gamma-1}{\gamma}} & 0 & -\frac{1}{\sqrt{\gamma(\gamma-1)}}\\
				\frac{1}{\sqrt{2\gamma}} & -\frac{1}{\sqrt{2\gamma}}\frac{\bar{\lambda}_1}{\gamma-1} &\frac{1}{\sqrt{2\gamma}}
			\end{array}\right),\quad
			&\bar{R}:=\left(\begin{array}{ccc}
				\frac{1}{\sqrt{2\gamma}} & \sqrt{\frac{\gamma-1}{\gamma}} &\frac{1}{\sqrt{2\gamma}} \\
				\frac{1}{\sqrt{2\gamma}}\frac{\bar{\lambda}_1}{\thetab} & 0 & -\frac{1}{\sqrt{2\gamma}}\frac{\bar{\lambda}_1}{\thetab}\\
				\frac{\gamma-1}{\sqrt{2\gamma}} & -\sqrt{\frac{\gamma-1}{\gamma}} &\frac{\gamma-1}{\sqrt{2\gamma}}
			\end{array}\right).
		\end{align*}
		Direct calculations yield
		\begin{align}
			&\Big(\bar{L} A_3\Big)_2=\sqrt{\frac{\gamma-1}{\gamma}}J_1^{\#}-\frac{1}{\sqrt{\gamma(\gamma-1)}}(J_3^{\#}+\mathcal{Q}_3^{\#}-Q_2),\label{key}\\
			&\Big(\bar{L} A_3\Big)_i=\sqrt{\frac{1}{2\gamma}}J_1^{\#}+\sqrt{\frac{1}{2\gamma}}\frac{\bar{\lambda}_i}{\gamma-1}(J_{21}^{\#}+\mathcal{Q}_{21}^{\#}-Q_1)+\frac{1}{\sqrt{2\gamma}}(J_3^{\#}+\mathcal{Q}_3^{\#}-Q_2), \quad \text{for}\quad i=1,3.\label{key-111}
		\end{align}
		Then, the  error term in $(\bar{L}A_3)_2$ becomes $\frac{Q_2}{\sqrt{\gamma(\gamma-1)}}\lesssim  \abs{\px \rhob}^3+\abs{\px \rhob}\abs{\px^2 \rhob}$, which is the key observation for improving the decay rate. Using the same notation as \cref{11-1}-\cref{33-1}, we have
		\begin{Lem}
			Under the same assumptions as \cref{Thm-ape-0}, one has
			\begin{align}
				&\frac{d}{dt}\big(\sum_{i=0}^2E_i\big)+\sum_{i=0}^2(K_i+G_{i})\leq C\bar{\eta} \int_{\R} \omega_{-1} (\bar{\Phi}^2+\bar{\Psi}^2+\bar{W}^2)dx_1+C\delta(1+t)^{-1},\label{111}\\
				&\frac{d}{dt}\big(\sum_{i=1}^2{E}_i\big)+\sum_{i=1}^2(K_i+G_{i})\leq C\bar{\eta}(1+t)^{-1}\big(\sum_{i=1}^2{E}_i+G_0\big)+C\delta(1+t)^{-2}+C\bar{\eta}(1+t)^{-2}{E}_0,\label{222}\\
				&\frac{d}{dt}{E}_2+{K}_2+G_{2}\leq C\bar{\eta}\sum_{i=0}^2(1+t)^{-(3-i)}{E}_{i}+C\delta(1+t)^{-3}+C\bar{\eta}\big[(1+t)^{-2}G_0+(1+t)^{-1}G_1\big],\label{333}
			\end{align}
			where the definitions of $E_i$, $K_i$ and $G_i$ are similar to \cref{sec-n-4-t}-\cref{sec-n-4} and \cref{sec-n-3}.
		\end{Lem}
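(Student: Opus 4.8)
The plan is to mirror the proof of \cref{nzm} step by step, now working with the diagonalized system \cref{eq-diaBz} and the same weighted energy method: apply $\px^k$ ($k=0,1,2$) to \cref{eq-diaBz}, multiply by the weighted vector $\bar B^{(k)}=\big(v_1^{n}\px^k b_1,\ \px^k b_2,\ v_1^{-n}\px^k b_3\big)$ with $v_1=\rhob/\rho_+$ and $n=4[\delta^{-1/2}]+1$, and integrate over $\R$. The structural ingredients used in the non-zero-mass case carry over verbatim: the matrix $\bar L A_2\bar R$ has the same form as $A_4$ and satisfies the nonnegativity and lower bound in \cref{ds}, and the $v_1^{\pm n}$-weight produces, in the $b_1$- and $b_3$-slots only, the enhanced dissipation $G_k$ of \cref{sec-n-3-tt} up to an error of size $\bar\delta\,\omega_{-1}|\px^k b_i|^2$; combined with the equivalences \cref{sec-n-5} (with $\tilde V$ replaced by $(\bar\Phi,\bar\Psi_1,\bar W)$) this gives $\tfrac{d}{dt}\tilde E_k+\tilde K_k+G_k\le(\text{errors})$. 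The only bookkeeping change at this stage is that, since there is no diffusion wave, every off-axis heat kernel of the non-zero-mass analysis disappears: all the $D_{-\alpha}$'s in \cref{mathcal-Q,J} become the centred Gaussians $\omega_{-\alpha}$, and one has $|Q_1|\le C\delta\,\omega_{-1}$, $|Q_2|\le C\delta\,\omega_{-3/2}$.

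The heart of the argument is the estimate of the forcing $\int_\R\bar B^{(k)}\,\bar L A_3\,dx_1$, done slot by slot. For the $b_1$- and $b_3$-slots, \cref{key-111} expresses the forcing through $J_1^\#$, $J_{21}^\#+\mathcal Q_{21}^\#-Q_1$ and $J_3^\#+\mathcal Q_3^\#-Q_2$; pairing the $\omega_{-\alpha}$-type pieces (in particular those generated by $Q_1$ and by the transformation terms $\mathcal B^\#$) with $v_1^{\pm n}\px^k b_{1,3}$ and applying Cauchy--Schwarz against the measure $\delta^{-1/2}\px\rhob\,dx_1$, one absorbs them into $\tfrac12 G_k$ at the price of an absolute remainder $C\delta(1+t)^{-1-k}$ --- for this it is essential that the Gaussian exponent in the $\omega_{-\alpha}$'s be taken strictly between $c_0/2$ and $c_0$, which is precisely the role of the auxiliary weights $\tilde\omega_{-1/2}$ of \cref{errors-1} and their higher-order analogues. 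The genuinely new point is the $b_2$-slot, for which $G_k$ supplies \emph{no} dissipation at all: by the cancellation \cref{key} the $Q$-dependent part of the forcing there reduces to $\tfrac{Q_2}{\sqrt{\gamma(\gamma-1)}}=O\big(|\px\rhob|^3+|\px\rhob|\,|\px^2\rhob|\big)$, so that pairing with $\px^k b_2$ and a Young inequality leaves only $\int_\R\omega_{-1}|\px^k b_2|^2\,dx_1+C\delta(1+t)^{-1-k}$; re-expressing $b_2$ through $(\bar\Phi,\bar\Psi_1,\bar W)$, the first term is bounded by $C\bar{\eta}\int_\R\omega_{-1}(\bar\Phi^2+\bar\Psi^2+\bar W^2)\,dx_1$ and kept on the right-hand side of \cref{111}. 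The remaining pieces are routine: $\mathcal M_k^{(1)}$ with $\px^j\rhob$, $j\ge2$, is controlled via the weighted heat-kernel inequalities \cref{2025-10-4,2025-10-5}; the $\mathcal T^{\#(\cdot)}$ and $\mathcal Z^{(\cdot)}$ contributions by \cref{mathcal-Z} and the {\it a priori} assumptions \cref{apa0}; and $\norm{\px^{k+1}\bar\Phi}_{L^2}$ and $\bar\Psi_i$ ($i=2,3$) by repeating Steps~3--4 in the proof of \cref{nzm}, using the $\bar{(\cdot)}$-version of the transformation \cref{transformation} to move one derivative off $\Phi$. Summing over $k=0,1,2$ with weights $1$, $(1+t)$, $(1+t)^2$ and using \cref{sec-n-5} to pass from $\tilde E_k$ to $E_k$ and to reabsorb $K_i,G_i$ then yields \cref{111,222,333}.

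The step I expect to be the main obstacle is precisely the critical term $\int_\R\omega_{-1}(\bar\Phi^2+\bar\Psi^2+\bar W^2)\,dx_1$ surviving in \cref{111}. Unlike in the non-zero-mass case, it cannot be absorbed into the dissipation here, since $G_k$ only sees the $1$- and $3$-characteristic modes $b_1,b_3$ and never the $2$-characteristic combination $b_2$, so no dissipative quantity is available to control $\int\omega_{-1}|b_2|^2$ at this stage; carrying it forward is exactly what will make the later Poincar\'e-type inequality \cref{lem5} indispensable for closing the decay estimate of \cref{Thm-ape-0}. A secondary delicate point, already visible above, is ensuring that every $Q_1$-, $Q_2$- and $\mathcal B^\#$-generated $\omega_{-\alpha}$-term in the $b_1,b_3$-slots is absorbable into $G_k$ with a $(1+t)^{-1-k}$-type (rather than a slower $(1+t)^{-1/2-k}$-type) remainder, which is what forces the careful choice of the nested Gaussian exponents $\tilde c_0<c_0$.
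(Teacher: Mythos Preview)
Your proposal is essentially correct and follows the same approach as the paper's proof, which explicitly mirrors \cref{nzm} with the changes $D_{-\alpha}\to\omega_{-\alpha}$ and the key cancellation \cref{key} in the $b_2$-slot. One small slip: the three inequalities \cref{111,222,333} are obtained by summing the $k$-level estimates over $k\in\{0,1,2\}$, $k\in\{1,2\}$, and $k=2$ respectively, \emph{without} any time weights---the factors $(1+t)$, $(1+t)^2$ enter only afterwards, in the derivation of the decay rates \cref{b111,b110}; also, the weighted term $\int_\R\omega_{-1}(\px^k b_2)^2\,dx_1$ for $k\ge1$ is simply bounded by $(1+t)^{-1}E_k$ and absorbed into the right-hand sides of \cref{222,333}, so the surviving $\int_\R\omega_{-1}(\bar\Phi^2+\bar\Psi^2+\bar W^2)\,dx_1$ appears only at level $k=0$, as you say.
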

		\begin{proof}
			Applying $\px^k$, $k=0,1,2$ to \cref{eq-diaBz}, one has
			\begin{align}\notag
				\begin{aligned}
					\px^kB_t+\Lambda \px^{k+1}B=\bar{L} A_2 \bar{R} \px^{k+2}B+\mathcal{M}_k,
				\end{aligned}
			\end{align}
			where
			\begin{align}
				\begin{aligned}
					\mathcal{M}_k:=&\sum_{j=1}^{k}\bigg[-\px^j\Lambda\px^{k-j+1}B+\px^j(\bar{L}A_2\bar{R})\px^{k-j+2}B\bigg]+2\sum_{i=0}^{k}\px^i\big(\bar{L} A_2 \px\bar{R}\big) \px^{k-i+1}B\\
					&+\sum_{i=0}^{k}\bigg\{\px^i\big[\left(\bar{L}_t+\Lambda \px\bar{L}\right) \bar{R}+\bar{L} A_2 \bar{R}_{x_1 x_1}\big] \px^{k-i}B \bigg\}+\px^{k}\left(\bar{L} A_3\right)\notag\\
					\leq&C\sum_{j=1}^{k+1}\left(\abs{\px^j \rhob} \abs{\px^{k-j+1}b_1},0,\abs{\px^j\rhob} \abs{\px^{k-j+1}b_3}\right)^{t}
					+C\delta \sum_{j=1}^{k+2}\abs{\omega_{-\frac{j}{2}}\px^{k-j+2}B}+\px^{k}\left(\bar{L} A_3\right):=\sum_{i=1}^{3}\mathcal{M}_{k}^{(i)}.\notag
				\end{aligned}
			\end{align}
			Using the same method as \cref{sec-3-1}, one has
			\begin{align*}
				\begin{aligned}
					&\int_\R\left(\frac{v_1^n}{2} \px^kb_1^2+\frac{1}{2} \px^kb_2^2+\frac{v_1^{-n}}{2} \px^kb_3^2\right)_t+\bar{B}^{(k)}_{x_1} A_4 \px^{k+1}Bdx_1
					+\int_{\R} a_1\px^kb_1^2+ a_3\px^kb_3^2dx_1\\
					=&\int_{\R}\bar{B}^{(k)}\px A_{4}\px^{k}\px B+\bigg[\left(\frac{v_1^n}{2}\right)_t \abs{\px^kb_1}^2+\left(\frac{v_1^{-n}}{2}\right)_t \abs{\px^kb_3}^2\bigg]+\bar{B}^{(k)}\mathcal{M}_kdx_1:=I^{\#}_1+I^{\#}_2+I^{\#}_3.
				\end{aligned}
			\end{align*}
			Same as \cref{ds}-\cref{I12} and applying {\it a priori} assumptions \cref{apa0}, one has
			\begin{align*}
				&\int_{\R} \px \left(\bar{B}^{(0)}-B^t\right)A_4\px Bdx_1 \leq C(\chi+\delta^{\frac12})\int_{\R} \omega_{-1}\left(\bar{\Phi}^2+\abs{\bar{\Psi}}^2+\bar{W}^2 \right)dx_1+C(\chi+\delta^{\frac12})K_0,\\
				&\int_{\R} \px \left(\bar{B}^{(k)}-\px^{k}B^t\right)A_4\px^{k+1}Bdx_1 \leq C(\chi+\delta^{\frac12})(1+t)^{-1}E_k+C(\chi+\delta^{\frac12})K_k,\quad \text{for} \quad k\ge 1,\\ 
				&I_1^{\#}+I_2^{\#}\leq C(\chi+\delta^{\frac12})\int_{\R} \omega_{-1}\left(\bar{\Phi}^2+\abs{\bar{\Psi}}^2+\bar{W}^2 \right)dx_1+C(\chi+\delta^{\frac12})K_0, \quad \text{for} \quad k=0,\\ 
				&I_1^{\#}+I_2^{\#}\leq C(\chi+\delta^{\frac12})(1+t)^{-1}E_k+C(\chi+\delta^{\frac12})K_k,\quad \text{for} \quad k\ge 1.
			\end{align*}
			For $I_3^{\#}$, by \cref{key,key-111}, we only provide the estimation of the error term 
			$$\mathcal{M}_0^{(3e)}:=(-\sqrt{\frac{1}{2\gamma}} \frac{\bar{\lambda}_1}{\gamma-1}Q_1-\frac{1}{\sqrt{2\gamma}}Q_2,\;\frac{1}{\sqrt{\gamma(\gamma-1)}}Q_2,\;-\sqrt{\frac{1}{2\gamma}} \frac{\bar{\lambda}_3}{\gamma-1}Q_1-\frac{1}{\sqrt{2\gamma}}Q_2)$$ 
			in $\mathcal{M}_k^{(3)}$, 
			one can find that the decay rate of the error term in the second equation has been improved
			\begin{align*}
				&\sum_{i=1,3}\int_{\R} \px^k\left( \sqrt{\frac{1}{2\gamma}} \frac{\bar{\lambda}_i}{\gamma-1}Q_1+\frac{1}{\sqrt{2\gamma}}Q_2\right) \px^k b_i dx_1\leq C\int_{\R}\px \rhob\left[(\px^kb_1)^2+(\px^kb_3)^2\right]dx_1+C\delta(1+t)^{-k-1},\\
				&  \frac{1}{\sqrt{\gamma(\gamma-1)}}\int_{\R} \px^kQ_2\px^kb_2 dx_1\leq C\delta\int_{\R}\omega_{-1}(\px ^kb_2)^2dx_1+C\delta(1+t)^{-k-\frac{3}{2}}.
			\end{align*}
			And the calculation of the remaining terms is the same as that of the estimation of \cref{IL12}-\cref{2025-10-5}.  Then  we have
			\begin{align}\notag
				&\frac{d}{dt}\big(\sum_{i=0}^2E_i\big)+\sum_{i=0}^2(K_i+G_{i})\leq C\delta(1+t)^{-1}+C\etab\int_{\R}\omega_{-1}(\bar{\Phi}^2+\bar{\Psi}^2+\bar{W}^2)dx_1.
			\end{align}
			Thus, we prove \cref{111}. The proof of \cref{222,333} is similar to that of \cref{22-1,33-1}, so we omit it here.
		\end{proof}
		
		To obtain the optimal decay rate, one cannot simply bound $\int_{\R} \omega_{-1}(\bar{\Phi}^2+\abs{\bar{\Psi}}^2+\bar{W}^2)dx_1$ by $(1+t)^{-1}\int_{\R} (\bar{\Phi}^2+\abs{\bar{\Psi}}^2+\bar{W}^2)dx_1$ which is out of control. Instead,  we need to use the following elementary inequality that involves the heat kernel as a weight function.
		
		\begin{Lem}\cite{HLM}\label{heat0}
			For $0<T\le+\infty,$ assume that $h(x_1,t)$ satisfies 
			\begin{align}\notag
				h_{x_1}\in L^2(0,T;L^2(\mathbb R)), \ \ h_t\in L^2(0,T;H^{-1}(\mathbb R)).
			\end{align}
			Then the following estimate holds:
			\begin{align}\notag
				\begin{aligned}
					\int_0^T\int_{\mathbb R}h^2\omega_{-1}d{x_1}dt\le4\pi\|h(0)\|^2+4\pi c^{-1}\int_0^T\|h_{x_1}(x_1,t)\|^2dt+8c\int_0^T\langle{h_t,hg^2}\rangle_{H^{-1}\times H^1}dt,
				\end{aligned}
			\end{align}
			where $g=\int_{-\infty}^{x_1} \omega_{-\frac12}(y_1,t)dy_1$.
		\end{Lem}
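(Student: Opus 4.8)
The strategy is to test the weak time-derivative of $h$ against $hg^2$ and to exploit that $g$ is a primitive of a self-similar heat kernel; no structure of $h$ is used beyond the stated regularity. First I would record three facts about $g$. Since $g_{x_1}=\omega_{-\frac12}$ and $\int_{\mathbb R}\omega_{-\frac12}(y_1,t)\,dy_1=\sqrt{\pi/c}$ for every $t\ge 0$, the function $g$ is nondecreasing in $x_1$ with $0\le g\le\sqrt{\pi/c}$, so $\|g(t)\|_{L^\infty}^2\le\pi/c$. Next, $\omega_{-\frac12}(x_1,t)=(1+t)^{-1/2}e^{-cx_1^2/(1+t)}$ solves $\partial_t\omega_{-\frac12}=\frac1{4c}\partial_{x_1}^2\omega_{-\frac12}$; integrating this in $x_1$ starting from $-\infty$, where $\partial_{x_1}\omega_{-\frac12}$ decays exponentially, gives $\partial_t g=\frac1{4c}\partial_{x_1}^2 g$. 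Hence $\partial_t(g^2)=2gg_t=\frac1{2c}g\,g_{x_1x_1}=\frac1{4c}\partial_{x_1}^2(g^2)-\frac1{2c}g_{x_1}^2$.

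The core of the proof is then to differentiate $\int_{\mathbb R}h^2g^2\,dx_1$ in time, using the chain rule appropriate to a function with $L^2_tH^{-1}$ time-derivative together with the identity above:
\[
\frac{d}{dt}\int_{\mathbb R}h^2g^2\,dx_1=2\langle h_t,hg^2\rangle_{H^{-1}\times H^1}+\frac1{4c}\int_{\mathbb R}h^2\,\partial_{x_1}^2(g^2)\,dx_1-\frac1{2c}\int_{\mathbb R}h^2g_{x_1}^2\,dx_1.
\]
An integration by parts in $x_1$ rewrites $\int h^2\partial_{x_1}^2(g^2)=-4\int hh_{x_1}\,g\,g_{x_1}$ (the boundary terms at $x_1=\pm\infty$ vanishing thanks to the Gaussian decay of $g_{x_1}$). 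Isolating $\int h^2 g_{x_1}^2$, integrating over $[0,T]$, dropping the nonpositive endpoint term $-\int h(T)^2g(T)^2\,dx_1$, and bounding $\int h(0)^2g(0)^2\,dx_1\le\|g(0)\|_{L^\infty}^2\|h(0)\|^2\le\frac\pi c\|h(0)\|^2$, one reaches
\[
\frac1{2c}\int_0^T\!\!\int_{\mathbb R}h^2g_{x_1}^2\,dx_1\,dt\le\frac\pi c\|h(0)\|^2+2\int_0^T\langle h_t,hg^2\rangle\,dt+\frac1c\int_0^T\!\!\int_{\mathbb R}|h|\,|h_{x_1}|\,g\,g_{x_1}\,dx_1\,dt.
\]
For the last term I would use $g\le\sqrt{\pi/c}$ and Young's inequality in the form $\frac1c\sqrt{\pi/c}\,(|h|g_{x_1})\,|h_{x_1}|\le\frac1{4c}h^2g_{x_1}^2+\frac\pi{c^2}h_{x_1}^2$, so that the $h^2g_{x_1}^2$ piece is absorbed into the left-hand side; multiplying the result by $4c$ produces exactly the stated inequality, with $\omega_{-1}$ on the left understood as $g_{x_1}^2$ — the only discrepancy being the harmless relabeling $e^{-cx_1^2/(1+t)}\leftrightarrow e^{-2cx_1^2/(1+t)}$ of the Gaussian exponent, which is immaterial in the applications.

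The one step requiring genuine care, and the main obstacle, is the rigorous justification of the displayed time-differentiation identity and of the spatial integration by parts under only $h_{x_1}\in L^2(0,T;L^2)$, $h_t\in L^2(0,T;H^{-1})$ (together with $h\in C([0,T];L^2)$, which holds in every application and makes the endpoint terms finite). The standard route is to mollify $h$ in $t$ (and, if needed, in $x_1$), to establish the identity for the smooth approximants where everything is classical, and then to pass to the limit: the duality term is continuous under $h_t^\varepsilon\rightharpoonup h_t$ in $L^2_tH^{-1}$ against the fixed smooth field $hg^2$, and the remaining quadratic terms converge by strong convergence of $h^\varepsilon$ and $h^\varepsilon_{x_1}$ in $L^2$; the boundary contributions at $x_1=\pm\infty$ are eliminated by the exponential decay of $g_{x_1}$, or by a spatial cutoff followed by a limit. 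Once this approximation is in place the remainder is the short algebraic computation above — the self-similar heat identity for $g$ plus a single Young inequality.
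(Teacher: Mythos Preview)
The paper does not prove this lemma at all; it is simply quoted from \cite{HLM} with a citation. So there is no ``paper's own proof'' to compare against, and your write-up is in fact a reconstruction of the standard argument from \cite{HLM}.

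Your argument is correct and is exactly that standard proof: differentiate $\int_{\mathbb R}h^2g^2\,dx_1$ in time, use the heat-equation identity $\partial_t g=\tfrac{1}{4c}\partial_{x_1}^2 g$ (hence $\partial_t(g^2)=\tfrac{1}{4c}\partial_{x_1}^2(g^2)-\tfrac{1}{2c}g_{x_1}^2$), integrate by parts once in $x_1$, integrate in $t$, and close with a single Young inequality. I checked your constants and they reproduce the stated $4\pi$, $4\pi c^{-1}$, $8c$ exactly. Your remark that the left-hand side is literally $\int h^2 g_{x_1}^2$ rather than $\int h^2\omega_{-1}$ (the Gaussian exponent being $2c$ versus $c$) is accurate and is the usual convention in this literature: the constant $c$ in $\omega_{-\alpha}$ is generic and changes from line to line, so the discrepancy is harmless both here and in the application (Lemma~\ref{lem5}). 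Your discussion of the mollification step needed to justify the time-differentiation under only $h_t\in L^2_tH^{-1}$ is also the right way to make the formal computation rigorous.
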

		
		Then we give the following Poincar\'e type inequality, which plays an important role in our energy estimates. 
		\begin{Lem}\label{lem5}
			For $\omega_{-n}$ defined in \cref{errors}, and assume \cref{apa0} holds. Then there exists some positive constant $C$ depending on $\alpha$ such that 
			\begin{align}\notag
				\int_0^t\int_{\mathbb R}\left(\bar{\Phi}^2+\abs{\bar{\Psi}}^2+\bar{W}^2\right)\omega_{-1}dx_1d\tau\le C+C\int_0^t\|\px(\bar{\Phi},\bar{\Psi},\bar{W})\|_{H^1}^2d\tau.
			\end{align}
		\end{Lem}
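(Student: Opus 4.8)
The plan is to apply the weighted heat-kernel inequality of \cref{heat0} to each scalar component $h$ of the integrated perturbation $(\bar{\Phi},\bar{\Psi}_1,\bar{\Psi}_2,\bar{\Psi}_3,\bar{W})$ and to control the resulting duality term by means of the transformed system \cref{sys-Eu0}. For a fixed such $h$, \cref{heat0} gives
\[
\int_0^t\!\!\int_{\mathbb R}h^2\omega_{-1}\,dx_1 d\tau\le 4\pi\|h(0)\|_{L^2}^2+4\pi c^{-1}\int_0^t\|\px h\|_{L^2}^2 d\tau+8c\int_0^t\langle h_t,h g^2\rangle_{H^{-1}\times H^1}d\tau ,
\]
with $g=\int_{-\infty}^{x_1}\omega_{-\frac12}(y_1,\tau)\,dy_1$, so that $0\le g\le C$ and $\px g=\omega_{-\frac12}$. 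The first term is $\le C$ because $\|(\bar{\Phi}_0,\bar{\Psi}_0,\bar{W}_0)\|_{L^2}\lesssim\varepsilon$, and the second is already of the desired form; hence everything reduces to estimating $\int_0^t\langle h_t,h g^2\rangle\,d\tau$.

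To this end I would substitute $h_t$ from the corresponding equation in \cref{sys-Eu0}. Its right-hand side splits into a hyperbolic flux of the form $\px(\text{linear combination of }\bar{\Phi},\bar{\Psi}_1,\bar{W})$ with coefficients $\thetab$ (resp.\ $\gamma-1$); the viscous term $\frac{c_\ast}{\rhob}\px^2 h$ with $c_\ast\in\{2\mu+\lambda,\mu,\kappa\}$; and the lower-order sources $J_1^\#,J_{2i}^\#,J_3^\#,\mathcal{Q}_{2i}^\#,\mathcal{Q}_3^\#,Q_1,Q_2$. Against the test function $h g^2\in H^1$, the viscous term is integrated by parts, producing the favourable piece $-\int\frac{c_\ast}{\rhob}g^2|\px h|^2$ together with remainders that, using $\px g=\omega_{-\frac12}$ and $\|g\|_{L^\infty}\le C$, are bounded by $C\|\px h\|_{H^1}^2$. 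The hyperbolic flux term is likewise integrated by parts; the contributions from the $\bar{\Psi}_1$- and $\bar{W}$-equations must be treated jointly, after weighting the $\bar{\Psi}_1$-equation by the symmetrizing factor $\thetab$ so that the symmetric structure of the principal matrix (equivalently, the validity of the structural conditions \cref{SC} after the transformation analogous to \cref{transformation}) can be exploited. In this way the leading pieces collapse into total $x_1$-derivatives plus terms carrying a factor $\px\thetab$ or $\px\rhob$; since $|\px\thetab|+|\px\rhob|\lesssim\delta\,(1+t)^{-\frac12}e^{-c_0 x_1^2/(1+t)}$ and $\|g\|_{L^\infty}\le C$, these are absorbed into $C\etab\int_{\mathbb R}\omega_{-1}(\bar{\Phi}^2+|\bar{\Psi}|^2+\bar{W}^2)\,dx_1$ plus time-integrable remainders.

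For the source pieces I would invoke the pointwise bounds of the preceding lemma: each of $J_1^\#,J_{2i}^\#,J_3^\#,\mathcal{Q}_{2i}^\#,\mathcal{Q}_3^\#$ carries either a factor $\etab$ or a kernel decaying at least like $\omega_{-1}$, so its pairing against $h g^2$ is controlled by $C\etab\|\px(\bar{\Phi},\bar{\Psi},\bar{W})\|_{H^1}^2+C\etab\int_{\mathbb R}\omega_{-1}(\bar{\Phi}^2+|\bar{\Psi}|^2+\bar{W}^2)\,dx_1$ plus integrable terms; and $Q_1,Q_2$ satisfy $|Q_1|\lesssim\delta(1+t)^{-1}e^{-cx_1^2/(1+t)}$, $|Q_2|\lesssim\delta(1+t)^{-\frac32}e^{-cx_1^2/(1+t)}$, so their pairing against $h g^2$ is bounded by $C\delta(1+t)^{-\frac12}\|h\|_{L^\infty}$, which is admissible in the closing scheme; here one uses in particular the cancellation \cref{key}, which removes $Q_1$ entirely from the $2$-characteristic component $(\bar L A_3)_2$ and leaves only the much smaller $Q_2$. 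Summing over the five components $h$, choosing $\etab=\chi+\delta^{\frac12}$ small, and absorbing the resulting $\etab$-multiple of $\int_0^t\int_{\mathbb R}\omega_{-1}(\bar{\Phi}^2+|\bar{\Psi}|^2+\bar{W}^2)$ into the left-hand side, I obtain the claim with right-hand side $C+C\int_0^t\|\px(\bar{\Phi},\bar{\Psi},\bar{W})\|_{H^1}^2 d\tau$.

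The main obstacle is the reorganization of the hyperbolic flux terms after integration by parts: one must verify that, following symmetrization, no quadratic term of the form $\int_{\mathbb R}\omega_{-1}(\bar{\Phi}^2+|\bar{\Psi}|^2+\bar{W}^2)$ survives with an $O(1)$ coefficient — only $\etab$-small multiples of it, together with the manifestly non-positive viscous term $-\int\frac{c_\ast}{\rhob}g^2|\px h|^2$ — which is exactly the point where the structural conditions \cref{SC} and the transformation \cref{transformation} are essential.
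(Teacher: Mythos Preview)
Your plan has a genuine gap in the treatment of the hyperbolic flux. When you apply \cref{heat0} to a single component, say $h=\bar{\Phi}$, the duality term contains $-\int_{\R}\thetab\,\px\bar{\Psi}_1\,\bar{\Phi}\,g^2\,dx_1$. Integrating by parts produces, besides the commutator with $\px\thetab$, the pieces
\[
\int_{\R}\thetab\,g^2\,\bar{\Psi}_1\,\px\bar{\Phi}\,dx_1
\quad\text{and}\quad
2\int_{\R}\thetab\,g\,\omega_{-\frac12}\,\bar{\Psi}_1\,\bar{\Phi}\,dx_1 ,
\]
neither of which is small: the first is bounded only by $C\|\bar{\Psi}_1\|_{L^2}\|\px\bar{\Phi}\|_{L^2}$, whose time integral diverges, and the second carries the weight $\omega_{-1/2}=(1+t)^{1/2}\omega_{-1}$ with an $O(1)$ coefficient, so it cannot be absorbed into the left-hand side. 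Symmetrizing across the $(\bar{\Phi},\bar{\Psi}_1,\bar{W})$ block only turns the cross terms into a total $x_1$-derivative of $\bar{\Psi}_1(\bar{\Phi}+\bar{W})g^2$, and integrating \emph{that} still leaves $\int\bar{\Psi}_1(\bar{\Phi}+\bar{W})\,\px(g^2)\,dx_1$ with $\px(g^2)=2g\,\omega_{-1/2}$, i.e.\ the same $O(1)$ obstruction. The structural conditions do not help here because the weight $g^2$ is not constant.

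The paper circumvents this by splitting the $(\bar{\Phi},\bar{\Psi}_1,\bar{W})$ block into two parts handled by \emph{different} devices. First, \cref{heat0} is applied only to $h=\bar{W}-(\gamma-1)\bar{\Phi}$ (and to $\bar{\Psi}_2,\bar{\Psi}_3$), because for this particular combination the hyperbolic flux $\px\bar{\Psi}_1$ cancels exactly in $h_t$, so the duality term contains only viscous and source contributions. Second, the weighted control of $(\bar{W}+\bar{\Phi})^2$ and $\bar{\Psi}_1^2$ is obtained \emph{not} from \cref{heat0} but by testing the $\bar{\Psi}_1$-equation against $(\bar{W}+\bar{\Phi})f$ with $f=\int_{-\infty}^{x_1}\omega_{-1}\,dy_1$; since $\|f\|_{L^\infty}\lesssim(1+t)^{-1/2}$, the $Q_1$ contribution is time-integrable, and integrating by parts the term $(\gamma-1)\px(\bar{W}+\bar{\Phi})\cdot(\bar{W}+\bar{\Phi})f$ yields $\frac{\gamma-1}{2}\int(\bar{W}+\bar{\Phi})^2\omega_{-1}$ directly as a \emph{good-sign} term on the left, while back-substituting $(\bar{W}+\bar{\Phi})_t$ produces $-c\int\bar{\Psi}_1^2\omega_{-1}$ with the correct sign as well. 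Linear combination of these two estimates then controls $\bar{\Phi}^2$ and $\bar{W}^2$ separately. Your outline would be repaired by adopting this two-pronged strategy in place of the uniform application of \cref{heat0}.
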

		\begin{proof}
			Defining
			$$f(x_1,t)=\int_{-\infty}^{x_1}\omega_{-1}(y_1,t)dy_1,$$ one has
			\begin{align}\notag
				&\|f(\cdot,t)\|_{L^\infty}\le 2c^{-\frac{1}{2}}(1+t)^{-\frac{1}{2}},\ \ \ \|f_t(\cdot, t)\|_{L^\infty}\le4 c^{-\frac{1}{2}}(1+t)^{-\frac{3}{2}}.
			\end{align}
			Multiplying \cref{sys-Eu0}$_2\times(\bar{W}+\bar{\Phi})f$ and integrating the resulting equation over $\mathbb R$ leads to 
			\begin{align}\label{poin5}
				\begin{aligned}
					\frac{\gamma-1}{2}\int_{\mathbb R}(\bar{W}+\bar{\Phi})^2\omega_{-1}
					dx_1
					&=\int_{\mathbb R}\bar{\Psi}_{1t}(\bar{W}+\bar{\Phi})fdx_1+\int_{\mathbb R}(2\mu+\lambda)\p_1\bar{\Psi}_1\p_1\left\{(\bar{W}+\bar{\Phi})\frac{f}{\rhob}\right\}dx_1\\
					&\quad-\int_{\mathbb R}\left(J_{21}^{\#}+\mathcal{Q}_{21}^{\#}-Q_1\right)(\bar{W}+\bar{\Phi})fdx_1:=\sum_{i=1}^{3}I_{\omega}^{i}.
				\end{aligned}
			\end{align}
			We estimate $I_{\omega}^{i}$ as follows:
			\begin{align}\notag
				\begin{aligned}
					I_{\omega}^{1}&=\left(\int_{\mathbb R}\bar{\Psi}_1(\bar{W}+\bar{\Phi})fdx_1\right)_t-\int_{\mathbb R}\bar{\Psi}_1(\bar{W}+\bar{\Phi})_tfdx_1-\int_{\mathbb R}\bar{\Psi}_1(\bar{W}+\bar{\Phi})f_tdx_1:=I_{\omega}^{11}+I_{\omega}^{12}+I_{\omega}^{13}.
				\end{aligned}
			\end{align}
			Firstly, by {the {\it a priori} assumptions} \cref{apa0}, one has
			\begin{align}\notag
				I_{\omega}^{13}\leq C\int_{\R} \abs{f_t}\Big(\bar{\Phi}^2+\bar{\Psi}_1^2+\bar{W}^2\Big)dx_1\le C\etab (1+t)^{-\frac{5}{4}}.
			\end{align}
			By \cref{sys-Eu0}$_{1,4}$, we have
			\begin{align}\notag
				\Big(\bar{W}+\bar{\Phi}\Big)_t=-\gamma\bar{\theta}\px\bar{\Psi}_1+\frac{\kappa}{\rhob}\px^2\bar{W}+J_1^{\#}+J^{\#}_3+\mathcal{Q}^{\#}_3-Q_2.
			\end{align}
			Consequently
			\begin{align}\notag
				\begin{aligned}
					I_\omega^{12}&=-\int_{\mathbb R}\frac{\kappa}{\rhob}\bar{\Psi}_1\px^2\bar{W}fdx_1+\frac{\gamma}{2}\int_{\mathbb R}\px\big(\bar{\Psi}_1^2\big)\bar{\T}fdx_1-\int_{\mathbb R}\Big(J_1^{\#}+J^{\#}_3+\mathcal{Q}^{\#}_3-Q_2\Big)\bar{\Psi}_1fdx_1\\
					:=&J_\omega^{121}+J_\omega^{122}+J_\omega^{123},
				\end{aligned}
			\end{align}
			and
			\begin{align}\notag
				J_\omega^{121}=&\int_{\mathbb R}\frac{\kappa}{\rhob}\px\bar{\Psi}_1\px\bar{W}fdx_1+\int_{\mathbb R}\bar{\Psi}_1\px\bar{W}\px\Big(f\frac{\kappa}{\rhob}\Big)dx_1\leq C\norm{\px\bar{\Psi}_1,\px\bar{W}}_{L^2}^2+\eta\int_{\R}\omega_{-1}\bar{\Psi}_1^2dx_1,
			\end{align}
			where $\eta$ is a sufficiently small constant to be determined later.
			The estimate on $I_{\omega}^{123}$ is similar to \cref{I62,I63,I64,I65}
			\begin{align}\label{sec-n-1}
				I_{\omega}^{123}\leq C\norm{\px(\bar{\Phi},\bar{\Psi},\bar{W})}_{H^1}^2+\eta\int_{\R}\omega_{-1}\Big(\bar{\Phi}^2+\abs{\bar{\Psi}}^2+\bar{W}^2\Big)dx_1+C\delta(1+t)^{-\frac32}.
			\end{align}
			And $I_{\omega}^{122}=-c \int_{\R} \omega_{-1} \Psi_1^2 dx_1$,
			then we arrive at
			\begin{align}\notag
				I_{\omega}^{12}\le C\|\px(\bar{\Phi},\bar{\Psi},\bar{W})\|_{H^1}^2
				+C\delta (1+t)^{-\frac{3}{2}}-c
				\int_{\mathbb R}\bar{\Psi}_1^2
				\omega_{-1}dx_1+C\eta\int_{\R}\omega_{-1}\Big(\bar{\Phi}^2+\bar{\Psi}_2^2+\bar{\Psi}_3^2+\bar{W}^2\Big)dx_1.
			\end{align}
			Also, we can estimate $I_{\omega}^3$ by the same line as \cref{sec-n-1}. As for $I_{\omega}^2$, one has
			\begin{align}\notag
				I_{\omega}^2\leq C\norm{\px(\bar{\Phi},\bar{\Psi},\bar{W})}_{L^2}^2+\eta\int_{\R}\omega_{-1}\Big(\bar{\Phi}^2+\bar{\Psi}_1^2+\bar{W}^2\Big)dx_1.
			\end{align}
			Finally, collecting \cref{poin5} and estimates on $I_{\omega}^{1},I_{\omega}^{2},I_{\omega}^{3}$, one has
			\begin{align}\label{sec-n-3-1}
				&\frac{\gamma-1}{2}\int_{\mathbb R}(\bar{W}+\bar{\Phi})^2\omega_{-1}
				dx_1+c
				\int_{\mathbb R}\bar{\Psi}_1^2
				\omega_{-1}dx_1-I_{\omega}^{11}  \notag\\
				\leq &C\|\px(\bar{\Phi},\bar{\Psi},\bar{W})\|_{H^1}^2+C\eta\int_{\R}\omega_{-1}\Big(\bar{\Phi}^2+\bar{\Psi}_2^2+\bar{\Psi}_3^2+\bar{W}^2\Big)dx_1
				+C(1+t)^{-\frac{5}{4}}.
			\end{align}
			
			Note that we still need to estimate $\int_{\R}(\bar{W}-\bar{\Phi})^2\omega_{-1} dx_1$, for which we will apply \cref{heat0}. Firstly, we use \cref{sys-Eu0} to obtain the cancellation as 
			\begin{align}\notag
				\Big(\bar{W}-(\gamma-1)\bar{\Phi}\Big)_t=\frac{\kappa}{\rhob}\px^2\bar{W}+J^{\#}_3+\mathcal{Q}^{\#}_3-Q_2-(\gamma-1)J_1^{\#}.
			\end{align}
			Next, to apply \cref{heat0}, we define $h=\bar{W}-(\gamma-1)\bar{\Phi}$ to have
			\begin{align}\label{2025-11-7-2}
				\langle{h_t,hg^2}\rangle_{H^{-1}\times H^1}=\int_{\R}\frac{\kappa}{\rhob}\px^2\bar{W}hg^2 dx_1+\int_{\R}\big(J_3^{\#}+\mathcal{Q}_3^{\#}-Q_2-(\gamma-1)J_1^{\#}\big)hg^2dx_1:=I_{h}^{1}+I_{h}^2.
			\end{align}
			We only show the estimations of $I_{h}^1$, since $I_{h}^2$ is similar to \cref{sec-n-1}.

			Using an integration by part about $x_1$, we have
			\begin{align}\label{poin10}
				\begin{aligned}
					\int_{\mathbb R}\frac{\kappa}{\rhob}\px^2\bar{W}hg^2dx_1&=-\int_{\mathbb R}\px\left(\frac{\kappa}{\rhob}\right)\px\bar{W}hg^2dx_1-\int_{\mathbb R}\frac{\kappa}{\rhob}\px\bar{W}\px hg^2dx_1-2\int_{\mathbb R}\frac{\kappa}{\rhob}\px\bar{W}hg\omega_{-\frac12} dx_1\\
					&\le C(\delta+\eta)\int_{\mathbb R}(\bar{\Phi}^2+\bar{W}^2)\omega_{-1}dx_1+C\|\px\bar{W}\|^2+C\|\px\bar{\Phi}\|^2.
				\end{aligned}
			\end{align}
			By choosing a suitably small $\eta>0$, one has
			\begin{align}\label{poin13}
				\begin{aligned}
					\langle{h_t,hg^2}\rangle_{H^{-1}\times H^1}\le &C(\delta+\eta)\int_{\mathbb R}(\bar{\Phi}^2+\abs{\bar{\Psi}}^2+\bar{W}^2)\omega_{-1}dx_1+C\|\px(\bar{\Phi},\bar{\Psi},\bar{W})\|_{H^1}^2+C \delta(1+t)^{-\frac{3}{2}}.
				\end{aligned}
			\end{align}
			It follows from \cref{heat0} and \cref{poin13} that 
			\begin{align}\label{poin14}
				\begin{aligned}
					\int_0^t&\int_{\mathbb R}(\bar{W}-(\gamma-1)\bar{\Phi})^2\omega_{-1}dx_1d\tau\le C+C(\delta+\eta)\int_0^t\int_{\mathbb R}(\bar{\Phi}^2+\abs{\bar{\Psi}}^2+\bar{W}^2)\omega_{-1}dx_1+C\int_0^t\|\px(\bar{\Phi},\bar{\Psi},\bar{W})\|_{H^1}^2d\tau.
				\end{aligned}
			\end{align}
			By \cref{sys-Eu0}$_{3,4}$ and \cref{lem5}, using the same method as \cref{2025-11-7-2,poin10,poin13,poin14}, one has
			\begin{align}\label{poin15}
				\begin{aligned}
					\int_0^t&\int_{\mathbb R}(\bar{\Psi}_2+\bar{\Psi}_3)^2\omega_{-1}dx_1d\tau\le C+C(\delta+\eta)\int_0^t\int_{\mathbb R}(\bar{\Phi}^2+\bar{W}^2+\bar{\Psi}_1^2)\omega_{-1}dx_1+C\int_0^t\|\px(\bar{\Phi},\bar{\Psi},\bar{W})\|_{H^1}^2d\tau.
				\end{aligned}
			\end{align}
			Combining \cref{sec-n-3-1}, \cref{poin14} and \cref{poin15} and taking $\eta$ small enough, we have completed the proof of \cref{lem5}.
		\end{proof}
		
		{\bf The decay rate: proof of \cref{mt0}.}
		
		\begin{proof}
			Integrating \cref{111} with $[0,t]$ and employing \cref{lem5}, we have
			\begin{align}\notag
				\sum_{i=0}^2E_i +\int_0^t\sum_{i=0}^2(K_i+G_{i})d\tau\le C(\e^2+\delta)\ln(2+t).
			\end{align}
			Multiplying \cref{222} by $(1+t)$ and integrating the resulting inequality on $[0,t]$, one has
			\begin{align}\label{b111}
				\sum_{i=1}^2{E}_i\le C(\e^2+\bar\delta)(1+t)^{-1}
				\ln(2+t),\ \ \int_0^t(1+\tau)\sum_{i=1}^2(K_i+G_{i})d\tau\le C(\e^2+\delta)
				\ln(2+t).
			\end{align}
			Multiplying \cref{333} by $(1+t)^2$ and integrating the resulting inequality into $[0,t]$, one has
			\begin{align}\notag
				(1+t)^2{E}_2+\int_0^t(1+\tau)^2({K}_2+G_{2})d\tau
				\le C(\e^2+\delta)
				\ln(2+t).
			\end{align}
			It then follows
			\begin{align}\label{b110}
				{E}_2\le C(\e^2+\delta)(1+t)^{-2}\ln(2+t).
			\end{align}
			Then by \cref{b111} and \cref{b110}, it holds that
			\begin{align}\notag
				\|(\phim,\psim,\mathring{w})\|_{L^\infty}\le C(\e+\delta^{\frac{1}{2}})(1+t)^{-\frac{3}{4}}\ln^{\frac{1}{2}}(2+t).
			\end{align}
			The proof of the decay rate of the non-zero mode is the same as that of \cref{lem99} and \cref{lem-non-zeromode-decay}, so we omit it.
			Then the proof of \cref{Thm-ape-0} is completed.
		\end{proof}

		\medskip
		\noindent {\bf Acknowledgment:}\,
		The work of Feimin Huang was partially supported by National
Key R$\&$D Program of China No. 2021YFA1000800, and National Natural Sciences Foundation of China (NSFC) No. 12288201. The research of Renjun Duan was partially supported by the General Research Fund (Project No.~14303523) from RGC of Hong Kong and also by the grant from the National Natural Science Foundation of China (Project No.~12425109). Lingda Xu was supported by the Research Center for Nonlinear Analysis of the Hong Kong Polytechnic University.

		\medskip
		\noindent{\bf Data availability:} The manuscript contains no associated data.

		\medskip
		\noindent{\bf Conflict of Interest:} The authors declare that they have no conflict of interest.

		
	\end{document}